\let \eps \varepsilon
\newtheorem{theorem}{Theorem}
\newtheorem{lemma}[theorem]{Lemma}
\newtheorem{proposition}[theorem]{Proposition}
\newtheorem{remark}[theorem]{Remark}
\newtheorem{corollary}[theorem]{Corollary}
\let \eps=\varepsilon
\let \la=\lambda
\let \ka=\kappa
\let \om=\omega
\begin{document}
\title[Boltzmann equation]{Stability of background perturbation for
Boltzmann equation}
\author{Yu-Chu Lin}
\address{Yu-Chu Lin, Department of Mathematics, National Cheng Kung
University, Tainan, Taiwan}
\email{yuchu@mail.ncku.edu.tw }
\author{Haitao Wang}
\address{Haitao Wang, School of Mathematical Sciences, Institute of Natural
Sciences, MOE-LSC, IMA-Shanghai, Shanghai Jiao Tong University, Shanghai,
China}
\email{haitallica@sjtu.edu.cn}
\author{Kung-Chien Wu}
\address{Kung-Chien Wu, Department of Mathematics, National Cheng Kung
University, Tainan, Taiwan and National Center for Theoretical Sciences,
National Taiwan University, Taipei, Taiwan}
\email{kungchienwu@gmail.com}
\date{\today }
\thanks{2020 Mathematics Subject Classification: 35Q20; 82C40.}

\begin{abstract}
Consider the Boltzmann equation in the perturbation regime. Since the
macroscopic quantities in the background global Maxwellian are obtained
through measurements, there are typically some errors involved. This paper
investigates the effect of background variations on the solution for a given
initial perturbation. Our findings demonstrate that the solution changes
continuously with variations in the background and provide a sharp time decay estimate of the associated errors. The proof relies on refined estimates
for the linearized solution operator and a proper decomposition of the
nonlinear solution. %
%
\end{abstract}

\keywords{Boltzmann equation, Maxwellian states, Stability.}
\maketitle






\section{Introduction}

\subsection{The model}

The Boltzmann equation reads
\begin{equation}
\left \{
\begin{array}{l}
\partial _{t}F+\xi \cdot \nabla _{x}F=Q(F,F)\text{,} \\[4mm]
F(0,x,\xi )=F_{0}(x,\xi )\text{,}%
\end{array}%
\right. \quad (t,x,\xi )\in {\mathbb{R}}^{+}\times {\mathbb{R}}^{3}\times {%
\mathbb{R}}^{3}\text{,}  \label{bot.1.a}
\end{equation}%
where $F(t,x,\xi )$ is the velocity distribution function for the particles
at time $t>0$, position $x=(x_{1},x_{2},x_{3})\in {\mathbb{R}}^{3}$ and
microscopic velocity $\xi =(\xi _{1},\xi _{2},\xi _{3})\in {\mathbb{R}}^{3}$%
. The left-hand side of this equation models the transport of particles and
the operator on the right-hand side models the effect of collisions during
the transport,
\begin{equation*}
Q(F,G)=\frac{1}{2}\int_{{\mathbb{R}}^{3}\times S^{2}}|\xi -\xi _{\ast }|^{\gamma
}B(\vartheta )\left \{ F_{\ast }^{^{\prime }}G^{\prime }+G_{\ast }^{\prime
}F^{\prime }-F_{\ast }G-FG_{\ast }\right \} d\xi _{\ast }d\om \text{.}
\end{equation*}%
Here the usual convention, i.e., $F=F\left( t,x,\xi \right) $, $F_{\ast
}\left( t,x,\xi _{\ast }\right) $, $F^{\prime }=F(t,x,\xi )$ and $F_{\ast
}^{\prime }=F\left( t,x,\xi _{\ast }^{\prime }\right) $, is used; the
post-collisional velocities of particles satisfy
\begin{equation*}
\xi ^{\prime }=\xi -[(\xi -\xi _{\ast })\cdot \om]\om \text{,}\quad \xi
_{\ast }^{\prime }=\xi_{*} +[(\xi -\xi _{\ast })\cdot \om]\om \text{, }\om \in
S^{2}\text{.}
\end{equation*}%
Throughout this paper, we consider the Maxwellian molecules $\left( \gamma
=0\right) $ and hard potentials ($0<\gamma \leq 1$); and $B(\vartheta )$
satisfies the Grad cutoff assumption
\begin{equation*}
0<B(\vartheta )\leq C\left \vert \cos \vartheta \right \vert \text{,}
\end{equation*}%
for some constant $C>0$, and $\vartheta $ is defined by
\begin{equation*}
\cos \vartheta =\frac{|(\xi -\xi _{\ast })\cdot \om|}{|\xi -\xi _{\ast }|}%
\text{.}
\end{equation*}%
The global Maxwellian states $\mathcal{M}_{[\rho ,\mu ,T]}$, with $[\rho
,\mu ,T]$ constant,
\begin{equation*}
\mathcal{M}_{[\rho ,\mu ,T]}=\frac{\rho }{\left( 2\pi RT\right) ^{3/2}}e^{-%
\frac{\left \vert \xi -\mu \right \vert ^{2}}{2RT}}
\end{equation*}%
satisfy $Q(\mathcal{M}_{[\rho ,\mu ,T]},\mathcal{M}_{[\rho ,\mu ,T]})=0$ and
are steady solutions of the Boltzmann equation.

As is well-known, in the perturbative framework, the evolution of the
initial perturbation crucially depends on the background Maxwellian. For
example, transport coefficients derived from the Boltzmann equation, such as
viscosity, heat conductivity, and macroscopic Euler waves, are all
determined by the background Maxwellian (see \cite%
{[dilute],[LiuYu2],[Sone],[Uaki-Yang-2]} for more information).

Typically, macroscopic quantities in the global Maxwellian, such as density,
velocity, and temperature, are obtained through measurements, which can
involve errors. Therefore, it is natural to ask the following questions:

\begin{enumerate}
\item How does the solution change as the background varies for the same
initial perturbation?

\item Can we obtain a sharp estimate for the difference between solutions
associated with different background Maxwellians?
\end{enumerate}

In this paper, we aim to answer the aforementioned questions by studying the
stability of the solution of (\ref{bot.1.a}) under perturbations of the
background with respect to macroscopic quantities. Let us now formulate our
problem. Let $F^{a}$ and $F^{b}$ be solutions to the Boltzmann equation with
the same initial perturbation but for different global Maxwellians. That is,
\begin{equation*}
\left \{
\begin{array}{l}
\partial _{t}F^{a}+\xi \cdot \nabla _{x}F^{a}=Q\left( F^{a},F^{a}\right)
\text{,}%
\vspace {3mm}
\\
F^{a}\left( 0,x,\xi \right) =F_{in}^{a}\left( x,\xi \right) =\mathcal{M}%
_{a}\left( \xi \right) +F_{0}\text{,}%
\end{array}%
\right.
\end{equation*}%
and%
\begin{equation*}
\left \{
\begin{array}{l}
\partial _{t}F^{b}+\xi \cdot \nabla _{x}F^{b}=Q\left( F^{b},F^{b}\right)
\text{,}%
\vspace {3mm}
\\
F^{b}\left( 0,x,\xi \right) =F_{in}^{b}\left( x,\xi \right) =\mathcal{M}%
_{b}\left( \xi \right) +F_{0}\text{,}%
\end{array}%
\right.
\end{equation*}%
respectively, where $\mathcal{M}_{a}\left( \xi \right) $ and $\mathcal{M}%
_{b}\left( \xi \right) $ are two global Maxwellians
\begin{equation*}
\mathcal{M}_{a}\left( \xi \right) =\frac{\rho _{a}}{\left( 2\pi
RT_{a}\right) ^{3/2}}e^{-\frac{\left \vert \xi -\mu _{a}\right \vert ^{2}}{%
2RT_{a}}}\text{,}\quad \mathcal{M}_{b}\left( \xi \right) =\frac{\rho _{b}}{%
\left( 2\pi RT_{b}\right) ^{3/2}}e^{-\frac{\left \vert \xi -\mu
_{b}\right \vert ^{2}}{2RT_{b}}}\text{,}
\end{equation*}%
with $\lvert \rho _{a}-\rho _{b}\rvert +\left \vert \mu _{a}-\mu
_{b}\right \vert +\left \vert T_{a}-T_{b}\right \vert >0$.

In the perturbation regime, if we let $F^{a}=\mathcal{M}_{a}+\sqrt{\mathcal{M%
}_{a}}f^{a}$ and $F^{b}=\mathcal{M}_{b}+\sqrt{\mathcal{M}_{b}}f^{b}$, we can
obtain that the perturbation functions $f^{\alpha }$, where $\alpha =a$, $b$%
, satisfy the following equations:
\begin{equation}
\left \{
\begin{array}{l}
\partial _{t}f^{\alpha }+\xi \cdot \nabla _{x}f^{\alpha }=\mathcal{L}%
_{\alpha }f^{\alpha }+\Gamma _{\alpha }\left( f^{\alpha },f^{\alpha }\right)
\vspace {3mm}
\\
f^{\alpha }\left( 0,x,\xi \right) =\varepsilon f_{0}^{\alpha }\left( x,\xi
\right) \text{,}%
\end{array}%
\right.   \label{nonlinear}
\end{equation}%
where%
\begin{equation*}
\mathcal{L}_{\alpha }h=\frac{2}{\sqrt{\mathcal{M}_{\alpha }}} Q\left(
\mathcal{M}_{\alpha },\sqrt{\mathcal{M}_{\alpha }}h\right)  \text{,}%
\quad \Gamma _{\alpha }\left( h_{1},h_{2}\right) =\frac{1}{\sqrt{\mathcal{M}%
_{\alpha }}}Q\left( \sqrt{\mathcal{M}_{\alpha }}h_{1},\sqrt{\mathcal{M}%
_{\alpha }}h_{2}\right) \text{.}
\end{equation*}%
As the initial perturbations are the same, one has
\begin{equation}
\sqrt{\mathcal{M}_{a}}f_{0}^{a}=\sqrt{\mathcal{M}_{b}}f_{0}^{b}\text{.}
\label{init}
\end{equation}

Moreover, let $G=F^{b}-F^{a}$ and then it satisfies%
\begin{equation}
\left \{
\begin{array}{l}
\partial _{t}G+\xi \cdot \nabla _{x}G=Q\left( G,G\right) +2Q\left(
F^{a},G\right) \text{,}%
\vspace {3mm}
\\
G\left( 0,x,\xi \right) =\mathcal{M}_{b}\left( \xi \right) -\mathcal{M}%
_{a}\left( \xi \right) \text{.}%
\end{array}%
\right.   \label{bot-diff}
\end{equation}%
Fixing $\mathcal{M}_{a}$ as a reference background, one can consider $G=%
\mathcal{M}_{b}-\mathcal{M}_{a}+\sqrt{\mathcal{M}_{a}}g$ and so
\begin{equation}
g=\frac{\left( F^{b}-\mathcal{M}_{b}\right) -\left( F^{a}-\mathcal{M}%
_{a}\right) }{\sqrt{\mathcal{M}_{a}}}\text{.}  \label{g-rep}
\end{equation}%
Plugging this into (\ref{bot-diff}) gives%
\begin{equation}
\left \{ \begin{aligned} & \partial _{t}g+\xi \cdot \nabla
_{x}g=\mathcal{L}_{a}g+2\Gamma _{a}\left(
\frac{\sqrt{\mathcal{M}_{b}}}{\sqrt{\mathcal{M}_{a}}}f^{b},\frac{%
\mathcal{M}_{b}-\mathcal{M}_{a}}{\sqrt{\mathcal{M}_{a}}}\right) +2\Gamma
_{a}\left( f^{a},g\right) +\Gamma _{a}\left( g,g\right) \text{,} \\ &g\left(
0,x,\xi \right) =0. \end{aligned}\right.   \label{g-eqn1}
\end{equation}

Noting that $\sqrt{\mathcal{M}_{a}}g$ represents the difference between the
perturbation solutions, and the objective of this paper is to investigate
the quantitative behavior of $g$. Since the Boltzmann equation is invariant
under Galilean transforms, we may assume that one of the Maxwellians has
zero macroscopic velocity. Then, without loss of generality and through
appropriate normalization, the global Maxwellians $\mathcal{M}_{a}$ and $%
\mathcal{M}_{b}$ can be assumed as
\begin{equation}
\mathcal{M}_{a}=\mathcal{M}=\frac{1}{\left( 2\pi \right) ^{3/2}}\exp \left( -%
\frac{\left \vert \xi \right \vert ^{2}}{2}\right) \text{,}\; \mathcal{M}_{b}=%
\frac{\rho }{\left( 2\pi \lambda \right) ^{3/2}}\exp \left( -\frac{%
\left \vert \xi -\mu \right \vert ^{2}}{2\lambda }\right) \text{.}
\label{Maxwellian-assu1}
\end{equation}%
%
%
%
%
%
For simplicity, we hereafter denote $\mathcal{L}_{a}$ and $\Gamma _{a}$ by $%
\mathcal{L}$ and $\Gamma $.

There are extensive studies on the stability of Boltzmann equation in the
literature, with initial data either near the vacuum or near the same global
Maxwellian, see \cite{[Ha],[HaXiao]} and references therein. It is important
to note that our problem in equation \eqref{g-eqn1} presents a significantly
different setting than the prior research.

\subsection{Notations}

Let us define some notations used in this paper. We denote $\left \langle
\xi \right \rangle ^{s}=(1+|\xi |^{2})^{s/2}$, $s\in {\mathbb{R}}$. For the
microscopic variable $\xi $, we denote the Lebesgue spaces
\begin{equation*}
|g|_{L_{\xi }^{q}}=\Big(\int_{{\mathbb{R}}^{3}}|g|^{q}d\xi \Big)^{1/q}\text{
if }1\leq q<\infty \text{,}\quad \quad |g|_{L_{\xi }^{\infty }}=\sup_{\xi
\in {\mathbb{R}}^{3}}|g(\xi )|\text{,}
\end{equation*}%
and the weighted norms can be defined by
\begin{equation*}
|g|_{L_{\xi ,\beta }^{q}}=\Big(\int_{{\mathbb{R}}^{3}}\left \vert \left
\langle \xi \right \rangle ^{\beta }g\right \vert ^{q}d\xi \Big)^{1/q}\text{
if }1\leq q<\infty \text{,}\quad \quad |g|_{L_{\xi ,\beta }^{\infty
}}=\sup_{\xi \in {\mathbb{R}}^{3}}\left \vert \left \langle \xi \right
\rangle ^{\beta }g(\xi )\right \vert \text{,}
\end{equation*}%
and
\begin{equation*}
|g|_{L_{\xi }^{\infty }(m)}=\sup_{\xi \in {\mathbb{R}}^{3}}\left \{ |g(\xi
)|m(\xi)\right \} \text{,}
\end{equation*}%
where $\beta \in {\mathbb{R}}$ and $m$ is a weight function. The $L_{\xi
}^{2}$ inner product in ${\mathbb{R}}^{3}$ will be denoted by $\big<\cdot
,\cdot \big>_{\xi }$, i.e.,
\begin{equation*}
\left \langle f,g\right \rangle _{\xi }=\int f(\xi )\overline{g(\xi )}d\xi
\text{.}
\end{equation*}%
For the Boltzmann equation with cut-off potential, the natural norm in $\xi $
is $|\cdot |_{L_{\sigma }^{2}}$, which is defined as
\begin{equation*}
|g|_{L_{\sigma }^{2}}^{2}=\left|\left \langle \xi \right \rangle ^{\frac{%
\gamma }{2}}g\right|_{L_{\xi }^{2}}^{2}\text{.}
\end{equation*}%
For the space variable $x$, we have similar notations, namely,
\begin{equation*}
|g|_{L_{x}^{q}}=\left( \int_{{\mathbb{R}^{3}}}|g|^{q}dx\right) ^{1/q}\text{
if }1\leq q<\infty \text{,}\quad \quad |g|_{L_{x}^{\infty }}=\sup_{x\in {%
\mathbb{R}^{3}}}|g(x)|\text{.}
\end{equation*}%
Furthermore, we define the high order Sobolev norm: let $s\in {\mathbb{N}}$
and define
\begin{equation*}
\left \vert g\right \vert _{H_{\xi }^{s}}=\sum_{|\alpha |\leq s}\left \vert
\partial _{\xi }^{\alpha }g\right \vert _{L_{\xi }^{2}}\text{,\  \  \  \  \  \ }%
\left \vert g\right \vert _{H_{x}^{s}}=\sum_{|\alpha |\leq s}\left \vert
\partial _{x}^{\alpha }g\right \vert _{L_{x}^{2}}\text{,}
\end{equation*}%
where $\alpha $ is any multi-index with $|\alpha |\leq s$.

Finally, with $\mathcal{X}$ and $\mathcal{Y}$ being norm spaces, we define
\begin{equation*}
\left \Vert g\right \Vert _{\mathcal{XY}}=\left \vert \left \vert g\right
\vert _{\mathcal{Y}}\right \vert _{\mathcal{X}}\text{.}
\end{equation*}%
We also denote
\begin{equation*}
\Vert g\Vert _{L^{2}}=\Vert g\Vert _{L_{\xi }^{2}L_{x}^{2}}=\left( \int_{{%
\mathbb{R}^{3}}}|g|_{L_{x}^{2}}^{2}d\xi \right) ^{1/2}\text{.}
\end{equation*}


For simplicity of notations, hereafter, we abbreviate \textquotedblleft {\ $%
\leq C$} \textquotedblright \ to \textquotedblleft {\ $\lesssim $ }%
\textquotedblright , where $C$ is a positive constant depending only on
fixed numbers.

\subsection{Heuristics and a toy model}

Starting from \eqref{g-rep}, we can express $g$ as
\begin{equation*}
\frac{\sqrt{\mathcal{M}_{b}}}{\sqrt{\mathcal{M}_{a}}}f^{b}-f^{a}\text{,}
\end{equation*}%
where $f^{a}$ and $f^{b}$ are the solutions to \eqref{nonlinear}. Let $%
\mathbb{G}_{\alpha }^{t}$ denote the semi-group generated by the linearized
operator $-\xi \cdot \nabla _{x}+\mathcal{L}_{\alpha }$ for $\alpha =a$, $b$%
. Since we are considering the perturbative regime, it is reasonable to
expect that the behavior of $f^{\alpha }$ is mainly governed by the
linearized equation, i.e., $f^{\alpha }(t)\approx \mathbb{G}_{\alpha
}^{t}f^{\alpha }(0)$, this is because the decay of the nonlinear part is
faster than the linear part, and thus
\begin{equation*}
g\approx \frac{\sqrt{\mathcal{M}_{b}}}{\sqrt{\mathcal{M}_{a}}}\mathbb{G}%
_{b}^{t}f^{b}(0)-\mathbb{G}_{a}^{t}f^{a}(0)=\left( \frac{\sqrt{\mathcal{M}%
_{b}}}{\sqrt{\mathcal{M}_{a}}}\mathbb{G}_{b}^{t}\frac{\sqrt{\mathcal{M}_{a}}%
}{\sqrt{\mathcal{M}_{b}}}-\mathbb{G}_{a}^{t}\right) \varepsilon f_{0}^{a}%
\text{,}
\end{equation*}%
where the last equality is due to \eqref{init}.

This observation suggests to us that the behavior of $g$ should be expressed
in terms of the difference between the two solution operators for linearized
Boltzmann equations. Moreover, for the linearized Boltzmann equation, by the
Chapman-Enskog expansion, it is known that the large-time behavior of the
solution to the linearized equation is governed by the linear Navier-Stokes
equation, where the viscosity and heat conductivity are proportional to the
power (precisely, the power is $\frac{2-\gamma }{2}$) of the macroscopic
temperature (see \cite{[EP], [Kawashima-Matsumura-Nishida]} for more
details).

Inspired by the linear Navier-Stokes equation, we consider the possibly
simplest toy model, the heat equation. Given two sets of heat equations,
\begin{equation}
\left \{
\begin{array}{l}
\partial _{t}h^{\alpha }+\mu _{\alpha }\cdot \nabla _{x}h^{\alpha }=\lambda
^{\left( 2-\gamma \right) /2}\Delta h^{\alpha }%
\vspace {3mm}
\\
\left. h^{\alpha }\right \vert _{t=0}=h_{0}\text{, }\quad \alpha =a\text{, }b%
\text{,}%
\end{array}%
\right.   \label{eq:heat}
\end{equation}%
here, positive constants $\mu _{\alpha }$ and $\lambda _{\alpha }$ are used
to mimic the macroscopic velocity and temperature, respectively. Using
Galilean transformation and suitable scaling, we assume that
\begin{equation*}
\mu _{a}=0\text{,}\; \mu _{b}=\mu \text{,}\; \lambda _{a}=1\text{, }\lambda
_{b}=\lambda \text{.}
\end{equation*}%
To analyze the difference between $h^{a}$ and $h^{b}$, we can use an
explicit heat kernel representation to yield
\begin{equation*}
h^{b}(t,x)-h^{a}(t,x)=\int_{\mathbb{R}^{3}}\left[ \frac{1}{(4\pi \kappa
t)^{3/2}}e^{-\frac{\left \vert x-\mu t-y\right \vert ^{2}}{4\kappa t}}-\frac{1%
}{\left( 4\pi t\right) ^{3/2}}e^{-\frac{\left \vert x-y\right \vert ^{2}}{4t}}%
\right] h_{0}(y)dy\text{,}\quad \kappa =\lambda ^{\left( 2-\gamma \right) /2}%
\text{.}
\end{equation*}%
By calculating the difference between two heat kernels explicitly (see
Appendix \ref{sec:heat}), one obtains the sharp estimates for $h^{b}-h^{a}$:
\begin{equation}
\left \{ \begin{aligned} & \left \vert h^{b}-h^{a}\right \vert
_{L_{x}^{\infty }}\leq C(1+t)^{-1}\left \vert h_{0}\right \vert
_{L_{x}^{1}}\left( \left \vert \lambda -1\right \vert (1+t)^{-1/2}+\left
\vert \mu \right \vert \right) \text{,}\\ &\left \vert h^{b}-h^{a}\right
\vert _{L_{x}^{2}}\leq C(1+t)^{-1/4}\left \vert h_{0}\right \vert
_{L_{x}^{1}}\left( \left \vert \lambda -1\right \vert (1+t)^{-1/2}+\left
\vert \mu \right \vert \right) \text{,} \end{aligned}\right.
\label{eq:est-toy}
\end{equation}%
for $t\geq 1$.

It should be noted that this method heavily relies on the explicit
expressions of heat kernels. For Boltzmann equation, while there exist
constructions of Green's functions (see \cite{[LiuYu],[LiuYu1],[LiuYu2]}),
the expressions are not precise enough to obtain sharp estimates for the
difference between them.

Alternatively, for the toy model \eqref{eq:heat}, we set $h=h^{a}-h^{b}$ to
obtain
\begin{equation}
\left \{
\begin{array}{l}
\partial _{t}h=\Delta h+\mu \cdot \nabla _{x}h^{b}-\left( \lambda
^{\left( 2-\gamma \right) /2}-1\right) \Delta h^{b}%
\vspace {3mm}
\\
h\left( 0,x\right) =0\text{.}%
\end{array}%
\right.  \label{eq:heat-2}
\end{equation}%
One could still recover \eqref{eq:est-toy} by refining estimates for the
heat kernel and $h^{b}$. See Appendix \ref{sec:heat} for details.

Compared with \eqref{g-eqn1}, one may view \eqref{eq:heat-2} as a simplified
analogue of it. Therefore, it is natural to ask whether we can establish
similar estimates as \eqref{eq:est-toy} for the difference $g$. Our main
result provides an affirmative answer to this question.

\subsection{Main theorem and idea of proof}

In our main theorem, we assume the initial condition $f_{0}^{a}$ satisfies
$$(1+\left \vert \xi \right \vert ^{2})^{\beta /2}f_{0}^{a }\in L_{\xi
}^{\infty }\left( L_{x}^{1}\cap L_{x}^{\infty }\right)\,, \quad \beta >3/2+2\gamma\,,$$
 in order to ensure the existence of the solution of the Boltzmann equation $f^{a}$ in the space $L^{\infty}_{\xi,\beta}(L^{\infty}_{x}\cap L^{2}_{x})$ and to control the nonlinear part of $g$. Moreover,
in view of the assumption $\left( \ref{init}%
\right) $, when $(1+\left \vert \xi \right \vert ^{2})^{\beta /2}f_{0}^{a}\in
L_{\xi }^{\infty }\left( L_{x}^{1}\cap L_{x}^{\infty }\right) $ is assumed, $%
f_{0}^{b}$  cannot be arbitrary and is determined by
\begin{equation*}
(1+\left \vert \xi \right \vert ^{2})^{\beta /2}\frac{\sqrt{\mathcal{M}_{b}}}{%
\sqrt{\mathcal{M}_{a}}}f_{0}^{b}=(1+\left \vert \xi \right \vert ^{2})^{\beta
/2}f_{0}^{a}\in L_{\xi }^{\infty }\left( L_{x}^{1}\cap L_{x}^{\infty
}\right)
\end{equation*}%
where%
\begin{align*}
\frac{\sqrt{\mathcal{M}_{b}}}{\sqrt{\mathcal{M}_{a}}}&=\frac{\rho ^{1/2}}{%
\lambda ^{3/4}}\exp \left \{ \frac{|\xi -\mu |^{2}}{4}\left( 1-\frac{1}{%
\lambda }\right) +\frac{2(\xi -\mu )\cdot \mu +|\mu |^{2}}{4}\right \} \\
&=\frac{\rho ^{1/2}}{%
\lambda ^{3/4}}\exp \left \{ \frac{\la-1}{4\la}\left|\xi +\frac{\mu}{\la-1} \right|^{2} +\frac{|\mu|^{2}}{4(\la-1)}\right \} \text{,
}
\end{align*}%
hence we need to assume $\lambda >1$ later on. The assumption $\lambda<2$ comes from the estimate of the difference of two Maxwellians in Lemma \ref{Maxwellian-Esti}.

The main theorem of this paper is stated as follows:
\begin{theorem}
Let $1<\underline{\la}<\lambda <\overline{\lambda }<2$, $0<\rho <\overline{\rho }$,  $0<|\mu| <|\overline{\mu }|$.
Assume that $f^{a}$ and $f^{b}$ are solutions to (\ref{nonlinear})
corresponding to $\mathcal{M}_{\alpha }=\mathcal{M}_{a}$ and $\mathcal{M}%
_{\alpha }=\mathcal{M}_{b}$ defined as (\ref{Maxwellian-assu1}), with the
initial data $\left \langle \xi \right \rangle ^{\beta }f_{0}^{a}\in \left(
L_{x}^{1}\cap L_{x}^{\infty }\right) L_{\xi }^{\infty }$, $\beta
>3/2+2\gamma $, and $\varepsilon >0$ small.
Then
\begin{equation*}
\left \Vert \frac{\left( F^{b}-\mathcal{M}_{b}\right) -\left( F^{a}-\mathcal{M%
}_{a}\right) }{\sqrt{\mathcal{M}_{a}}}\right \Vert _{L_{\xi ,\beta }^{\infty
}L_{x}^{\infty }}\leq C\varepsilon (1+t)^{-1}\Bigl(\lvert \rho -1\rvert
+\left \vert \mu \right \vert +\left \vert \lambda -1\right \vert \Bigr)\text{,}
\end{equation*}%
and
\begin{equation*}
\left \Vert \frac{\left( F^{b}-\mathcal{M}_{b}\right) -\left( F^{a}-\mathcal{M%
}_{a}\right) }{\sqrt{\mathcal{M}_{a}}}\right \Vert _{L_{\xi ,\beta }^{\infty
}L_{x}^{2}}\leq C\varepsilon (1+t)^{-1/4}\Bigl(\lvert \rho -1\rvert
+\left \vert \mu \right \vert +\left \vert \lambda -1\right \vert \Bigr)\text{.}
\end{equation*}%
for some constant $C>0$ dependent of $\overline{\rho}$, $\underline{\la}$, $\overline{\lambda }$ and $|\overline{\mu}|$, but independent of time $t$.
\end{theorem}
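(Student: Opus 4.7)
The plan is to use the decomposition suggested by the heuristic: applying Duhamel's formula to each of $f^{a}$ and $f^{b}$ and exploiting the compatibility condition \eqref{init}, one writes
\begin{equation*}
g(t)=\underbrace{\left(\frac{\sqrt{\mathcal{M}_{b}}}{\sqrt{\mathcal{M}_{a}}}\mathbb{G}_{b}^{t}\frac{\sqrt{\mathcal{M}_{a}}}{\sqrt{\mathcal{M}_{b}}}-\mathbb{G}_{a}^{t}\right)\varepsilon f_{0}^{a}}_{=:g_{L}(t)}+g_{N}(t),
\end{equation*}
where $g_{N}$ collects the differences of the Duhamel integrals involving $\Gamma_{a}(f^{a},f^{a})$ and $\Gamma_{b}(f^{b},f^{b})$. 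The main theorem is then proved by establishing sharp bounds on $g_{L}$ with the prescribed rates and prefactor $(|\rho-1|+|\mu|+|\lambda-1|)$, while $g_{N}$ is shown to decay at a strictly faster rate so that it can be closed by a bootstrap argument on the norms $\|g(t)\|_{L^{\infty}_{\xi,\beta}L^{\infty}_{x}}$ and $\|g(t)\|_{L^{\infty}_{\xi,\beta}L^{2}_{x}}$.

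For the linear piece $g_{L}$, I would mimic the heat kernel comparison in Appendix \ref{sec:heat}, but at the level of linearized Boltzmann semigroups. Using a Chapman--Enskog type spectral decomposition of $\mathbb{G}_{\alpha}^{t}$ into a fluid (low-frequency) part and a microscopic remainder, the fluid part of $\mathbb{G}_{\alpha}^{t}$ is asymptotically a linear Navier--Stokes/heat semigroup with drift $\mu_{\alpha}$ and effective diffusivity $\lambda_{\alpha}^{(2-\gamma)/2}$, so the difference of the two fluid pieces is handled exactly by the toy-model estimate \eqref{eq:est-toy}. The microscopic remainders decay with extra time factors (and the conjugation $\sqrt{\mathcal{M}_{b}/\mathcal{M}_{a}}$ is close to the identity to order $|\rho-1|+|\mu|+|\lambda-1|$), so they contribute strictly better than the stated rates.

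For the nonlinear piece $g_{N}$, I would instead apply Duhamel's formula to the $g$-equation \eqref{g-eqn1}:
\begin{equation*}
g(t)=\int_{0}^{t}\mathbb{G}_{a}^{t-s}\!\left[\,2\Gamma_{a}\!\Big(\frac{\sqrt{\mathcal{M}_{b}}}{\sqrt{\mathcal{M}_{a}}}f^{b},\frac{\mathcal{M}_{b}-\mathcal{M}_{a}}{\sqrt{\mathcal{M}_{a}}}\Big)+2\Gamma_{a}(f^{a},g)+\Gamma_{a}(g,g)\,\right](s)\,ds,
\end{equation*}
and estimate the three source terms separately. The first source carries the explicit smallness $|\rho-1|+|\mu|+|\lambda-1|$ from $\mathcal{M}_{b}-\mathcal{M}_{a}$ (controlled by the difference-of-Maxwellians lemma referenced in the text) multiplied by the decay of $f^{b}$; combined with the $L^{p}_{x}$--$L^{q}_{x}$ smoothing of $\mathbb{G}_{a}^{t-s}$ through a wave-remainder/mixture lemma, this already produces the claimed time decay. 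The remaining $\Gamma_{a}(f^{a},g)$ and $\Gamma_{a}(g,g)$ are absorbed by smallness of $\varepsilon$ and a standard bootstrap, given the \emph{a priori} decay $\|f^{a}(t)\|_{L^{\infty}_{\xi,\beta}(L^{\infty}_{x}\cap L^{2}_{x})}\lesssim\varepsilon(1+t)^{-\sigma}$ assumed on $f^{a}$.

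The principal obstacle will be the sharp bound on $g_{L}$. Unlike the toy heat model, the conjugator $\sqrt{\mathcal{M}_{b}/\mathcal{M}_{a}}$ is a $\xi$-dependent Gaussian-type factor (growing polynomially in $\xi$ under the assumption $1<\lambda<2$), and it neither commutes with $\xi\cdot\nabla_{x}$ nor with $\mathcal{L}_{b}$. One must verify that $\sqrt{\mathcal{M}_{a}/\mathcal{M}_{b}}f_{0}^{a}$ still belongs to the right weighted $L^{1}_{x}\cap L^{\infty}_{x}$ space so that heat-kernel-type estimates on $\mathbb{G}_{b}^{t}$ apply, and more delicately, that after conjugating back by $\sqrt{\mathcal{M}_{b}/\mathcal{M}_{a}}$ the output lies in the correct $L^{\infty}_{\xi,\beta}$ class. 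Extracting the cancellation that produces the small prefactor $(|\rho-1|+|\mu|+|\lambda-1|)$, rather than an $O(1)$ bound, for the difference of two conjugated linearized Boltzmann semigroups is the key technical contribution, and it is there that the ``refined estimates for the linearized solution operator'' mentioned in the abstract will be needed.
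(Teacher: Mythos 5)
There is a genuine gap, and it sits exactly at the point your sketch leaves vague: the sharp $(1+t)^{-1}$ rate. Your fallback route for the first source term in the $g$-equation Duhamel formula --- pairing Lemma \ref{Maxwellian-Esti} with the known semigroup smoothing and the decay of $f^{b}$ from Theorem \ref{prop: nonlinear} --- does \emph{not} ``already produce the claimed time decay'': since $\Vert f^{b}(s)\Vert_{L^{\infty}_{\xi,\beta}L^{2}_{x}}\lesssim\varepsilon(1+s)^{-3/4}$ and the microscopic source under $\mathbb{G}^{t-s}$ gives at best $(1+t-s)^{-5/4}$, the convolution yields only $(1+t)^{-3/4}$ in $L^{\infty}_{x}$; this is precisely Remark \ref{rmk:slow} in the paper. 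Closing the gap from $(1+t)^{-3/4}$ to $(1+t)^{-1}$ is the whole content of Proposition \ref{Prop-X1}, and it requires decomposing $f^{b}$ into $\varepsilon\mathbb{G}_{b}^{\tau}f_{0}^{b}$ plus its nonlinear Duhamel part, further splitting the linear part into long and short waves, splitting $\mathbb{G}^{t-\tau}$ into $\mathbb{G}_{L;0}^{t-\tau}+\mathbb{G}_{L;\perp}^{t-\tau}+\mathbb{G}_{S}^{t-\tau}$, and invoking the new refined estimates of Propositions \ref{Prop-LS}--\ref{Prop-linear} and Corollaries \ref{lemm-long}--\ref{lemm-short} (in particular the $L^{p}_{x}$--$L^{q}_{x}$ bound on $\mathbb{G}_{L;0}$ with $p>3$, together with $\mathrm{P}_{0}\Gamma=0$). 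None of this machinery, nor any substitute for it, appears in your plan.

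Your primary route --- writing $g=g_{L}+g_{N}$ and estimating $g_{L}=\bigl(\tfrac{\sqrt{\mathcal{M}_{b}}}{\sqrt{\mathcal{M}_{a}}}\mathbb{G}_{b}^{t}\tfrac{\sqrt{\mathcal{M}_{a}}}{\sqrt{\mathcal{M}_{b}}}-\mathbb{G}_{a}^{t}\bigr)\varepsilon f_{0}^{a}$ by comparing the two fluid parts as in the toy model --- is the approach the paper explicitly rules out: the spectral data of the Boltzmann semigroup are known only up to remainders ($\lambda_{j}(\eta)=-ia_{j}|\eta|-A_{j}|\eta|^{2}+O(|\eta|^{3})$, $e_{j}(\eta)=E_{j}+O(|\eta|)$), and to extract the prefactor $\lvert\rho-1\rvert+\lvert\mu\rvert+\lvert\lambda-1\rvert$ from the \emph{difference} of two such semigroups you would need quantitative (Lipschitz-in-background) control of these remainders with sharp decay, which is exactly what the authors say the available Green's function constructions do not provide; you yourself defer this cancellation as ``the key technical contribution'' without an argument, so the central step is unproved in both of your routes. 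Two further problems: (i) in the $g_{L}+g_{N}$ splitting, bounding $g_{N}$ merely by a faster $\varepsilon^{2}$ rate is insufficient, because the theorem's bound carries the factor $\mathcal{B}=\lvert\rho-1\rvert+\lvert\mu\rvert+\lvert\lambda-1\rvert$ and $\varepsilon$, $\mathcal{B}$ are independent parameters; every term must carry $\mathcal{B}$, which the difference of the two nonlinear Duhamel integrals does not manifestly do (one would again have to compare $\mathbb{G}_{b}$ with $\mathbb{G}_{a}$ and $\Gamma_{b}$ with $\Gamma_{a}$) --- the paper's representation (\ref{X1X2}) is designed so that each source carries either $\mathcal{M}_{b}-\mathcal{M}_{a}$ or $g$ itself; (ii) the conjugator $\sqrt{\mathcal{M}_{b}/\mathcal{M}_{a}}$ grows like $e^{\frac{\lambda-1}{4\lambda}|\xi|^{2}}$, not polynomially, so ``close to the identity to order $\mathcal{B}$'' fails uniformly in $\xi$, and handling it requires the Gaussian-weight class (\ref{weight}) and the weighted semigroup estimates, as in the paper.
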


Some comments on the theorem are as follows:

\begin{remark}
We say the estimate is sharp in the sense that the decay rate of the
estimate is $(1+t)^{-1}$, which is the same as that of the toy model %
\eqref{eq:est-toy}. However, for the toy model, the decay rate for speed
variation is $(1+t)^{-1}$, while for variation of the diffusion coefficient
it is $(1+t)^{-3/2}$. In contrast, for the Boltzmann equation, we can only
obtain $(1+t)^{-1}$ for variations of all macroscopic quantities. So far, it
is unknown whether it is possible to obtain different decay rates for
different quantities.
\end{remark}

\begin{remark}
The result is established using the $L^\infty_x$ framework, and no Sobolev
regularity is required. Moreover, the error estimate is given in terms of
the $L^\infty_x$ norm, which appears to be more realistic in terms of
measurement.
\end{remark}

\begin{remark}
The theorem requires the strict inequality $1<\underline{\la}<\lambda <\overline{\lambda }<2$%
. However, it can also be proven for the case $\lambda =1$, provided that we
assume suitable exponential velocity weight on the initial data.
\end{remark}

We will now outline the main idea and strategy to prove our main result. For
the equation \eqref{g-eqn1}, we use the solution operator $\mathbb{G}^{t}$
for the linearized operator $-\xi \cdot \nabla _{x}+\mathcal{L}$ and
Duhamel's principle to represent $g$ as follows:
\begin{equation}
\begin{aligned} g(t) & = 2\int_{0}^{t} \mathbb{G}^{t-\tau}\Gamma _{a}\left(
\frac{\sqrt{\mathcal{M}_{b}}}{\sqrt{\mathcal{M}_{a}}}f^{b},\frac{\mathcal{M}
_{b}-\mathcal{M}_{a}}{\sqrt{\mathcal{M}_{a}}}\right)(\tau) d\tau \\ &\quad +
2\int_{0}^{t} \mathbb{G}^{t-\tau}\Gamma _{a}\left( f^{a},g\right)(\tau)d\tau +
\int_{0}^{t} \mathbb{G}^{t-\tau}\Gamma _{a}\left( g,g\right)(\tau) d\tau. \end{aligned}
\label{eq:duhamel}
\end{equation}%
Since the first term on the right hand side is expected to dominate the
behavior of $g$, our primary objective is to obtain a sharp estimate for it.
However, the decay estimates available for the linearized equation (as shown
in Theorem \ref{Thm for linear}) and $f^{b}$ (as seen in Theorem \ref{prop:
nonlinear}) can only yield a $(1+t)^{-3/4}$ decay rate for the $%
L_{x}^{\infty }$ norm (as indicated in Remark \ref{rmk:slow}). The challenge of this paper is to improve the decay of this part from $(1+t)^{-3/4}$ to $(1+t)^{-1}$. Therefore,
we need more precise estimates for both $\mathbb{G}^{t}$ and $f^{b}$.
Through spectral analysis, we decompose the operator $\mathbb{G}^{t}$ into
three parts:
\begin{equation*}
\mathbb{G}^{t}=\mathbb{G}_{L;0}^{t}+\mathbb{G}_{L;\perp }^{t}+\mathbb{G}%
_{S}^{t}\text{,}
\end{equation*}%
where $\mathbb{G}_{L;0}^{t}$ is the long wave fluid part, $\mathbb{G}%
_{L;\perp }^{t}$ is the long wave non-fluid part, and $\mathbb{G}_{S}^{t}$
is the short wave part. These three parts have different natures. Among
them, only $\mathbb{G}_{L;0}^{t}$ behaves like the heat kernel in the toy
model \eqref{eq:heat}. We exploit its space-time pointwise structure to
obtain $L_{x}^{p}-L_{x}^{q}$-type estimates. However, for $\mathbb{G}%
_{L;\perp }^{t}$ and $\mathbb{G}_{S}^{t}$, due to insufficient spectral
information, we need to work in the $\lVert \cdot \rVert _{L_{x}^{2}L_{\xi
}^{2}}$ setting, as shown in Proposition \ref{Prop-LS}.

Next, motivated by the Liu-Yu's Green's function approach \cite{[LiuYu]} and
bootstrap argument, we improve the estimate from $L_{x}^{2}L_{\xi }^{2}$ to $L_{\xi
,\beta }^{\infty }L_{x}^{r}$ and obtain a more precise estimate for the
semi-group than the classical results \cite%
{[dilute],[Uaki-Yang],[Uaki-Yang-2]}. Moreover, we apply the semi-group
estimate to the long and short wave parts of the linearized solution to
obtain decay rates, which reveal their different structures. These decay
estimates for the semi-group $\mathbb{G}^{t}$ (including Propositions \ref%
{Prop-LS} and \ref{Prop-linear}, Corollaries \ref{lemm-long} and \ref%
{lemm-short}) are themselves new.

However, even with the above refined estimates for the semi-group, the slow
decay of $f^{b}$ still poses a challenge when applying it to the first term
on the right hand side of \eqref{eq:duhamel}. To address this issue, we
decompose $f^{b}$ into linear and nonlinear parts:
\begin{equation*}
f^{b}=\varepsilon \mathbb{G}_{b}^{t}f_{0}^{b}+\int_{0}^{t}\mathbb{G}%
_{b}^{t-\tau }\Gamma _{b}\left( f^{b},f^{b}\right) d\tau \text{.}
\end{equation*}%
Here $\mathbb{G}_{b}^{t}$ is the semi-group generated by $-\xi \cdot \nabla
_{x}+\mathcal{L}_{b}$. The nonlinear part decays fast and is not
problematic. We decompose linear part further into long wave part $\mathbb{G}%
_{b,L}^{t}f_{0}^{b}$ and short wave parts $\mathbb{G}_{b,S}^{t}f_{0}^{b}$.
We carefully analyze each term in the following integral
\begin{equation*}
\int_{0}^{t}\Bigl(\mathbb{G}_{L;0}^{t-\tau }+\mathbb{G}_{L;\perp }^{t-\tau }+%
\mathbb{G}_{S}^{t-\tau }\Bigr)\Gamma _{a}\left( \frac{\sqrt{\mathcal{M}_{b}}%
}{\sqrt{\mathcal{M}_{a}}}\Bigl(\mathbb{G}_{b;L}^{\tau }f_{0}^{b}+\mathbb{G}%
_{b;S}^{\tau }f_{0}^{b}\Bigr),\frac{\mathcal{M}_{b}-\mathcal{M}_{a}}{\sqrt{%
\mathcal{M}_{a}}}\right) d\tau \text{.}
\end{equation*}%
By doing so, we eventually recover the sharp decay estimate. Notably, only
the term
\begin{equation*}
\mathbb{G}_{L;0}^{t-\tau }(\cdots )\mathbb{G}_{b;L}^{\tau }
\end{equation*}%
behaves similarly to the counterpart in the toy model. Once we have obtained
the sharp estimate, we propose an appropriate ansatz for $g$ and close the
nonlinear problem by an a priori estimate.

\subsection{Organization of the paper}

The rest of this paper is structured as follows: In Section \ref{pre}, we
begin by introducing some basic properties of the operators $\mathcal{L}$
and $\Gamma$. We then provide a review of previously established results
concerning decay estimates of semi-groups and perturbation problems. After
that, we develop new refined estimates for the semi-group. Section \ref%
{sec:proof-main} is dedicated to the proof of the main theorem, while
Appendix \ref{sec:heat} contains detailed estimates for the toy model.

\section{Results for the problem around $\mathcal{M}_{a}$}

\label{pre}

\subsection{Basic estimates for $\mathcal{L}$, $\Gamma$ and well-posedness
results}

\label{collision} It is well known that the null space of $\mathcal{L}$ is a
five-dimensional vector space with the orthonormal basis $\{ \chi
_{i}\}_{i=0}^{4}$, where
\begin{equation*}
\mathrm{Ker}(\mathcal{L})=\left \{ \chi _{0},\chi _{i},\chi _{4}\right \}
=\left \{ \mathcal{M}^{1/2},\  \xi _{i}\mathcal{M}^{1/2},\  \frac{1}{\sqrt{6}}%
(|\xi |^{2}-3)\mathcal{M}^{1/2},~i=1,2,3\right \} \text{.}
\end{equation*}%
Based on this property, we can introduce the macro-micro decomposition: let $%
\mathrm{P}_{0}$ be the orthogonal projection with respect to the $L_{\xi
}^{2}$ inner product onto $\mathrm{Ker}(\mathcal{L})$, and $\mathrm{P}%
_{1}\equiv \mathrm{Id}-\mathrm{P}_{0}$.

The collision operator $\mathcal{L}$ consists of a multiplicative operator $%
\nu (\xi )$ and an integral operator $K$:
\begin{equation*}
\mathcal{L}f=-\nu (\xi )f+Kf\, \text{,}
\end{equation*}%
where
\begin{equation*}
\nu (\xi )=\int_{\mathbb{R}^{3}\times S^{2}}B(\vartheta )|\xi -\xi _{\ast
}|^{\gamma }\mathcal{M}(\xi _{\ast })d\xi _{\ast }d\om \text{,}
\end{equation*}%
and
\begin{equation*}
Kf=-K_{1}f+K_{2}f  \label{defK}
\end{equation*}%
is defined as \cite{[Glassey], [Grad]}:
\begin{equation*}
K_{1}f:=\int_{\mathbb{R}^{3}\times S^{2}}B(\vartheta )|\xi -\xi _{\ast
}|^{\gamma }\mathcal{M}^{1/2}(\xi )\mathcal{M}^{1/2}(\xi _{\ast })f(\xi
_{\ast })d\xi _{\ast }d\om \text{,}
\end{equation*}%
\begin{align*}
K_{2}f& :=\int_{\mathbb{R}^{3}\times S^{2}}B(\vartheta )|\xi -\xi _{\ast
}|^{\gamma }\mathcal{M}^{1/2}(\xi _{\ast })\mathcal{M}^{1/2}(\xi ^{\prime
})f(\xi _{\ast }^{\prime })d\xi _{\ast }d\om \\
& \quad +\int_{\mathbb{R}^{3}\times S^{2}}B(\vartheta )|\xi -\xi _{\ast
}|^{\gamma }\mathcal{M}^{1/2}(\xi _{\ast })\mathcal{M}^{1/2}(\xi _{\ast
}^{\prime })f(\xi ^{\prime })d\xi _{\ast }d\om \text{.}
\end{align*}

To begin with, we present a number of basic properties and estimates of the
operators $\mathcal{L}$, $\nu (\xi )$ and $K$, which can be found in \cite%
{[Grad], [LinWangWu]}.

\begin{lemma}
\label{basic} Let $0\leq \gamma \leq 1$. For any $g\in L_{\sigma }^{2}$, we
have the coercivity of the linearized collision operator $\mathcal{L}$, that
is, there exists a positive constant $\nu _{0}$ such that
\begin{equation*}
\left \langle g,\mathcal{L}g\right \rangle _{\xi }\leq -\nu _{0}\left \vert
\mathrm{P}_{1}g\right \vert _{L_{\sigma }^{2}}^{2}\text{.}
\label{coercivity}
\end{equation*}%
For the multiplicative operator $\nu (\xi )$, there are positive constants $%
\nu _{0}$ and $\nu _{1}$ such that
\begin{equation*}
\nu _{0}\left \langle \xi \right \rangle ^{\gamma }\leq \nu (\xi )\leq \nu
_{1}\left \langle \xi \right \rangle ^{\gamma }\text{.}  \label{nu-gamma}
\end{equation*}%
For the integral operator $K$,
\begin{equation*}
Kf=-K_{1}f+K_{2}f=\int_{{\mathbb{R}}^{3}}-k_{1}(\xi ,\xi _{\ast })f(\xi
_{\ast })d\xi _{\ast }+\int_{{\mathbb{R}}^{3}}k_{2}(\xi ,\xi _{\ast })f(\xi
_{\ast })d\xi _{\ast }\text{,}
\end{equation*}%
the kernels $k_{1}(\xi ,\xi _{\ast })$ and $k_{2}(\xi ,\xi _{\ast })$
satisfy
\begin{equation*}
k_{1}(\xi ,\xi _{\ast })\lesssim |\xi -\xi _{\ast }|^{\gamma }\exp \left \{ -%
\frac{1}{4}\left( |\xi |^{2}+|\xi _{\ast }|^{2}\right) \right \} \text{,}
\end{equation*}%
and%
\begin{equation*}
k_{2}(\xi ,\xi _{\ast })=a\left( \xi ,\xi _{\ast },\kappa \right) \exp
\left( -\frac{(1-\kappa )}{8}\left[ \frac{\left( \left \vert \xi \right
\vert ^{2}-\left \vert \xi _{\ast }\right \vert ^{2}\right) ^{2}}{\left
\vert \xi -\xi _{\ast }\right \vert ^{2}}+\left \vert \xi -\xi _{\ast
}\right \vert ^{2}\right] \right) \text{,}
\end{equation*}%
for any $0<\kappa <1$, together with
\begin{equation*}
a(\xi ,\xi _{\ast },\kappa )\leq C_{\kappa }|\xi -\xi _{\ast }|^{-1}(1+|\xi
|+|\xi _{\ast }|)^{\gamma -1}\text{.}
\end{equation*}%
\end{lemma}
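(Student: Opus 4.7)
The lemma collects three classical estimates for the linearized collision operator, so the plan is to handle the three parts separately by standard techniques, citing the underlying references for items that follow directly from Grad's original calculations.

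\textbf{Coercivity of $\mathcal{L}$.} First I would record that $\mathcal{L}$ is self-adjoint on $L^{2}_{\xi}$ with $\mathrm{Ker}(\mathcal{L}) = \mathrm{span}\{\chi_{0},\chi_{1},\chi_{2},\chi_{3},\chi_{4}\}$, so that the macro-micro decomposition gives $\langle g,\mathcal{L}g\rangle_{\xi} = \langle \mathrm{P}_{1}g,\mathcal{L}\,\mathrm{P}_{1}g\rangle_{\xi}$. Writing $\mathcal{L} = -\nu(\xi) + K$ one obtains
\begin{equation*}
\langle g,\mathcal{L}g\rangle_{\xi} = -|\mathrm{P}_{1}g|^{2}_{L^{2}_{\sigma}} + \langle \mathrm{P}_{1}g, K\mathrm{P}_{1}g\rangle_{\xi},
\end{equation*}
after absorbing $\nu(\xi)$ into the $L^{2}_{\sigma}$ norm using the lower bound $\nu(\xi)\gtrsim \langle\xi\rangle^{\gamma}$ proved in the next step. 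The main work is Grad's spectral gap: $K$ is compact on $L^{2}_{\xi}$ (it has the kernel estimates derived below) so $\mathcal{L}$ has pure point spectrum in $(-\nu_{1},0]$ accumulating only at $-\infty$, and $0$ is an isolated simple eigenvalue of multiplicity five. Hence there exists $\nu_{0}>0$ with $\langle h,\mathcal{L}h\rangle_{\xi}\le -\nu_{0}|h|^{2}_{L^{2}_{\sigma}}$ for every $h\in (\mathrm{Ker}\,\mathcal{L})^{\perp}$; applying this with $h=\mathrm{P}_{1}g$ gives the stated inequality.

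\textbf{Bounds on $\nu(\xi)$.} This is a direct Gaussian calculation. Using the Grad cutoff $0<B(\vartheta)\le C|\cos\vartheta|$ the angular integration produces a bounded factor, leaving the scalar integral $\int_{\mathbb{R}^{3}}|\xi-\xi_{\ast}|^{\gamma}\mathcal{M}(\xi_{\ast})\,d\xi_{\ast}$. For the upper bound I would split the domain into $|\xi_{\ast}|\le|\xi|/2$ and $|\xi_{\ast}|>|\xi|/2$: on the first region $|\xi-\xi_{\ast}|^{\gamma}\le(3|\xi|/2)^{\gamma}\lesssim\langle\xi\rangle^{\gamma}$, while on the second the Gaussian factor $\mathcal{M}$ gains superexponential decay that absorbs $|\xi-\xi_{\ast}|^{\gamma}$. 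For the lower bound, restrict the integration to the unit ball around $\xi_{\ast}=0$ when $|\xi|\le 1$, which yields a positive constant, and for $|\xi|\ge 1$ restrict to the region $|\xi_{\ast}|\le 1$ so that $|\xi-\xi_{\ast}|^{\gamma}\ge (|\xi|-1)^{\gamma}\gtrsim \langle\xi\rangle^{\gamma}$, while $\int_{|\xi_{\ast}|\le 1}\mathcal{M}(\xi_{\ast})\,d\xi_{\ast}$ is a positive constant.

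\textbf{Pointwise estimates on $k_{1}$ and $k_{2}$.} The kernel $k_{1}(\xi,\xi_{\ast}) = \int_{S^{2}} B(\vartheta)|\xi-\xi_{\ast}|^{\gamma} \mathcal{M}^{1/2}(\xi)\mathcal{M}^{1/2}(\xi_{\ast})\,d\omega$ is immediate after computing the angular integral: the explicit factor $\exp\{-(|\xi|^{2}+|\xi_{\ast}|^{2})/4\}$ comes straight from $\mathcal{M}^{1/2}(\xi)\mathcal{M}^{1/2}(\xi_{\ast})$, and $\int_{S^{2}}B(\vartheta)\,d\omega$ is uniformly bounded under the Grad cutoff, yielding the claimed pointwise bound. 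The kernel $k_{2}$ is harder and is where the bulk of the technical work lies. I would follow Grad's Carleman-type change of variables: for fixed $\xi$, parameterize $\xi'$ over the plane through $\xi$ perpendicular to $\xi-\xi_{\ast}$ to write $k_{2}$ as a two-dimensional integral over a hyperplane, then express the resulting Gaussian exponent using the collisional identity $|\xi'|^{2}+|\xi_{\ast}'|^{2}=|\xi|^{2}+|\xi_{\ast}|^{2}$ in the form
\begin{equation*}
\tfrac14(|\xi|^{2}+|\xi_{\ast}|^{2}) + \tfrac14\bigl((|\xi|^{2}-|\xi_{\ast}|^{2})^{2}/|\xi-\xi_{\ast}|^{2} + |\xi-\xi_{\ast}|^{2}\bigr)\cdot\tfrac14,
\end{equation*}
which, after absorbing a fraction $\kappa\in(0,1)$ of the exponent into the amplitude $a(\xi,\xi_{\ast},\kappa)$, produces the stated representation. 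The amplitude bound $a\lesssim|\xi-\xi_{\ast}|^{-1}(1+|\xi|+|\xi_{\ast}|)^{\gamma-1}$ is the Caflisch refinement and follows by carefully tracking the Jacobian factor $|\xi-\xi_{\ast}|^{-1}$ from the Carleman representation together with the angular cutoff $B(\vartheta)\le C|\cos\vartheta|$ providing the extra $(1+|\xi|+|\xi_{\ast}|)^{\gamma-1}$ gain.

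The main obstacle is the kernel representation for $k_{2}$: deriving the explicit Gaussian form and simultaneously extracting a sharp amplitude bound requires careful bookkeeping of the change of variables and of the Grad cutoff. Since all three items appear in the cited references \cite{[Grad], [LinWangWu]}, I would present the coercivity and the $\nu(\xi)$ bounds in full and refer to those references for the detailed $k_{2}$ computation.
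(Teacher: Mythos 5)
The paper does not prove this lemma at all---it simply records these facts as classical and cites \cite{[Grad], [LinWangWu]}---and your sketch is precisely the standard argument from those references (Grad's compactness/spectral-gap argument for the coercivity, a direct Gaussian computation for the two-sided bound on $\nu(\xi)$, and the Grad--Caflisch Carleman-type computation for the kernels $k_{1}$, $k_{2}$), so it is consistent with the paper's treatment and correct in outline. The one step to state more carefully is the coercivity: the spectral gap by itself yields $\left\langle h,\mathcal{L}h\right\rangle _{\xi }\leq -\lambda _{0}\left\vert h\right\vert _{L_{\xi }^{2}}^{2}$ on $(\mathrm{Ker}\,\mathcal{L})^{\perp }$, and upgrading this to the weighted norm $\left\vert \cdot \right\vert _{L_{\sigma }^{2}}$ requires combining it with $\nu (\xi )\gtrsim \left\langle \xi \right\rangle ^{\gamma }$ and the boundedness of $K$ via a convex-combination argument (also note the discrete eigenvalues accumulate at the edge of the essential spectrum, not at $-\infty$), which you only gesture at and should spell out.
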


Furthermore, from Lemma \ref{basic} we have some essential properties for
the integral operator $K$.

\begin{lemma}
\label{basic2}Let $0\leq \gamma \leq 1$ and $\tau \in \mathbb{R}$. Then
\begin{equation}
|Kg|_{L_{\xi ,\tau +2-\gamma }^{q}}\lesssim |g|_{L_{\xi ,\tau }^{q}}\text{, }%
1\leq q\leq \infty \text{,}  \label{K-Lp}
\end{equation}%
and
\begin{equation*}
|Kg|_{L_{\xi ,\tau -\gamma +3/2}^{\infty }}\leq C|g|_{L_{\xi ,\tau }^{2}}%
\text{\thinspace .}
\end{equation*}%
Moreover,
\begin{equation}
|\varpi K\varpi ^{-1}g|_{L_{\xi ,\tau +2-\gamma }^{q}}\lesssim |g|_{L_{\xi
,\tau }^{q}}\text{, }1\leq q\leq \infty \text{,}  \label{kaK-Lp}
\end{equation}%
holds for any weight function of the form
\begin{equation}
\varpi (\xi )=\exp \left \{ \ka_{0}|\xi |^{2}+\ka \cdot \xi +\ka_{4}\right \}
\label{weight}
\end{equation}%
with $\ka_{0}$, $\ka$, $\ka_{4}$ constant and $\ka_{0}>0$.
\end{lemma}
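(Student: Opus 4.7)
The three estimates all reduce to pointwise kernel analysis of $k_1$ and $k_2$ combined with standard integral operator inequalities (Schur's test for the $L^q$--$L^q$ bounds and Cauchy--Schwarz for the mixed $L^2$--$L^\infty$ bound). The contribution of $K_1$ will be harmless in all three cases because its kernel has a full Gaussian $e^{-|\xi|^2/4}e^{-|\xi_*|^2/4}$, which dominates any polynomial gain in $\langle\xi\rangle$ and any exponential weight $\varpi$; the real work is always with $K_2$.

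\emph{Step 1: the unweighted $L^q$ bound \eqref{K-Lp}.} By Schur's test, it suffices to show
\[
\sup_{\xi}\,\langle\xi\rangle^{\tau+2-\gamma}\!\!\int_{\mathbb{R}^3}\! k_i(\xi,\xi_*)\,\langle\xi_*\rangle^{-\tau}\,d\xi_* \;\lesssim\; 1,\qquad \sup_{\xi_*}\,\langle\xi_*\rangle^{-\tau}\!\!\int_{\mathbb{R}^3}\! k_i(\xi,\xi_*)\,\langle\xi\rangle^{\tau+2-\gamma}\,d\xi \;\lesssim\; 1,
\]
for $i=1,2$, which would handle simultaneously the endpoints $q=1$ and $q=\infty$ (and hence all $1\le q\le\infty$ by interpolation). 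For $K_1$ the Gaussian $e^{-|\xi|^2/4}e^{-|\xi_*|^2/4}$ makes both quantities bounded trivially. For $K_2$, I would split into $|\xi-\xi_*|\le\langle\xi\rangle/2$ and its complement. On the far region the exponential factor $\exp\{-(1-\kappa)|\xi-\xi_*|^2/8\}$ gives decay better than any polynomial, using also that in this region $\langle\xi_*\rangle\sim\langle\xi\rangle$ or $|\xi-\xi_*|\gtrsim\langle\xi\rangle$. On the near region, $\langle\xi_*\rangle\sim\langle\xi\rangle$ and the bound $a(\xi,\xi_*,\kappa)\lesssim|\xi-\xi_*|^{-1}(1+|\xi|+|\xi_*|)^{\gamma-1}$ gives an integrand $\sim |\xi-\xi_*|^{-1}\langle\xi\rangle^{\gamma-1}e^{-(1-\kappa)|\xi-\xi_*|^2/8}$, which integrates to $\langle\xi\rangle^{\gamma-2}$ after changing variables to $\eta=\xi-\xi_*$. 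Combined with the factor $\langle\xi\rangle^{\tau+2-\gamma}\langle\xi_*\rangle^{-\tau}\sim\langle\xi\rangle^{2-\gamma}$, both suprema are bounded.

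\emph{Step 2: the mixed $L^2$--$L^\infty$ bound.} By Cauchy--Schwarz,
\[
\langle\xi\rangle^{\tau-\gamma+3/2}|Kg(\xi)|\;\le\;\langle\xi\rangle^{\tau-\gamma+3/2}\Big(\!\int k(\xi,\xi_*)^2\langle\xi_*\rangle^{-2\tau}d\xi_*\Big)^{1/2}|g|_{L^2_{\xi,\tau}},
\]
so it suffices to show $\int k(\xi,\xi_*)^2\langle\xi_*\rangle^{-2\tau}d\xi_*\lesssim\langle\xi\rangle^{2\gamma-2\tau-3}$. The same region decomposition as above works: the near region contributes $\int_{|\eta|\le\langle\xi\rangle/2}|\eta|^{-2}\langle\xi\rangle^{2\gamma-2}d\eta\cdot\langle\xi\rangle^{-2\tau}$, which after passing to polar coordinates gives the desired $\langle\xi\rangle^{2\gamma-3-2\tau}$ via the single $|\eta|^{-2}\cdot|\eta|^{2}$ factor; the far region gives exponentially small contribution.

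\emph{Step 3: the weighted bound \eqref{kaK-Lp}.} The kernel of $\varpi K\varpi^{-1}$ is $\varpi(\xi)k(\xi,\xi_*)/\varpi(\xi_*)$, and
\[
\log\frac{\varpi(\xi)}{\varpi(\xi_*)}\;=\;\kappa_0(|\xi|^2-|\xi_*|^2)+\kappa\cdot(\xi-\xi_*)\;=\;\kappa_0(\xi-\xi_*)\cdot(\xi+\xi_*)+\kappa\cdot(\xi-\xi_*).
\]
The plan is to absorb this additional exponential into the Gaussian in $k_2$ (and trivially into the Gaussian of $k_1$). Specifically, Young's inequality gives, for any $\delta>0$,
\[
|\kappa_0(\xi-\xi_*)\cdot(\xi+\xi_*)|\;\le\;\delta\frac{(|\xi|^2-|\xi_*|^2)^2}{|\xi-\xi_*|^2}+\frac{\kappa_0^2}{4\delta}|\xi-\xi_*|^2,
\]
and $|\kappa\cdot(\xi-\xi_*)|\le\delta'|\xi-\xi_*|^2+C_{\delta'}$. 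Picking $\kappa\in(0,1)$ sufficiently close to $1$ and $\delta,\delta'$ sufficiently small (depending on $\kappa_0$ and $\kappa$), the weighted kernel satisfies the \emph{same} pointwise bounds as $k_2$ with a slightly smaller constant $(1-\kappa')/8$ in place of $(1-\kappa)/8$, up to a harmless constant prefactor $e^{C_{\delta'}}$. Step 1 then applies verbatim.

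\emph{Main obstacle.} The only delicate point is Step 3: one must verify that $\kappa_0$ and $\kappa$ can be chosen compatibly so that the exponential weight ratio is genuinely dominated by the Gaussian of $k_2$ for \emph{all} $\xi,\xi_*$. This is not automatic because the weight allows $\kappa_0>0$ arbitrary in principle, but the lemma only asserts the bound for fixed $\varpi$; thus the constant in $\lesssim$ will depend on $\kappa_0,\kappa,\kappa_4$, which is harmless. Once this absorption is made explicit, the remainder of the proof is essentially a repetition of the unweighted case.
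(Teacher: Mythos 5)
Your overall strategy (Grad's pointwise kernel bounds from Lemma \ref{basic} combined with Schur's test and Cauchy--Schwarz) is exactly the standard route behind this lemma, which the paper itself simply quotes from \cite{[Grad], [LinWangWu]}; the problem is that your kernel computations are off by one power of $\langle \xi \rangle$, and that power is precisely what the lemma asserts. In Step 1 you replace $k_{2}$ on the near region by $|\xi -\xi _{\ast }|^{-1}\langle \xi \rangle ^{\gamma -1}e^{-(1-\kappa )|\xi -\xi _{\ast }|^{2}/8}$, i.e.\ you discard the factor $\exp \bigl\{ -\tfrac{1-\kappa }{8}\,(|\xi |^{2}-|\xi _{\ast }|^{2})^{2}/|\xi -\xi _{\ast }|^{2}\bigr\}$. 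With that factor gone, $\int_{|\eta |\leq \langle \xi \rangle /2}|\eta |^{-1}e^{-c|\eta |^{2}}d\eta $ is just a bounded constant, so your near-region integral is $\lesssim \langle \xi \rangle ^{\gamma -1}$, not the claimed $\langle \xi \rangle ^{\gamma -2}$; the Schur row sum then behaves like $\langle \xi \rangle ^{2-\gamma }\cdot \langle \xi \rangle ^{\gamma -1}=\langle \xi \rangle $, which is unbounded, and \eqref{K-Lp} does not follow. The missing idea is the angular decay carried by the discarded factor: writing $\xi _{\ast }=\xi +\eta $ one has $(|\xi |^{2}-|\xi _{\ast }|^{2})^{2}/|\xi -\xi _{\ast }|^{2}=(2\xi \cdot \hat{\eta }+|\eta |)^{2}$ with $\hat{\eta }=\eta /|\eta |$, and integrating $e^{-c(2|\xi |\cos \theta +|\eta |)^{2}}$ over the sphere yields an extra factor $\sim \langle \xi \rangle ^{-1}$, which is what upgrades $\gamma -1$ to $\gamma -2$ (Grad's classical computation). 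The same one-power deficit occurs in Step 2: $\int |\eta |^{-2}e^{-c|\eta |^{2}}d\eta =O(1)$, so your computation gives $\langle \xi \rangle ^{2\gamma -2-2\tau }$ rather than the required $\langle \xi \rangle ^{2\gamma -3-2\tau }$; again the quadratic-form exponent must be retained and the angular integration performed.

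Step 3 also needs repair. The available Gaussian exponent in $k_{2}$ is $\tfrac{1-\kappa }{8}$, so you should take $\kappa $ \emph{small} (the exponent is then close to $1/8$); taking "$\kappa $ sufficiently close to $1$", as you write, makes the exponent vanish and no absorption is possible. More importantly, the absorption is not merely a matter of a $\kappa _{0}$-dependent constant: your own Young-inequality bookkeeping requires $\delta <\tfrac{1-\kappa }{8}$ and $\tfrac{\kappa _{0}^{2}}{4\delta }<\tfrac{1-\kappa }{8}$, hence $\kappa _{0}$ below a fixed numerical threshold (of order $1/4$; the same smallness is needed so that $\varpi (\xi )e^{-|\xi |^{2}/4}$ stays bounded in the $K_{1}$ part). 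For $\kappa _{0}$ above this threshold the conjugated row sums are infinite and the argument collapses, so dismissing this as "harmless" dependence of the constant is not correct; the smallness must be stated, and it is in fact what the paper's application supplies, since there $\varpi $ arises from $\sqrt{\mathcal{M}_{b}}/\sqrt{\mathcal{M}_{a}}$ with $1<\lambda <2$. With the angular integration restored in Steps 1--2 and these restrictions made explicit in Step 3, your plan does yield the lemma along the same lines as the cited references.
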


\begin{lemma}
\label{basic-Gamma}Let $0\leq \gamma \leq 1$ and $\tau \geq 0$. Then
\begin{equation*}
\left| \nu ^{-1}\varpi \Gamma (h_{1},h_{2})\right| _{L_{\xi ,\tau }^{\infty
}}\leq \left|\varpi h_{1}\right|_{L_{\xi ,\tau }^{\infty }}\left|\varpi
h_{2}\right|_{L_{\xi ,\tau }^{\infty }}\text{,}
\end{equation*}%
for any weight function defined as $\left( \ref{weight}\right) $.
\end{lemma}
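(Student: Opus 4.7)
The plan is to decompose $\Gamma(h_1,h_2)=\Gamma_+(h_1,h_2)-\Gamma_-(h_1,h_2)$ into gain and loss parts, using the standard representation (obtained from the collision invariance $\sqrt{\mathcal{M}(\xi')\mathcal{M}(\xi'_*)}=\sqrt{\mathcal{M}(\xi)\mathcal{M}(\xi_*)}$)
\[
\Gamma(h_1,h_2)(\xi)=\tfrac12\int|\xi-\xi_*|^\gamma B(\vartheta)\sqrt{\mathcal{M}(\xi_*)}\bigl[h_1(\xi'_*)h_2(\xi')+h_1(\xi')h_2(\xi'_*)-h_1(\xi_*)h_2(\xi)-h_1(\xi)h_2(\xi_*)\bigr]d\xi_*\,d\omega.
\]
Since $\nu(\xi)\sim\langle\xi\rangle^\gamma$ by Lemma \ref{basic}, it suffices to control $\langle\xi\rangle^\tau\varpi(\xi)|\Gamma_\pm(h_1,h_2)(\xi)|$ pointwise by $\langle\xi\rangle^\gamma|\varpi h_1|_{L^\infty_{\xi,\tau}}|\varpi h_2|_{L^\infty_{\xi,\tau}}$. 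The two loss terms are symmetric in $(h_1,h_2)$, as are the two gain terms, so only one representative of each needs to be handled.

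For the loss part, using the pointwise consequence $|\varpi h_i(\xi)|\le\langle\xi\rangle^{-\tau}|\varpi h_i|_{L^\infty_{\xi,\tau}}$ of the $L^\infty_{\xi,\tau}$ norm, I would write
\[
\varpi(\xi)|h_1(\xi_*)h_2(\xi)|\,\le\,\frac{|\varpi h_1|_{L^\infty_{\xi,\tau}}|\varpi h_2|_{L^\infty_{\xi,\tau}}}{\langle\xi\rangle^\tau\,\varpi(\xi_*)\langle\xi_*\rangle^\tau},
\]
and then estimate the remaining integral $\int|\xi-\xi_*|^\gamma B(\vartheta)\sqrt{\mathcal{M}(\xi_*)}\varpi(\xi_*)^{-1}\langle\xi_*\rangle^{-\tau}d\xi_*d\omega$ by $\langle\xi\rangle^\gamma$ via the elementary splitting $|\xi-\xi_*|^\gamma\lesssim\langle\xi\rangle^\gamma+\langle\xi_*\rangle^\gamma$ together with the super-Gaussian tail of $\sqrt{\mathcal{M}(\xi_*)}\varpi(\xi_*)^{-1}\sim\exp\{-(\tfrac14+\kappa_0)|\xi_*|^2-\kappa\cdot\xi_*\}$, which is integrable because $\kappa_0>0$. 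The prefactor $\nu(\xi)^{-1}\lesssim\langle\xi\rangle^{-\gamma}$ then absorbs the $\langle\xi\rangle^\gamma$, giving the loss bound.

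The main obstacle is the gain part $\Gamma_+$, where the weight $\varpi(\xi)$ must be transferred onto the primed velocities $\xi',\xi'_*$. The exponential form of $\varpi$ in \eqref{weight} is precisely tailored to the collision invariants: combining momentum conservation $\xi+\xi_*=\xi'+\xi'_*$ with energy conservation $|\xi|^2+|\xi_*|^2=|\xi'|^2+|\xi'_*|^2$ yields the key identity
\[
\varpi(\xi)\,\varpi(\xi_*)=\varpi(\xi')\,\varpi(\xi'_*),
\]
which allows me to rewrite
\[
\varpi(\xi)h_1(\xi'_*)h_2(\xi')\,=\,\frac{[\varpi(\xi'_*)h_1(\xi'_*)][\varpi(\xi')h_2(\xi')]}{\varpi(\xi_*)}\,\le\,\frac{|\varpi h_1|_{L^\infty_{\xi,\tau}}|\varpi h_2|_{L^\infty_{\xi,\tau}}}{\varpi(\xi_*)\langle\xi'_*\rangle^\tau\langle\xi'\rangle^\tau}.
\]
The polynomial factor $\langle\xi'_*\rangle^{-\tau}\langle\xi'\rangle^{-\tau}$ must still produce the $\langle\xi\rangle^{-\tau}$ needed to offset the outer weight $\langle\xi\rangle^\tau$; for this I invoke energy conservation once more in the form $\max(\langle\xi'\rangle,\langle\xi'_*\rangle)^2\ge\tfrac12(\langle\xi\rangle^2+\langle\xi_*\rangle^2)\ge\langle\xi\rangle^2/2$, which, together with $\tau\ge0$ and $\langle\,\cdot\,\rangle\ge1$, yields $\langle\xi'_*\rangle^{-\tau}\langle\xi'\rangle^{-\tau}\le 2^{\tau/2}\langle\xi\rangle^{-\tau}$. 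What remains is the same integral as in the loss case, $\int|\xi-\xi_*|^\gamma B(\vartheta)\sqrt{\mathcal{M}(\xi_*)}\varpi(\xi_*)^{-1}d\xi_*d\omega\lesssim\langle\xi\rangle^\gamma$, which is cancelled by $\nu(\xi)^{-1}\lesssim\langle\xi\rangle^{-\gamma}$. Assembling the loss and gain estimates completes the proof.
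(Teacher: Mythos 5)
The paper states Lemma \ref{basic-Gamma} without proof, citing it as a standard estimate, so there is no in-paper argument to compare against; your proposal is a correct and complete proof along exactly the standard lines: gain/loss splitting of $\Gamma$, the identity $\varpi(\xi)\varpi(\xi_*)=\varpi(\xi')\varpi(\xi'_*)$ (valid because $\varpi$ is the exponential of a combination of collision invariants), transfer of the polynomial weight via $\max(\langle\xi'\rangle,\langle\xi'_*\rangle)^{2}\ge\tfrac12\langle\xi\rangle^{2}$ with $\tau\ge 0$, integrability of $\sqrt{\mathcal{M}(\xi_*)}\,\varpi(\xi_*)^{-1}$ for $\kappa_0>0$, and absorption of $\langle\xi\rangle^{\gamma}$ by $\nu(\xi)^{-1}$. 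The only discrepancy is cosmetic: your argument yields the bound with a multiplicative constant depending on $\gamma$, $B$, $\tau$ and the weight parameters, whereas the lemma is printed with constant $1$; since the paper only ever invokes this estimate through ``$\lesssim$'', that is an imprecision of the statement rather than a gap in your proof.
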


\begin{theorem}[{The large time behavior for $0\leq \protect \gamma \leq 1$,
\protect \cite{[LinWangLyuWu]}}]
\label{prop: nonlinear}Let $\beta >3/2+\gamma $ and let $\varpi =\varpi (\xi
)$ be any weight function defined as $\left( \ref{weight}\right) $. Assume
that the initial data $\eps f_{0}^{\alpha }$ satisfies $\varpi f_{0}^{\alpha
}\in L_{\xi ,\beta }^{\infty }(L_{x}^{1}\cap L_{x}^{\infty })$ and $%
\varepsilon >0$ is sufficiently small. Then there is a unique solution $%
f^{\alpha }$ to $(\ref{nonlinear})$ in $L_{\xi ,\beta }^{\infty }(\varpi
)(L_{x}^{2}\cap L_{x}^{\infty })$ with
\begin{eqnarray*}
\left \Vert \varpi f^{\alpha }(t)\right \Vert _{L_{\xi ,\beta }^{\infty
}L_{x}^{2}} &\leq &\eps C_{1}(1+t)^{-\frac{3}{4}}\left \Vert \varpi
f_{0}^{\alpha }\right \Vert _{L_{\xi ,\beta }^{\infty }(L_{x}^{1}\cap
L_{x}^{\infty })}\text{,}%
\vspace {3mm}
\\
\left \Vert \varpi f^{\alpha }(t)\right \Vert _{L_{\xi ,\beta }^{\infty
}L_{x}^{\infty }} &\leq &\eps C_{2}(1+t)^{-\frac{3}{2}}\left \Vert \varpi
f_{0}^{\alpha }\right \Vert _{L_{\xi ,\beta }^{\infty }(L_{x}^{1}\cap
L_{x}^{\infty })}\text{.}
\end{eqnarray*}%
for some positive constants $C_{1}$ and $C_{2}$.
\end{theorem}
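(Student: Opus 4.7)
The plan is to combine a Duhamel iteration with a bootstrap argument in a weighted $L^\infty_{\xi,\beta}$ norm. Writing
$$
f^\alpha(t)=\varepsilon\,\mathbb{G}^{t}_{\alpha}f_0^\alpha+\int_0^t \mathbb{G}^{t-\tau}_{\alpha}\Gamma_\alpha(f^\alpha,f^\alpha)(\tau)\,d\tau,
$$
where $\mathbb{G}^{t}_{\alpha}$ is the semigroup generated by $-\xi\cdot\nabla_x+\mathcal{L}_\alpha$, I would introduce the bootstrap quantity
$$
M(t):=\sup_{0\leq s\leq t}\Bigl\{(1+s)^{3/4}\|\varpi f^\alpha(s)\|_{L^\infty_{\xi,\beta}L^{2}_{x}}+(1+s)^{3/2}\|\varpi f^\alpha(s)\|_{L^\infty_{\xi,\beta}L^{\infty}_{x}}\Bigr\},
$$
and aim to prove a closed inequality of the form $M(t)\leq C\varepsilon\|\varpi f_0^\alpha\|_{L^\infty_{\xi,\beta}(L^1_x\cap L^\infty_x)}+CM(t)^{2}$. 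For $\varepsilon$ small this forces $M(t)\leq 2C\varepsilon\|\varpi f_0^\alpha\|_{L^\infty_{\xi,\beta}(L^1_x\cap L^\infty_x)}$, which gives both stated decay rates.

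The linear ingredient I would rely on is the sharp weighted semigroup decay
$$
\|\varpi\,\mathbb{G}^{t}_{\alpha}h\|_{L^\infty_{\xi,\beta}L^q_x}\lesssim(1+t)^{-\frac{3}{2}(1-1/q)}\|\varpi h\|_{L^\infty_{\xi,\beta}(L^1_x\cap L^q_x)},\qquad q=2,\infty,
$$
obtained from the spectral splitting $\mathbb{G}^{t}_{\alpha}=\mathbb{G}^{t}_{\alpha,L;0}+\mathbb{G}^{t}_{\alpha,L;\perp}+\mathbb{G}^{t}_{\alpha,S}$ developed earlier in the paper: the heat-kernel-type factors $(1+t)^{-3/4}$ and $(1+t)^{-3/2}$ come entirely from the five-dimensional fluid long-wave piece $\mathbb{G}^{t}_{\alpha,L;0}$, while $\mathbb{G}^{t}_{\alpha,L;\perp}$ and $\mathbb{G}^{t}_{\alpha,S}$ decay strictly faster, and conjugation by $\varpi$ only replaces $K$ by $\varpi K\varpi^{-1}$, still controlled by (\ref{kaK-Lp}) of Lemma \ref{basic2}. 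The nonlinear ingredient comes from Lemma \ref{basic-Gamma} combined with H\"older in $x$:
$$
\|\varpi\Gamma_\alpha(f,f)\|_{L^\infty_{\xi,\beta}L^{1}_x}\lesssim\|\varpi f\|_{L^\infty_{\xi,\beta}L^{2}_x}^{2},\qquad \|\varpi\Gamma_\alpha(f,f)\|_{L^\infty_{\xi,\beta}L^{\infty}_x}\lesssim\|\varpi f\|_{L^\infty_{\xi,\beta}L^{\infty}_x}^{2},
$$
so that under the ansatz $\|\varpi\Gamma_\alpha(f^\alpha,f^\alpha)(\tau)\|_{L^\infty_{\xi,\beta}(L^1_x\cap L^\infty_x)}\lesssim M(t)^{2}(1+\tau)^{-3/2}$.

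Inserting both ingredients into the Duhamel integral and splitting at $\tau=t/2$ reduces matters to the convolution bounds
$$
\int_0^t(1+t-\tau)^{-3/4}(1+\tau)^{-3/2}d\tau\lesssim(1+t)^{-3/4},\quad \int_0^t(1+t-\tau)^{-3/2}(1+\tau)^{-3/2}d\tau\lesssim(1+t)^{-3/2},
$$
so the nonlinear contribution inherits the same rates as the linear one and closes the inequality for $M(t)$. Local existence in the same norm follows from a standard contraction mapping on a small time interval using these very estimates, and the a priori bound extends the solution globally with the two claimed rates.

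The main obstacle is not the Duhamel bootstrap itself, which is essentially mechanical, but establishing the sharp weighted semigroup decay in the $L^\infty_{\xi,\beta}L^q_x$ topology uniformly in $\alpha$. A naive spectral analysis only delivers the non-fluid long-wave and short-wave parts in $L^2_xL^2_\xi$, which is too weak here; upgrading to $L^\infty$ in velocity with an exponential weight and to $L^1_x\cap L^\infty_x$ on the space side requires a Liu--Yu-type Green's function bootstrap that exploits the weighted kernel bound (\ref{kaK-Lp}). Once that weighted linear decay is in place, the nonlinear closure above is routine.
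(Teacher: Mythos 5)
You should note first that this paper does not actually prove Theorem \ref{prop: nonlinear}: it is imported verbatim from \cite{[LinWangLyuWu]} (with Subsection \ref{Rmk for fb} explaining why the case $\alpha=b$ follows from $\alpha=a$ by a Galilean/scaling change of variables), so there is no internal proof to compare against. Your architecture --- weighted $L^{\infty}_{\xi,\beta}$ decay of the linearized semigroup obtained by the long-wave/short-wave spectral splitting plus a Liu--Yu type bootstrap in the velocity weight, then Duhamel and a time-weighted a priori estimate closed for small $\varepsilon$ --- is exactly the standard route that the cited work follows, so at the level of strategy the proposal is on target.

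There is, however, one concrete gap in the nonlinear step. Lemma \ref{basic-Gamma} controls $\nu^{-1}\varpi\Gamma(h_{1},h_{2})$, not $\varpi\Gamma(h_{1},h_{2})$: the collision term carries a factor $\nu(\xi)\sim\langle\xi\rangle^{\gamma}$, so your stated bounds $\|\varpi\Gamma_{\alpha}(f,f)\|_{L^{\infty}_{\xi,\beta}L^{q}_{x}}\lesssim\|\varpi f\|_{L^{\infty}_{\xi,\beta}L^{q}_{x}}^{2}$ are false for hard potentials $0<\gamma\leq1$ (they hold only with $\nu^{-1}$ inserted, equivalently with $\beta$ replaced by $\beta-\gamma$ on the left). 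Feeding the correct bound into your black-box semigroup estimate then loses $\gamma$ powers of the velocity weight at every Duhamel application, and the bootstrap in $L^{\infty}_{\xi,\beta}$ does not close as written. The standard repair, which is also what this paper itself does in Section \ref{sec:proof-main} (the $g_{1}/g_{2}$ splitting and the treatment of $\chi_{11}$, $\chi_{12}$), is to run the nonlinear Duhamel term through the damped transport semigroup $\mathbb{S}^{t}$, where the factor $\int_{0}^{t}e^{-\nu(\xi)(t-s)}\nu(\xi)\,ds\lesssim1$ absorbs the $\nu$-growth, and to recover the remaining weight through the gain term $K$ via \eqref{kaK-Lp}: one first closes the estimate at weight index $\beta-\gamma$ using the full semigroup decay and then upgrades to index $\beta$ by this $\mathbb{S}^{t}$--$K$ bootstrap; this bookkeeping is precisely why the hypothesis reads $\beta>3/2+\gamma$ rather than $\beta>3/2$. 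A secondary point: your product estimates silently exchange $L^{\infty}_{\xi}L^{2}_{x}$ and $L^{2}_{x}L^{\infty}_{\xi}$ (the pointwise-in-$x$ use of Lemma \ref{basic-Gamma} produces the latter, which does not dominate the former); to obtain the mixed norms you quote one should apply Minkowski's inequality inside the kernel representation of $\Gamma$, using $\beta>3/2$ for the $\xi$-integrability. With these two repairs your bootstrap closes and yields the quoted rates.
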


\subsection{Refined estimate for the linearized Boltzmann equation}

\label{refine} Let $u$ be the solution of the linearized Boltzmann equation
\begin{equation}
\left \{
\begin{array}{l}
\partial _{t}u+\xi \cdot \nabla _{x}u=\mathcal{L}u\text{,}%
\vspace {3mm}
\\
u\left( 0,x,\xi \right) =u_{0}\left( x,\xi \right) \text{,}%
\end{array}%
\right.  \label{Linearied Bolt}
\end{equation}%
and $\mathbb{G}^{t}$ the corresponding solution operator, i.e., $u=\mathbb{G}%
^{t}u_{0}$. Then we have known that

\begin{theorem}[{\protect \cite{[LinWangLyuWu], [Zhong]}}]
\label{Thm for linear}Let $u$ be the solution of $\left( \ref{Linearied Bolt}%
\right) $ and $\beta >3/2$. Then
\begin{equation*}
\left \Vert \mathbb{G}^{t}u_{0}\right \Vert _{L_{\xi ,\beta }^{\infty
}L_{x}^{\infty }}\leq C\left( 1+t\right) ^{-3/4}\left[ \left \Vert
u_{0}\right \Vert _{L_{\xi ,\beta }^{\infty }L_{x}^{\infty }}+\left \Vert
u_{0}\right \Vert _{L_{\xi ,\beta }^{\infty }L_{x}^{2}}\right] \text{,}
\end{equation*}%
\begin{equation*}
\left \Vert \mathbb{G}^{t}u_{0}\right \Vert _{L_{\xi ,\beta }^{\infty
}L_{x}^{2}}\leq C\left[ \left \Vert u_{0}\right \Vert _{L_{\xi ,\beta
}^{\infty }L_{x}^{2}}\right] \text{,}
\end{equation*}%
for $u_{0}\in L_{\xi ,\beta }^{\infty }\left( L_{x}^{\infty }\cap
L_{x}^{2}\right) $, and
\begin{equation*}
\left \Vert \mathbb{G}^{t}u_{0}\right \Vert _{L_{\xi ,\beta }^{\infty
}L_{x}^{\infty }}\leq C\left( 1+t\right) ^{-3/2}\left[ \left \Vert
u_{0}\right \Vert _{L_{\xi ,\beta }^{\infty }L_{x}^{\infty }}+\left \Vert
u_{0}\right \Vert _{L_{\xi ,\beta }^{\infty }L_{x}^{1}}\right] \text{,}
\end{equation*}%
\begin{equation*}
\left \Vert \mathbb{G}^{t}u_{0}\right \Vert _{L_{\xi ,\beta }^{\infty
}L_{x}^{2}}\leq C\left( 1+t\right) ^{-3/4}\left[ \left \Vert u_{0}\right
\Vert _{L_{\xi ,\beta }^{\infty }L_{x}^{2}}+\left \Vert u_{0}\right
\Vert _{L_{\xi ,\beta }^{\infty }L_{x}^{1}}\right] \text{,}
\end{equation*}%
for $u_{0}\in L_{\xi ,\beta }^{\infty }\left( L_{x}^{\infty }\cap
L_{x}^{1}\right) $. Moreover, if $\mathrm{P}_{0}u_{0}=0$, then we will get
extra $(1+t)^{-1/2}$ decay rate in each estimate above.
\end{theorem}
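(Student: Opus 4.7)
The plan is to combine Fourier–spectral analysis in the $L^2_\xi$ setting with an iterated Duhamel procedure that lifts the resulting decay into the weighted $L^\infty_{\xi,\beta}$ space. Taking the spatial Fourier transform of \eqref{Linearied Bolt}, each mode $\hat{u}(t,\eta,\xi)$ satisfies $\partial_t \hat u = (-i\xi\cdot\eta + \mathcal{L})\hat u$. I would invoke the classical Ellis–Pinsky/Ukai spectral structure of this operator on $L^2_\xi$: (i) for $|\eta|\le r_0$ small, five analytic fluid eigenvalues $\lambda_j(\eta)$ bifurcate from $0$ with $\mathrm{Re}\,\lambda_j(\eta)=-c_j|\eta|^2+O(|\eta|^3)$; (ii) for such $\eta$ the remainder of the spectrum lies in $\mathrm{Re}\,z\le -\sigma_0<0$; (iii) for $|\eta|\ge r_0$ the coercivity in Lemma \ref{basic}, together with a resolvent estimate, yields a uniform spectral gap. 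This provides a decomposition $\mathbb{G}^t=\mathbb{G}^t_L+\mathbb{G}^t_S$, where the long-wave piece $\mathbb{G}^t_L$ restricts to $|\eta|\le r_0$ and projects onto the fluid eigenfunctions.

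The long-wave piece is the Fourier multiplier $\sum_j e^{\lambda_j(\eta)t} E_j(\eta)\mathbf{1}_{|\eta|\le r_0}$, which behaves like a three-dimensional heat kernel. Plancherel and Hausdorff–Young then give
\begin{equation*}
\|\mathbb{G}^t_L u_0\|_{L^2_x L^2_\xi}\lesssim(1+t)^{-3/4}\|u_0\|_{L^1_x L^2_\xi},\qquad
\|\mathbb{G}^t_L u_0\|_{L^\infty_x L^2_\xi}\lesssim(1+t)^{-3/2}\|u_0\|_{L^1_x L^2_\xi}.
\end{equation*}
When $\mathrm{P}_0 u_0=0$, the eigenprojections $E_j(\eta)$ vanish at $\eta=0$, producing an extra factor $|\eta|$ in the multiplier and hence an additional $(1+t)^{-1/2}$ decay in each bound. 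The short-wave piece $\mathbb{G}^t_S$ decays exponentially in $L^2_x L^2_\xi$ thanks to the spectral gap. Together, these yield every claimed estimate but with $L^2_\xi$ replacing the weighted $L^\infty_{\xi,\beta}$ on the right-hand side.

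The final and most delicate step is to upgrade from $L^2_\xi$ to $L^\infty_{\xi,\beta}$. Let $S^t h(x,\xi):=e^{-\nu(\xi)t}h(x-\xi t,\xi)$ be the damped transport semigroup generated by $-\xi\cdot\nabla_x-\nu(\xi)$; since $\nu(\xi)\gtrsim\langle\xi\rangle^\gamma$ (Lemma \ref{basic}), $S^t$ is a bounded contraction on $L^\infty_{\xi,\beta}L^p_x$ for every $p\in[1,\infty]$. Iterating Duhamel twice gives
\begin{equation*}
u(t)=S^t u_0+\int_0^t S^{t-\tau}K S^\tau u_0\,d\tau+\int_0^t\!\!\int_0^\tau S^{t-\tau}K S^{\tau-s}K\,\mathbb{G}^s u_0\,ds\,d\tau.
\end{equation*}
The first two terms are controlled using $|Kg|_{L^\infty_{\xi,\beta}}\lesssim|g|_{L^\infty_{\xi,\beta-2+\gamma}}$ (Lemma \ref{basic2}). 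For the double integral, the smoothing bound $|Kg|_{L^\infty_{\xi,\beta}}\lesssim|g|_{L^2_\xi}$ from the same lemma lets the outer $K$ consume the $L^2_\xi$-in-$\xi$ information on $\mathbb{G}^s u_0$ supplied by the spectral step. A dyadic split $[0,t]=[0,t/2]\cup[t/2,t]$ of both time variables, together with the $L^p_x\to L^q_x$ mapping of $S^{t-\tau}$ coming from the transport shift, then reproduces the stated rates: $(1+t)^{-3/4}$ when data are placed in $L^\infty_{\xi,\beta}L^2_x$ and $(1+t)^{-3/2}$ when they are placed in $L^\infty_{\xi,\beta}L^1_x$.

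The principal obstacle is keeping sharp polynomial decay while performing this $L^2_\xi\to L^\infty_{\xi,\beta}$ upgrade without requiring extra Sobolev regularity on $u_0$. Every appearance of $K$ must be paired with either an $L^2_\xi$-level bound (to trigger the smoothing into $L^\infty_{\xi,\beta}$) or an $L^\infty_{\xi,\beta}$ bound with slightly relaxed weight (via \eqref{K-Lp}); one has to track carefully which of the two factors $K$ absorbs the spatial integrability loss so that the final rate on $\|\cdot\|_{L^\infty_{\xi,\beta}L^\infty_x}$ matches $(1+t)^{-3/4}$ or $(1+t)^{-3/2}$, and is not degraded by the Duhamel time integrations. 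The $\mathrm{P}_0 u_0=0$ refinement is inherited through the chain, because the extra $(1+t)^{-1/2}$ gain is built into the long-wave multiplier and thus propagates into every $L^2_\xi$-level input fed into the outer smoothing step.
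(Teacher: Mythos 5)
Your overall architecture (Ellis--Pinsky spectral decomposition in Fourier space to get $L^2_{x}L^2_{\xi}$ decay, then an iterated Duhamel representation with the damped transport semigroup and the smoothing of $K$ to lift to $L^{\infty}_{\xi,\beta}$) is indeed the standard route of the cited works \cite{[LinWangLyuWu],[Zhong]}; note the paper itself does not prove Theorem \ref{Thm for linear} but quotes it, and the closest in-paper analogue of what you are attempting is the proof of Proposition \ref{Prop-linear}. Your long-wave estimates, the $\mathrm{P}_0u_0=0$ refinement via the vanishing of the fluid projections at $\eta=0$, and the velocity-weight upgrade through $|Kg|_{L^{\infty}_{\xi,\tau-\gamma+3/2}}\lesssim|g|_{L^2_{\xi,\tau}}$ and \eqref{K-Lp} are all sound.

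The genuine gap is the spatial upgrade from $L^2_x$ to $L^{\infty}_x$ for the non-fluid (short-wave) contribution, which is the only hard point of the theorem. You attribute it to ``the $L^p_x\to L^q_x$ mapping of $S^{t-\tau}$ coming from the transport shift,'' but no such mapping exists: for each fixed $\xi$, $\mathbb{S}^{t}$ is just a translation $x\mapsto x-\xi t$ multiplied by $e^{-\nu(\xi)t}$, so it is an isometry on $L^p_x$ (up to the damping) and gains no spatial integrability, and in the mixed norms $L^{\infty}_{\xi,\beta}L^p_x$ it is merely bounded. With only this, your double Duhamel term is controlled in $L^{\infty}_{\xi,\beta}L^2_x$ but never in $L^{\infty}_{\xi,\beta}L^{\infty}_x$. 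The actual mechanism must come from the \emph{composition} $\mathbb{S}^{t-\tau}K\mathbb{S}^{\tau-s}K$: changing variables $\xi_{*}\mapsto y=x-\xi(t-\tau)-\xi_{*}(\tau-s)$ converts $L^2_{x}L^2_{\xi}$ information into pointwise bounds, but at the price of a Jacobian factor $(\tau-s)^{-3/2}$ which is not integrable at $\tau=s$, so a two-fold iteration with naive estimates does not close; one must either cut off small time differences and large velocities and absorb the leftover by a smallness/bootstrap argument, or, as the paper does for Proposition \ref{Prop-linear}, iterate $m\ge 6$ times and invoke the Mixture Lemma to give the remainder $R^{(m)}$ two derivatives in $x$ (estimate \eqref{remainder}), after which $\|\cdot\|_{L^{\infty}_xL^2_\xi}\lesssim\|\cdot\|_{L^2_\xi L^2_x}^{3/4}\|\cdot\|_{L^2_\xi H^2_x}^{1/4}$ together with the exponential decay of $W^{(m)}$ yields the $L^{\infty}_x$ bound. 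Until you replace the nonexistent $L^p_x\to L^q_x$ property of $\mathbb{S}^{t}$ by one of these mechanisms (and explain how the resulting singular or high-order iteration still preserves the sharp rates $(1+t)^{-3/4}$ and $(1+t)^{-3/2}$), the proof is incomplete at its central step.
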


To attain the desired time decay rate, we need to refine these estimates for
the linearized Boltzmann equation. According to the semigroup theory, the
solution $u$ to$\  \left( \ref{Linearied Bolt}\right) $ can be represented by
\begin{equation*}
u\left( t,x,\xi \right) =\mathbb{G}^{t}u_{0}=\left( 2\pi \right) ^{-3}\int_{%
\mathbb{R}^{3}}e^{ix\cdot \eta }e^{\left( -i\xi \cdot \eta +\mathcal{L}%
\right) t}\widehat{u}_{0}\left( \eta \right) d\eta \text{,}
\end{equation*}%
where $\widehat{u}_{0}$ is the Fourier transformation of $u_{0}$ with
respect to the space variable $x$. Based on the spectrum analysis of the
operator $-i\xi \cdot \eta +\mathcal{L}$, the semigroup $e^{\left( -i\xi
\cdot \eta +\mathcal{L}\right) t}$ can be decomposed as
\begin{eqnarray*}
e^{\left( -i\xi \cdot \eta +\mathcal{L}\right) t} &=&\chi _{\delta }\left(
\eta \right) \sum_{j=0}^{4}e^{\lambda _{j}\left( \eta \right) t}\left \vert
e_{j}\left( \eta \right) \big>\big<e_{j}\left( \eta \right) \right \vert
+\chi _{\delta }\left( \eta \right) e^{\left( -i\xi \cdot \eta +\mathcal{L}%
\right) t}\Pi _{\eta }^{\bot }+\left( 1-\chi _{\delta }\left( \eta \right)
\right) e^{\left( -i\xi \cdot \eta +\mathcal{L}\right) t}%
\vspace {3mm}
\\
&=&:\widehat{\mathbb{G}}_{L;0}\left( \eta ,t\right) +\widehat{\mathbb{G}}%
_{L;\bot }\left( \eta ,t\right) +\widehat{\mathbb{G}}_{S}\left( \eta
,t\right)
\end{eqnarray*}%
where $\chi _{\delta }\left( \eta \right) $ is a smooth cutoff function with
$0\leq \chi _{\delta }\leq 1$, $\chi _{\delta }\left( \eta \right) =1$ for $%
\left \vert \eta \right \vert \leq \frac{\delta }{2}$ and $0$ for $%
\left
\vert \eta \right \vert \geq \delta $, for $\delta >0$ sufficient
small. Note that the spectrums of $\widehat{\mathbb{G}}_{L;\bot }\left( \eta
,t\right) $ and $\widehat{\mathbb{G}}_{S}\left( \eta ,t\right) $ are
strictly away from imaginary axis (with negative real part). Moreover for $\left \vert
\eta \right \vert \ll 1$, the spectrum Spec $\left( \eta \right) $ of the
operator $-i\xi \cdot \eta +\mathcal{L}$ consists of exactly five
eigenvalues $\lambda _{j}\left( \eta \right) $ ($0\leq j\leq 4$) associated
with the corresponding eigenfunctions $e_{j}\left( \eta \right) $ (\cite{[EP],
[LiuYu1], [YangYu]}):
\begin{equation*}
\begin{array}{l}
\displaystyle \lambda _{j}(\eta )=-i\,a_{j}|\eta |-A_{j}|\eta |^{2}+O(|\eta
|^{3})\text{,} \\
\\
\displaystyle e_{j}(\eta )=E_{j}+O(|\eta |)\text{,}%
\end{array}%
\end{equation*}%
here $A_{j}>0$, $\left \langle e_{j}(-\eta ),e_{l}(\eta )\right \rangle
_{\xi }=\delta _{jl}$, $0\leq j$, $l\leq 4$ and
\begin{equation*}
\left \{
\begin{array}{l}
a_{0}=\sqrt{\frac{5}{3}}\text{,}\quad a_{1}=-\sqrt{\frac{5}{3}}\text{,}\,
\quad a_{2}=a_{3}=a_{4}=0\text{,} \\[2mm]
E_{0}=\sqrt{\frac{3}{10}}\chi _{0}+\sqrt{\frac{1}{2}}\om \cdot \overline{%
\chi }+\sqrt{\frac{1}{5}}\chi _{4}\text{,} \\[2mm]
E_{1}=\sqrt{\frac{3}{10}}\chi _{0}-\sqrt{\frac{1}{2}}\om \cdot \overline{%
\chi }+\sqrt{\frac{1}{5}}\chi _{4}\text{,} \\[2mm]
E_{2}=-\sqrt{\frac{2}{5}}\chi _{0}+\sqrt{\frac{3}{5}}\chi _{4}\, \text{,} \\%
[2mm]
E_{3}=\om_{1}\cdot \overline{\chi }\text{,} \\[2mm]
E_{4}=\om_{2}\cdot \overline{\chi }\text{,}%
\end{array}%
\right.
\end{equation*}%
where $\overline{\chi }=(\chi _{1},\chi _{2},\chi _{3})$, $\eta =\left \vert
\eta \right \vert \om$ ($\om \in S^{2}$) and $\{ \om_{1},\om_{2},\om \}$ is
an orthonormal basis of ${\mathbb{R}}^{3}$. And then we define
\begin{equation*}
u_{L;0}=\mathbb{G}_{L;0}^{t}u_{0}=\left( 2\pi \right) ^{-3}\int_{\mathbb{R}%
^{3}}e^{ix\cdot \eta }\widehat{\mathbb{G}}_{L;0}\left( \eta ,t\right)
\widehat{u}_{0}\left( \eta \right) d\eta \text{,}
\end{equation*}%
\begin{equation*}
u_{L;\bot }=\mathbb{G}_{L;\bot }^{t}u_{0}=\left( 2\pi \right) ^{-3}\int_{%
\mathbb{R}^{3}}e^{ix\cdot \eta }\widehat{\mathbb{G}}_{L;\bot }\left( \eta
,t\right) \widehat{u}_{0}\left( \eta \right) d\eta \text{,}
\end{equation*}%
\begin{equation*}
u_{S}=\mathbb{G}_{S}^{t}u_{0}=\left( 2\pi \right) ^{-3}\int_{\mathbb{R}%
^{3}}e^{ix\cdot \eta }\widehat{\mathbb{G}}_{S}\left( \eta ,t\right) \widehat{%
u}_{0}\left( \eta \right) d\eta \text{,}
\end{equation*}%
called the fluid part and nonfluid part of the long wave of $u$, and the
short wave of $u$, respectively. In the meanwhile, we define%
\begin{equation*}
u_{0L}:=\left( 2\pi \right) ^{-3}\int_{\mathbb{R}^{3}}e^{ix\cdot \eta }\chi
_{\delta }\left( \eta \right) \widehat{u}_{0}\left( \eta \right) d\eta \text{%
,}
\end{equation*}%
\begin{equation*}
u_{0S}:=\left( 2\pi \right) ^{-3}\int_{\mathbb{R}^{3}}e^{ix\cdot \eta
}\left( 1-\chi _{\delta }\left( \eta \right) \right) \widehat{u}_{0}\left(
\eta \right) d\eta \text{.}
\end{equation*}%
One can see that the long wave $u_{L}:=u_{L;0}+u_{L;\bot }$ satisfies the
equation%
\begin{equation}
\left \{
\begin{array}{l}
\partial _{t}u_{L}+\xi \cdot \nabla _{x}u_{L}=\mathcal{L}u_{L}\text{,}%
\vspace {3mm}
\\
u_{L}\left( 0,x,\xi \right) =u_{0L}\left( x,\xi \right) \text{,}%
\end{array}%
\right.  \label{Eqn-long}
\end{equation}%
and the short wave satisfies%
\begin{equation}
\left \{
\begin{array}{l}
\partial _{t}u_{S}+\xi \cdot \nabla _{x}u_{S}=\mathcal{L}u_{S}\text{,}%
\vspace {3mm}
\\
u_{S}\left( 0,x,\xi \right) =u_{0S}\left( x,\xi \right) \text{.}%
\end{array}%
\right.  \label{Eqn-short}
\end{equation}

According to \cite{[LinWangWu], [LiuYu2]}, the wave structure is given by
\begin{equation*}
\left \Vert \partial _{x}^{\alpha }\mathbb{G}_{L;0}^{t}\left( x,t\right)
\right \Vert \leq C_{N}(1+t)^{-|\alpha |/2}\left[
\begin{array}{l}
\left( 1+t\right) ^{-2}B_{N}\left( \left \vert x\right \vert -\mathbf{c}%
t,t\right) +\left( 1+t\right) ^{-3/2}B_{N}\left( \left \vert x\right \vert
,t\right) \\[2mm]
+\mathbf{1}_{\{ \left \vert x\right \vert \leq \mathbf{c}t\}}\left(
1+t\right) ^{-3/2}B_{3/2}\left( \left \vert x\right \vert ,t\right)%
\end{array}%
\right]
\end{equation*}%
for all $N\in \mathbb{N}$ if $0\leq \gamma \leq 1$, where $\alpha =\left(
\alpha _{1},\alpha _{2},\alpha _{3}\right) $ is a multi-index with $%
\left
\vert \alpha \right \vert \geq 0$, $\left \Vert \cdot \right \Vert $
denotes the operator norm from $L_{\xi }^{2}$ to $L_{\xi }^{2}$, the number $%
\mathbf{c=}\sqrt{\frac{5}{3}}$ is the sound speed, $\mathbf{1}_{D}$ is the
characteristic function of the set $D$ and
\begin{equation*}
B_{N}\left( z,t\right) =\left( 1+\frac{z^{2}}{1+t}\right) ^{-N}\text{.}\,
\end{equation*}%
We then have the following proposition:

\begin{proposition}
\label{Prop-LS}Let $u$ be a solution to (\ref{Linearied Bolt}) with the
initial data $u_{0}$. Then
\begin{equation}
\left \Vert \partial _{x}^{\alpha }\mathbb{G}_{L;0}^{t}u_{0}\right \Vert
_{L_{x}^{q}\left( L_{\xi }^{2}\right) }\leq C\left( 1+t\right) ^{-\frac{3}{2}%
\left( \frac{1}{p}-\frac{1}{q}\right) -\frac{\left \vert \alpha \right \vert
}{2}}\left \Vert u_{0}\right \Vert _{L_{x}^{p}L_{\xi }^{2}}\text{,}\quad
1\leq p\leq q\leq \infty \, \text{,}  \label{long-fluid}
\end{equation}%
\begin{equation}
\left \Vert \partial _{x}^{\alpha }\mathbb{G}_{L;\bot }^{t}u_{0}\right \Vert
_{L_{x}^{2}L_{\xi }^{2}}\leq Ce^{-\frac{t}{c}}\left \Vert u_{0}\right \Vert
_{L_{x}^{2}L_{\xi }^{2}}\, \text{,}  \label{long-nonfluid}
\end{equation}%
\begin{equation}
\left \Vert \mathbb{G}_{S}^{t}u_{0}\right \Vert _{L_{x}^{2}L_{\xi }^{2}}\leq
Ce^{-\frac{t}{c}}\left \Vert u_{0}\right \Vert _{L_{x}^{2}L_{\xi }^{2}}\,
\text{,}  \label{short}
\end{equation}%
and moreover, if $\mathrm{P}_{0}u_{0}=0$, then
\begin{equation*}
\left \Vert \partial _{x}^{\alpha }\mathbb{G}_{L;0}^{t}u_{0}\right \Vert
_{L_{x}^{q}\left( L_{\xi }^{2}\right) }\leq C\left( 1+t\right) ^{-\frac{3}{2}%
\left( \frac{1}{p}-\frac{1}{q}\right) -\frac{1}{2}-\frac{\left \vert \alpha
\right \vert }{2}}\left \Vert u_{0}\right \Vert _{L_{x}^{p}L_{\xi }^{2}}%
\text{\thinspace ,}\quad 1\leq p\leq q\leq \infty \text{\thinspace ,}
\end{equation*}%
for some constants $C$, $c>0$, where $\alpha =\left( \alpha _{1},\alpha
_{2},\alpha _{3}\right) $ is a multi-index with $\left \vert \alpha
\right
\vert \geq 0$.
\end{proposition}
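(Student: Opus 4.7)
The three-way decomposition $\mathbb{G}^{t} = \mathbb{G}_{L;0}^{t} + \mathbb{G}_{L;\perp}^{t} + \mathbb{G}_{S}^{t}$ naturally splits the proof into three independent pieces, each requiring a different technique.

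For the fluid long-wave bound \eqref{long-fluid}, my plan is to exploit the explicit spectral representation
\[
\widehat{\mathbb{G}}_{L;0}(\eta, t) = \chi_{\delta}(\eta) \sum_{j=0}^{4} e^{\lambda_{j}(\eta) t}\, |e_{j}(\eta)\rangle\langle e_{j}(\eta)|
\]
with $\lambda_{j}(\eta) = -i a_{j}|\eta| - A_{j}|\eta|^{2} + O(|\eta|^{3})$. The $L_{x}^{2}L_{\xi}^{2} \to L_{x}^{2}L_{\xi}^{2}$ bound follows immediately from Plancherel and the uniform operator-norm bound $\|\widehat{\mathbb{G}}_{L;0}(\eta, t)\| \leq 1$, with the spatial derivative providing $|\eta|^{|\alpha|} \chi_{\delta}(\eta) \cdot e^{-A_{j}|\eta|^{2} t}$ whose $L^{\infty}_{\eta}$ norm is $(1+t)^{-|\alpha|/2}$. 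The $L_{x}^{1}L_{\xi}^{2} \to L_{x}^{\infty}L_{\xi}^{2}$ endpoint with decay $(1+t)^{-3/2-|\alpha|/2}$ follows either from the space-time pointwise kernel bound displayed just before the proposition, or more directly from $\sup_{x}\|\partial_{x}^{\alpha}\mathbb{G}_{L;0}^{t}(x,t)\| \leq \int |\eta|^{|\alpha|} |\widehat{\mathbb{G}}_{L;0}(\eta,t)|\,d\eta \lesssim (1+t)^{-3/2-|\alpha|/2}$. The full range $1\leq p \leq q \leq \infty$ is then obtained by Riesz-Thorin interpolation between these two endpoints. Alternatively, one can use the pointwise wave-structure bound and apply Young's inequality with $1/r = 1 - 1/p + 1/q$, computing the $L_{x}^{r}$ norm of each piece via rescaling $x \mapsto \sqrt{1+t}\, y$ for the heat-type terms and spherical coordinates centered on the shell $|x| = \mathbf{c} t$ for the Huygens term.

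For the non-fluid long-wave estimate \eqref{long-nonfluid} and the short-wave estimate \eqref{short}, the strategy is a uniform spectral-gap argument in Fourier space. By the standard spectral analysis of $-i\xi\cdot\eta + \mathcal{L}$ (Ellis-Pinsky, Liu-Yu), the part of the spectrum remaining after removing the five fluid eigenvalues stays strictly inside a fixed half-plane $\{\operatorname{Re} z \leq -2/c\}$ uniformly for $|\eta| \leq \delta$; and for $|\eta| \geq \delta/2$ the entire spectrum of $-i\xi\cdot\eta + \mathcal{L}$ lies in the same half-plane. This yields uniform-in-$\eta$ operator-norm bounds $\|\widehat{\mathbb{G}}_{L;\perp}(\eta, t)\|, \|\widehat{\mathbb{G}}_{S}(\eta, t)\| \lesssim e^{-t/c}$ on $L_{\xi}^{2}$. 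Plancherel in $x$ then converts these into the claimed $L_{x}^{2}L_{\xi}^{2}$ decay; the spatial derivative produces a factor $|\eta|^{|\alpha|}$ that is absorbed by $\chi_{\delta}(\eta)$ in the first case and by the strong exponential decay in the second.

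For the refined estimate under $\mathrm{P}_{0}u_{0} = 0$, the key observation is the expansion $e_{j}(\eta) = E_{j} + O(|\eta|)$ with $E_{j} \in \mathrm{Ker}(\mathcal{L})$. The hypothesis forces $\langle E_{j}, \widehat{u_{0}}(\eta)\rangle_{\xi} = 0$ for every $\eta$, hence
\[
\langle e_{j}(\eta), \widehat{u_{0}}(\eta)\rangle_{\xi} = \langle e_{j}(\eta) - E_{j}, \widehat{u_{0}}(\eta)\rangle_{\xi} = O(|\eta|)\, |\widehat{u_{0}}(\eta)|_{L_{\xi}^{2}}.
\]
Inserting this extra factor of $|\eta|$ into the Fourier representation and combining with $e^{-A_{j}|\eta|^{2} t}$ contributes exactly an additional $(1+t)^{-1/2}$ decay (equivalent to applying one more $\partial_{x}$ derivative to the long-wave kernel in the pointwise picture). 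The main obstacle I anticipate is the fluid long-wave $L^{p}$–$L^{q}$ estimate in the off-diagonal range, especially controlling the Huygens piece concentrated on the expanding shell: balancing the $(1+t)^{-2}$ amplitude, the shell thickness $\sqrt{1+t}$, and the $(\mathbf{c} t)^{2}$ surface area across all admissible $r$ is the delicate part; once that kernel calculation (or, more cleanly, the Fourier-side interpolation between $L^{2}$ and $L^{1}$–$L^{\infty}$ endpoints) is in hand, the remaining parts are routine consequences of the standard spectral picture.
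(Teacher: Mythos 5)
Your handling of \eqref{long-nonfluid} and \eqref{short} (uniform spectral gap plus Plancherel, the factor $|\eta|^{|\alpha|}$ being absorbed by $\chi_{\delta}$), and of the extra $(1+t)^{-1/2}$ under $\mathrm{P}_{0}u_{0}=0$ via $e_{j}(\eta)=E_{j}+O(|\eta|)$, is correct and is essentially the reasoning the paper itself relies on (the paper states Proposition \ref{Prop-LS} directly from the quoted pointwise wave structure and the spectral-gap remark, without a written proof).

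The genuine gap is in \eqref{long-fluid}, precisely at the step you defer. First, Riesz--Thorin between your two endpoints $L^{2}_{x}L^{2}_{\xi}\to L^{2}_{x}L^{2}_{\xi}$ and $L^{1}_{x}L^{2}_{\xi}\to L^{\infty}_{x}L^{2}_{\xi}$ only yields exponents on the dual line $\frac{1}{p}+\frac{1}{q}=1$, not the full range $1\le p\le q\le\infty$ asserted in the proposition; to reach $p=q$ (more generally $\frac{1}{p}-\frac{1}{q}<\frac12$) you would in addition need $t$-uniform $L^{1}\to L^{1}$ and $L^{\infty}\to L^{\infty}$ bounds, which your argument does not supply. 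Second, the Young's-inequality route does not supply them either: with $\frac1r=1-\frac1p+\frac1q$, the Huygens piece of the quoted structure has $\bigl\Vert (1+t)^{-2}B_{N}(|x|-\mathbf{c}t,t)\bigr\Vert_{L^{r}_{x}}\sim (1+t)^{-2+\frac{5}{2r}}$ (amplitude $(1+t)^{-2}$, shell area $\sim t^{2}$, thickness $\sim\sqrt{1+t}$), which is strictly weaker than the asserted heat rate $(1+t)^{-\frac32(\frac1p-\frac1q)}=(1+t)^{-\frac32+\frac{3}{2r}}$ whenever $r<2$, i.e.\ whenever $\frac1p-\frac1q<\frac12$; at $r=1$ it even grows like $(1+t)^{1/2}$, and the term $\mathbf{1}_{\{|x|\le \mathbf{c}t\}}(1+t)^{-3/2}B_{3/2}(|x|,t)$ contributes an additional logarithm at $r=1$. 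So neither of your two mechanisms proves \eqref{long-fluid} in the regime $\frac1p-\frac1q<\frac12$ with $(p,q)\neq(2,2)$ --- and this is exactly the regime used later in the paper (e.g.\ $q=\infty$, $p>3$ in the proof of Proposition \ref{Prop-X1}, estimate \eqref{GL-0-L}). Since for $q=\infty$ the convolution bound is essentially saturated by positive kernels, no more careful bookkeeping with the quoted upper bounds alone can recover the stated rate there: you must either restrict the range of exponents (your argument does prove the dual line and the region $\frac1p-\frac1q\ge\frac12$) or invoke finer information on the acoustic modes than the pointwise majorant provides, and this needs to be stated and justified explicitly rather than dismissed as routine.
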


In view of $\left( \ref{long-nonfluid}\right) $, we have%
\begin{equation*}
\left \Vert \mathbb{G}_{L;\bot }^{t}u_{0}\right \Vert _{L_{x}^{r}L_{\xi
}^{2}}\leq Ce^{-\frac{t}{c}}\left \Vert u_{0}\right \Vert _{L_{x}^{2}L_{\xi
}^{2}}
\end{equation*}%
for $2\leq r\leq \infty $. However, we have the $L_{x}^{2}$ estimate for $%
\mathbb{G}_{S}^{t}u_{0}$ only. In order to obtain the $L_{x}^{\infty }$
estimate, we apply the singular-regular decomposition as those in \cite%
{[LinWangWu], [LiuYu]}: We denote the solution operator of the damped
transport equation
\begin{equation*}
\left \{
\begin{array}{l}
\partial _{t}h+\xi \cdot \nabla _{x}h+\nu (\xi )h=0\text{,} \\[2mm]
h(0,x,\xi )=h_{0}\text{,}%
\end{array}%
\right.
\end{equation*}%
by $\mathbb{S}^{t}$, i.e., $h(t)=\mathbb{S}^{t}h_{0}$. Then we design the
series as
\begin{equation*}
\mathbb{G}^{t}u_{0}=\sum_{j=0}^{m}u^{(j)}+R^{\left( m\right)
}=W^{(m)}+R^{\left( m\right) }\text{,}
\end{equation*}%
for some $m\in \mathbb{N}$ (precisely, $m\geq 6$), where $u^{\left( j\right)
}$ and $R^{\left( m\right) }$ are defined by%
\begin{equation*}
u^{(0)}=\mathbb{S}^{t}u_{0}\, \text{,}\quad u^{(j)}=\int_{0}^{t}\mathbb{S}%
^{t-\tau }Ku^{(j-1)}(\tau )d\tau \text{,}\quad 1\leq j\leq m\text{,}
\end{equation*}%
and
\begin{equation*}
R^{\left( m\right) }=\int_{0}^{t}\mathbb{G}^{t-\tau }Ku^{(m)}(\tau )d\tau
\text{.}
\end{equation*}

Combining the singular-regular decomposition with Proposition \ref{Prop-LS},
we are able to get the $L_{x}^{r}$ estimate of $\mathbb{G}^{t}u_{0}$ for $%
2\leq r\leq \infty $:

\begin{proposition}
\label{Prop-linear}Let $u$ be a solution of $\left( \ref{Linearied Bolt}%
\right) $ with the initial data $u_{0}$. Then
\begin{equation*}
\left \Vert \mathbb{G}^{t}u_{0}\right \Vert _{L_{x}^{r}L_{\xi }^{2}}\lesssim
\left( 1+t\right) ^{-\frac{3}{2}\left( \frac{1}{p}-\frac{1}{r}\right) }\left
\Vert u_{0}\right \Vert _{L_{x}^{p}L_{\xi }^{2}}+e^{-\frac{t}{c}}\left(
\left \Vert u_{0}\right \Vert _{L_{\xi }^{2}L_{x}^{2}}+\left \Vert
u_{0}\right \Vert _{L_{\xi }^{2}L_{x}^{2}}^{2/r}\left \Vert u_{0}\right
\Vert _{L_{\xi }^{2}L_{x}^{\infty }}^{1-2/r}\right) \text{,}
\end{equation*}%
\begin{equation*}
\left \Vert \mathbb{G}^{t}u_{0}\right \Vert _{L_{\xi ,\beta }^{\infty
}L_{x}^{r}}\lesssim \left( 1+t\right) ^{-\frac{3}{2}\left( \frac{1}{p}-\frac{%
1}{r}\right) }\left \Vert u_{0}\right \Vert _{L_{x}^{p}L_{\xi }^{2}}+e^{-%
\frac{t}{c}}\left( \left \Vert u_{0}\right \Vert _{L_{\xi ,\beta }^{\infty
}L_{x}^{r}}+\left \Vert u_{0}\right \Vert _{L_{\xi }^{2}L_{x}^{2}}+\left
\Vert u_{0}\right \Vert _{L_{\xi }^{2}L_{x}^{2}}^{2/r}\left \Vert
u_{0}\right \Vert _{L_{\xi }^{2}L_{x}^{\infty }}^{1-2/r}\right) \text{,}
\end{equation*}%
\begin{equation*}
\left \Vert \varpi \mathbb{G}^{t}u_{0}\right \Vert _{L_{\xi ,\beta }^{\infty
}L_{x}^{r}}\lesssim \left( 1+t\right) ^{-\frac{3}{2}\left( \frac{1}{p}-\frac{%
1}{r}\right) }\left \Vert u_{0}\right \Vert _{L_{x}^{p}L_{\xi }^{2}}+e^{-%
\frac{t}{c}}\left( \left \Vert \varpi u_{0}\right \Vert _{L_{\xi ,\beta
}^{\infty }L_{x}^{r}}+\left \Vert u_{0}\right \Vert _{L_{\xi
}^{2}L_{x}^{2}}+\left \Vert u_{0}\right \Vert _{L_{\xi
}^{2}L_{x}^{2}}^{2/r}\left \Vert u_{0}\right \Vert _{L_{\xi
}^{2}L_{x}^{\infty }}^{1-2/r}\right) \text{,}
\end{equation*}%
for $\beta \geq 0$, $2\leq r\leq \infty $ and $1\leq p\leq r$. Moreover, if $%
\mathrm{P}_{0}u_{0}=0$, then we will get extra $(1+t)^{-1/2}$ decay rate in
the above estimates.
\end{proposition}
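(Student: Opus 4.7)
The plan is to decompose $\mathbb{G}^{t}u_{0}=\mathbb{G}_{L;0}^{t}u_{0}+\mathbb{G}_{L;\perp}^{t}u_{0}+\mathbb{G}_{S}^{t}u_{0}$ and to handle each spectral piece separately. The fluid long-wave piece already has the right form: estimate \eqref{long-fluid} from Proposition \ref{Prop-LS} gives the polynomial decay $(1+t)^{-\frac{3}{2}(\frac{1}{p}-\frac{1}{r})}\|u_{0}\|_{L_{x}^{p}L_{\xi}^{2}}$, and the extra $(1+t)^{-1/2}$ factor under the cancellation condition $\mathrm{P}_{0}u_{0}=0$ comes from the corresponding refinement stated there. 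To pass from $L_{x}^{r}L_{\xi}^{2}$ to $L_{\xi,\beta}^{\infty}L_{x}^{r}$ (and to the $\varpi$-weighted version), I would exploit the explicit form of $\widehat{\mathbb{G}}_{L;0}$: the rank-one projectors $|e_{j}(\eta)\rangle\langle e_{j}(\eta)|$ are perturbations of constant projectors onto $\mathrm{Ker}(\mathcal{L})$, hence map $L_{\xi}^{2}$ into any weighted $L_{\xi}^{\infty}$ space, while the compact Fourier support given by $\chi_{\delta}(\eta)$ allows one to trade spatial integrability at the cost of a fixed constant. The non-fluid long-wave piece $\mathbb{G}_{L;\perp}^{t}u_{0}$ decays exponentially in $L_{x}^{2}L_{\xi}^{2}$ by \eqref{long-nonfluid}, and since it is Fourier-localized on $|\eta|\le\delta$ with $\delta$ fixed, Bernstein-type inequalities upgrade this to any $L_{x}^{r}$ with a single fixed constant, producing the $e^{-t/c}\|u_{0}\|_{L_{\xi}^{2}L_{x}^{2}}$ contribution in all three estimates.

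The real work is the short-wave piece $\mathbb{G}_{S}^{t}u_{0}$, for which Proposition \ref{Prop-LS} only provides the $L_{x}^{2}L_{\xi}^{2}$ bound \eqref{short}. I would apply the Liu--Yu singular-regular scheme recalled in the excerpt to $\mathbb{G}_{S}^{t}u_{0}=\mathbb{G}^{t}u_{0S}$: fix $m\ge 6$ and write $\mathbb{G}^{t}u_{0S}=W^{(m)}+R^{(m)}$ with $W^{(m)}=\sum_{j=0}^{m}u^{(j)}$. For the partial sum $W^{(m)}$ one uses the explicit formula $\mathbb{S}^{t}f(x,\xi)=e^{-\nu(\xi)t}f(x-\xi t,\xi)$, which preserves every $L_{x}^{r}$ norm and provides pointwise velocity damping $e^{-\nu_{0}\langle\xi\rangle^{\gamma}t}$, combined with the $K$-mapping estimates in Lemma \ref{basic2} (the weighted version \eqref{kaK-Lp} for the $\varpi$-weighted case). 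Iterating $m$ times, the polynomial $\langle\xi\rangle^{\beta}$ weight is absorbed into the velocity exponential and one obtains
\[
\|W^{(m)}\|_{L_{\xi,\beta}^{\infty}L_{x}^{r}}+\|W^{(m)}\|_{L_{x}^{r}L_{\xi}^{2}}\lesssim e^{-ct}\|u_{0}\|_{L_{\xi,\beta}^{\infty}L_{x}^{r}},
\]
and the analogous bound with $\varpi$ in place.

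For the remainder $R^{(m)}(t)=\int_{0}^{t}\mathbb{G}^{t-\tau}Ku^{(m)}(\tau)\,d\tau$, I split $\mathbb{G}^{t-\tau}$ into its three spectral pieces once more. After $m$ rounds of $K$ the function $u^{(m)}(\tau)$ has gained enough velocity weight that the Grad-type bound $|Kg|_{L_{\xi,\tau-\gamma+3/2}^{\infty}}\lesssim|g|_{L_{\xi,\tau}^{2}}$ from Lemma \ref{basic2} controls $Ku^{(m)}(\tau)$ in both $L_{\xi}^{2}L_{x}^{2}$ and $L_{\xi}^{2}L_{x}^{\infty}$, with exponential damping in $\tau$. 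Plugging these into \eqref{long-fluid}, \eqref{long-nonfluid} and \eqref{short} (and reusing the short-wave $L_{x}^{\infty}$ control on $W^{(m)}$ for the short-wave convolution) yields an $L_{x}^{2}L_{\xi}^{2}$ bound and an $L_{x}^{\infty}L_{\xi}^{2}$ bound on $R^{(m)}$, both exponentially decaying. Riesz--Thorin interpolation in the $x$-variable between these two endpoints produces exactly the geometric-mean quantity $\|u_{0}\|_{L_{\xi}^{2}L_{x}^{2}}^{2/r}\|u_{0}\|_{L_{\xi}^{2}L_{x}^{\infty}}^{1-2/r}$ of the statement. Combining the long-wave and $W^{(m)}$ contributions closes all three estimates; the weighted version is obtained by replacing $K$ with $\varpi K\varpi^{-1}$ throughout and invoking \eqref{kaK-Lp}, and the $\mathrm{P}_{0}u_{0}=0$ improvement propagates from the corresponding refinement of \eqref{long-fluid} since every other piece is already exponentially small. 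The principal obstacle is the self-referential flavor of the short-wave $L_{x}^{\infty}$ estimate: extracting $L_{x}^{\infty}$ exponential decay out of a spectral bound that lives only in $L_{x}^{2}$ forces $m$ to be chosen large enough that the velocity weight gained through the $m$ applications of $K$ (a factor $\langle\xi\rangle^{-(2-\gamma)}$ per step, by \eqref{K-Lp}) dominates $\langle\xi\rangle^{\beta}$ and overcomes the growth of $\nu(\xi)$, so that the singular-regular iteration truly closes.
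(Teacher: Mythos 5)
Your overall architecture (spectral decomposition plus the Liu--Yu wave decomposition $\mathbb{G}^{t}=W^{(m)}+R^{(m)}$ and Grad's estimates for $K$) is the right family of tools, but the proposal has a genuine gap exactly at the step you yourself flag as the ``principal obstacle'': converting the purely $L_{x}^{2}L_{\xi}^{2}$ spectral information on the short-wave part into $L_{x}^{\infty}$ (hence $L_{x}^{r}$, $r>2$) exponential decay. Your proposed resolution --- take $m$ large so that repeated applications of $K$ gain velocity weight dominating $\langle\xi\rangle^{\beta}$ --- acts on the wrong variable: velocity weights do nothing to improve spatial integrability, so the iteration does not close. Your alternative device, re-expanding $R^{(m)}=\int_{0}^{t}\mathbb{G}^{t-\tau}Ku^{(m)}(\tau)\,d\tau$ into the three spectral pieces, is also defective as stated: the $\mathbb{G}_{S}^{t-\tau}$ contribution needs precisely the short-wave $L_{x}^{\infty}$ bound you are trying to prove (the parenthetical ``reuse the $L_{x}^{\infty}$ control on $W^{(m)}$'' does not apply, since the short wave of the Duhamel term is not $W^{(m)}$), and the $\mathbb{G}_{L;0}^{t-\tau}$ contribution, estimated through \eqref{long-fluid}, only yields polynomial decay after the time convolution, not the claimed exponential decay (one would have to exploit the annular frequency support $\delta/2\leq|\eta|\leq\delta$ of $Ku^{(m)}$, which you do not do). The paper's mechanism is different and is the missing ingredient: the Mixture Lemma gives a gain of two $x$-derivatives for the remainder, $\lVert R^{(m)}\rVert_{L_{\xi}^{2}H_{x}^{2}}\lesssim\lVert u_{0}\rVert_{L_{\xi}^{2}L_{x}^{2}}$ (bounded, not decaying), and then the Sobolev-type interpolation
\begin{equation*}
\left\Vert \mathbb{G}_{S}^{t}u_{0}-W^{(m)}\right\Vert_{L_{x}^{\infty}L_{\xi}^{2}}\lesssim \left\Vert \mathbb{G}_{S}^{t}u_{0}-W^{(m)}\right\Vert_{L_{\xi}^{2}L_{x}^{2}}^{3/4}\left\Vert \mathbb{G}_{S}^{t}u_{0}-W^{(m)}\right\Vert_{L_{\xi}^{2}H_{x}^{2}}^{1/4}
\end{equation*}
marries the exponentially decaying $L_{x}^{2}$ factor with the merely bounded $H_{x}^{2}$ factor to produce exponential $L_{x}^{\infty}$ decay, and interpolation with \eqref{short} gives \eqref{short-r-out} with the geometric-mean term. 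No such $x$-regularity gain appears anywhere in your plan.

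A secondary gap concerns the velocity-weighted estimates. You treat the pieces separately and only explain the $L_{\xi,\beta}^{\infty}$ upgrade for the fluid part (via the eigenprojectors) and for $W^{(m)}$; the non-fluid long-wave piece $\mathbb{G}_{L;\perp}^{t}u_{0}$ is left with only $L_{x}^{2}L_{\xi}^{2}$ control, and Bernstein in $x$ cannot produce an $L_{\xi,\beta}^{\infty}$ bound in the velocity variable --- indeed the paper emphasizes that the spectral information for this piece is insufficient for pointwise-in-$\xi$ bounds. The paper instead proves the weighted estimates for the \emph{whole} solution at once by bootstrapping through the integral equation $u=\mathbb{S}^{t}u_{0}+\int_{0}^{t}\mathbb{S}^{t-\tau}Ku(\tau)\,d\tau$, using the Grad bound $|Kg|_{L_{\xi,\tau-\gamma+3/2}^{\infty}}\lesssim|g|_{L_{\xi,\tau}^{2}}$ together with the already-proved unweighted $L_{x}^{r}L_{\xi}^{2}$ estimate, and for the $\varpi$-weight a splitting of velocities into $|\xi|\leq\theta$ and $|\xi|>\theta$ with \eqref{kaK-Lp} and absorption after choosing $\theta$ large. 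You should adopt this global bootstrap (or supply an equivalent mechanism) rather than attempting weighted bounds piece by piece; the $\mathrm{P}_{0}u_{0}=0$ improvement then indeed propagates from Proposition \ref{Prop-LS} as you say.
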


\begin{proof}
By Lemma \ref{basic2},
\begin{equation}
\left \Vert W^{(m)}\right \Vert _{L_{\xi }^{q}L_{x}^{p}}\lesssim e^{-\frac{t%
}{c}}\left \Vert u_{0}\right \Vert _{L_{\xi }^{q}L_{x}^{p}}\text{, }1\leq p%
\text{\thinspace , }q\leq \infty \text{,}  \label{Wave-pq}
\end{equation}%
\begin{equation}
\left \Vert W^{(m)}\right \Vert _{L_{x}^{\infty }L_{\xi }^{q}}\lesssim e^{-%
\frac{t}{c}}\left \Vert u_{0}\right \Vert _{L_{\xi }^{q}L_{x}^{\infty }}%
\text{, }1\leq q\leq \infty \text{.}  \label{Wave-sup-q}
\end{equation}%
Utilizing the Mixture Lemma \cite{[LinWangWu], [LiuYu]} yields
\begin{equation}
\left \Vert R^{\left( m\right) }\right \Vert _{H_{x}^{2}L_{\xi }^{2}}=\left
\Vert R^{\left( m\right) }\right \Vert _{L_{\xi }^{2}H_{x}^{2}}\lesssim
\left \Vert u_{0}\right \Vert _{L_{\xi }^{2}L_{x}^{2}}\text{.}
\label{remainder}
\end{equation}%
Note that $\mathbb{G}^{t}u_{0}=\mathbb{G}_{L;0}^{t}u_{0}+\mathbb{G}_{L;\bot
}^{t}u_{0}+\mathbb{G}_{S}^{t}u_{0}=W^{(m)}+R^{\left( m\right) }$. In light
of Proposition \ref{Prop-LS}, $\left( \ref{Wave-pq}\right) $ and $\left( \ref%
{remainder}\right) $, we find
\begin{eqnarray*}
&&\left \Vert \mathbb{G}_{S}^{t}u_{0}-W^{\left( m\right) }\right \Vert
_{L_{x}^{\infty }L_{\xi }^{2}}\,,\left \Vert \mathbb{G}_{S}^{t}u_{0}-W^{%
\left( m\right) }\right \Vert _{L_{\xi }^{2}L_{x}^{\infty }} \\
&\lesssim &\left \Vert \mathbb{G}_{S}^{t}u_{0}-W^{\left( m\right) }\right
\Vert _{L_{\xi }^{2}L_{x}^{2}}^{3/4}\left \Vert \mathbb{G}%
_{S}^{t}u_{0}-W^{\left( m\right) }\right \Vert _{L_{\xi }^{2}H_{x}^{2}}^{1/4}
\\
&\lesssim &\left \Vert \mathbb{G}_{S}^{t}u_{0}-W^{\left( m\right) }\right
\Vert _{L_{\xi }^{2}L_{x}^{2}}^{3/4}\left \Vert \left( \mathbb{G}%
_{L;0}^{t}u_{0}+\mathbb{G}_{L;\bot }^{t}u_{0}\right) -R^{\left( m\right)
}\right \Vert _{L_{\xi }^{2}H_{x}^{2}}^{1/4}\lesssim e^{-\frac{t}{c}}\left
\Vert u_{0}\right \Vert _{L_{\xi }^{2}L_{x}^{2}}\text{,}
\end{eqnarray*}%
and so
\begin{equation*}
\left \Vert \mathbb{G}_{S}^{t}u_{0}\right \Vert _{L_{x}^{\infty }L_{\xi
}^{2}}\leq \left \Vert \mathbb{G}_{S}^{t}u_{0}-W^{\left( m\right) }\right
\Vert _{L_{x}^{\infty }L_{\xi }^{2}}+\left \Vert W^{\left( m\right) }\right
\Vert _{L_{x}^{\infty }L_{\xi }^{2}}\lesssim e^{-\frac{t}{c}}\left( \left
\Vert u_{0}\right \Vert _{L_{\xi }^{2}L_{x}^{2}}+\left \Vert u_{0}\right
\Vert _{L_{\xi }^{2}L_{x}^{\infty }}\right)
\end{equation*}%
due to $\left( \ref{Wave-sup-q}\right) $. By the interpolation with $\left( %
\ref{short}\right) $,%
\begin{equation}
\left \Vert \mathbb{G}_{S}^{t}u_{0}\right \Vert _{L_{x}^{r}L_{\xi
}^{2}}\lesssim e^{-\frac{t}{c}}\left( \left \Vert u_{0}\right \Vert _{L_{\xi
}^{2}L_{x}^{2}}+\left \Vert u_{0}\right \Vert _{L_{\xi
}^{2}L_{x}^{2}}^{2/r}\left \Vert u_{0}\right \Vert _{L_{\xi
}^{2}L_{x}^{\infty }}^{1-2/r}\right)  \label{short-r-out}
\end{equation}%
for $2\leq r\leq \infty $. Together with $\left( \ref{long-fluid}\right) $-$%
\left( \ref{long-nonfluid}\right) $, we conclude
\begin{equation}
\left \Vert \mathbb{G}^{t}u_{0}\right \Vert _{L_{x}^{r}L_{\xi }^{2}}\lesssim
\left( 1+t\right) ^{-\frac{3}{2}\left( \frac{1}{p}-\frac{1}{r}\right) }\left
\Vert u_{0}\right \Vert _{L_{x}^{p}L_{\xi }^{2}}+e^{-\frac{t}{c}}\left(
\left \Vert u_{0}\right \Vert _{L_{\xi }^{2}L_{x}^{2}}+\left \Vert
u_{0}\right \Vert _{L_{\xi }^{2}L_{x}^{2}}^{2/r}\left \Vert u_{0}\right
\Vert _{L_{\xi }^{2}L_{x}^{\infty }}^{1-2/r}\right)  \label{Linear-r-out}
\end{equation}%
for $2\leq r\leq \infty $ and $1\leq p\leq r$.

Next we will derive the weighted estimate. In view of $\left( \ref{Linearied
Bolt}\right) $,
\begin{equation*}
u=\mathbb{G}^{t}u_{0}=\mathbb{S}^{t}u_{0}+\int_{0}^{t}\mathbb{S}^{t-\tau
}Ku\left( \tau \right) d\tau \text{,}
\end{equation*}%
and then
\begin{eqnarray*}
\left \vert u\right \vert _{L_{x}^{r}} &\leq &\left \vert \mathbb{S}%
^{t}u_{0}\right \vert _{L_{x}^{r}}+\int_{0}^{t}\left \vert \mathbb{S}%
^{t-\tau }Ku\left( \tau \right) \right \vert _{L_{x}^{r}}d\tau \\
&\leq &e^{-\frac{t}{c}}\left \Vert u_{0}\right \Vert _{L_{\xi }^{\infty
}L_{x}^{r}}+\int_{0}^{t}e^{-\frac{t-\tau }{c}}\left \Vert Ku\left( \tau
\right) \right \Vert _{L_{\xi }^{\infty }L_{x}^{r}}d\tau \\
&\lesssim &e^{-\frac{t}{c}}\left \Vert u_{0}\right \Vert _{L_{\xi }^{\infty
}L_{x}^{r}}+\int_{0}^{t}e^{-\frac{t-\tau }{c}}\left \Vert u\left( \tau
\right) \right \Vert _{L_{x}^{r}L_{\xi }^{2}}d\tau \\
&\lesssim &e^{-\frac{t}{c}}\left \Vert u_{0}\right \Vert _{L_{\xi }^{\infty
}L_{x}^{r}}+\left( 1+t\right) ^{-\frac{3}{2}\left( \frac{1}{p}-\frac{1}{r}%
\right) }\left \Vert u_{0}\right \Vert _{L_{x}^{p}L_{\xi }^{2}}+e^{-\frac{t}{%
c}}\left( \left \Vert u_{0}\right \Vert _{L_{\xi }^{2}L_{x}^{2}}+\left \Vert
u_{0}\right \Vert _{L_{\xi }^{2}L_{x}^{2}}^{2/r}\left \Vert u_{0}\right
\Vert _{L_{\xi }^{2}L_{x}^{\infty }}^{1-2/r}\right)
\end{eqnarray*}%
for $2\leq r\leq \infty $ and $1\leq p\leq r$, by using $\left( \ref%
{Linear-r-out}\right) $. Through the bootstrap argument,
\begin{equation}
\left \Vert u\right \Vert _{L_{\xi ,\beta }^{\infty }L_{x}^{r}}\lesssim
\left( 1+t\right) ^{-\frac{3}{2}\left( \frac{1}{p}-\frac{1}{r}\right) }\left
\Vert u_{0}\right \Vert _{L_{x}^{p}L_{\xi }^{2}}+e^{-\frac{t}{c}}\left(
\left \Vert u_{0}\right \Vert _{L_{\xi ,\beta }^{\infty }L_{x}^{r}}+\left
\Vert u_{0}\right \Vert _{L_{\xi }^{2}L_{x}^{2}}+\left \Vert u_{0}\right
\Vert _{L_{\xi }^{2}L_{x}^{2}}^{2/r}\left \Vert u_{0}\right \Vert _{L_{\xi
}^{2}L_{x}^{\infty }}^{1-2/r}\right) \text{,}  \label{u-r-in}
\end{equation}%
for $\beta \geq 0$.

Furthermore,
\begin{eqnarray*}
\left \vert \varpi \left( \xi \right) \left \langle \xi \right \rangle ^{\beta
}u\right \vert _{L_{x}^{r}} &=&\left \vert \varpi \left( \xi \right)
\left \langle \xi \right \rangle ^{\beta }\mathbb{S}^{t}u_{0}+\int_{0}^{t}%
\varpi \left( \xi \right) \left \langle \xi \right \rangle ^{\beta }\mathbb{S}%
^{t-\tau }Ku\left( \tau \right) d\tau \right \vert _{L_{x}^{r}} \\
&\leq &\varpi \left( \xi \right) \left \langle \xi \right \rangle ^{\beta
}\left \vert \mathbb{S}^{t}u_{0}\right \vert _{L_{x}^{r}}+\int_{0}^{t}\varpi
\left( \xi \right) \left \langle \xi \right \rangle ^{\beta }\left \vert
\mathbb{S}^{t-\tau }Ku\left( \tau \right) \right \vert _{L_{x}^{r}}d\tau  \\
&\equiv &\left( I\right) +\left( II\right) \text{.}
\end{eqnarray*}%
It readily follows that
\begin{equation*}
\left( I\right) \lesssim e^{-\frac{t}{c}}\left \Vert \varpi u_{0}\right \Vert
_{L_{\xi ,\beta }^{\infty }L_{x}^{r}}\text{.}
\end{equation*}%
For $\left( II\right) $, we split the integrand into two parts $\left \vert
\xi \right \vert \leq \theta $ and $\left \vert \xi \right \vert >\theta $,
where $\theta >0$ will be determined later. That is, for any $\beta \geq 0$,%
\begin{eqnarray*}
&&\varpi \left( \xi \right) \left \langle \xi \right \rangle ^{\beta
}\left \vert \mathbb{S}^{t-\tau }Ku\left( \tau \right) \right \vert
_{L_{x}^{r}} \\
&\leq &e^{-\nu \left( \xi \right) \left( t-\tau \right) }\left[
\sup_{\left \vert \xi \right \vert \leq \theta }\varpi \left( \xi \right)
\left \langle \xi \right \rangle ^{\beta }\left \vert Ku\left( \tau \right)
\right \vert _{L_{x}^{r}}+\sup_{\left \vert \xi \right \vert >\theta }\varpi
\left( \xi \right) \left \langle \xi \right \rangle ^{\beta }\left \vert
Ku\left( \tau \right) \right \vert _{L_{x}^{r}}\right]  \\
&\lesssim &e^{-\frac{t-\tau }{c}}\left[ e^{c_{1}\left \vert \theta
\right \vert ^{2}}\left \Vert u\left( \tau \right) \right \Vert _{L_{\xi ,\beta
}^{\infty }L_{x}^{r}}+\left( 1+\theta \right) ^{\gamma -2}\left \Vert \varpi
Ku\left( \tau \right) \right \Vert _{L_{\xi ,2-\gamma +\beta }^{\infty
}L_{x}^{r}}\right]  \\
&\lesssim &e^{-\frac{t-\tau }{c}}\left[ e^{c_{1}\left \vert \theta
\right \vert ^{2}}\left \Vert u\left( \tau \right) \right \Vert _{L_{\xi ,\beta
}^{\infty }L_{x}^{r}}+\left( 1+\theta \right) ^{\gamma -2}\left \Vert \varpi
K\varpi ^{-1}\varpi u\left( \tau \right) \right \Vert _{L_{\xi ,2-\gamma
+\beta }^{\infty }L_{x}^{r}}\right]  \\
&\lesssim &e^{-\frac{t-\tau }{c}}\left[ e^{c_{1}\left \vert \theta
\right \vert ^{2}}\left \Vert u\left( \tau \right) \right \Vert _{L_{\xi ,\beta
}^{\infty }L_{x}^{r}}+\left( 1+\theta \right) ^{\gamma -2}\left \Vert \varpi
u\left( \tau \right) \right \Vert _{L_{\xi ,\beta }^{\infty }L_{x}^{r}}\right]
\end{eqnarray*}%
by $\left( \ref{kaK-Lp}\right) $ and so%
\begin{equation*}
\left( II\right) \lesssim \int_{0}^{t}e^{-\frac{t-\tau }{c}}\left[
e^{c_{2}\left \vert \theta \right \vert ^{2}}\left \Vert u\left( \tau \right)
\right \Vert _{L_{\xi ,\beta }^{\infty }L_{x}^{r}}+\left( 1+\theta \right)
^{\gamma -2}\left \Vert \varpi u\left( \tau \right) \right \Vert _{L_{\xi
,\beta }^{\infty }L_{x}^{r}}\right] d\tau \text{.}
\end{equation*}%
From $\left( \ref{u-r-in}\right) $ it follows that
\begin{align*}
& \quad \left \Vert \varpi u\left( t\right) \right \Vert _{L_{\xi ,\beta
}^{\infty }L_{x}^{r}} \\
& \lesssim e^{-\frac{t}{c}}\left[ \left \Vert \varpi u_{0}\right \Vert
_{L_{\xi ,\beta }^{\infty }L_{x}^{r}}+C\left( \theta \right) \left(
\left \Vert u_{0}\right \Vert _{L_{\xi ,\beta }^{\infty }L_{x}^{r}}+\left \Vert
u_{0}\right \Vert _{L_{\xi }^{2}L_{x}^{2}}+\left \Vert u_{0}\right \Vert
_{L_{\xi }^{2}L_{x}^{2}}^{2/r}\left \Vert u_{0}\right \Vert _{L_{\xi
}^{2}L_{x}^{\infty }}^{1-2/r}\right) \right]  \\
& \quad +C\left( \theta \right) \left( 1+t\right) ^{-\frac{3}{2}\left( \frac{%
1}{p}-\frac{1}{r}\right) }\left \Vert u_{0}\right \Vert _{L_{x}^{p}L_{\xi
}^{2}}+\left( 1+\theta \right) ^{\gamma -2}\int_{0}^{t}e^{-\frac{t-\tau }{c}%
}\left \Vert \varpi u\left( \tau \right) \right \Vert _{L_{\xi ,\beta
}^{\infty }L_{x}^{r}}d\tau \text{.}
\end{align*}%
After $\theta $ is chosen sufficiently large,
\begin{align*}
& \lVert \varpi u\left( t\right) \rVert _{L_{\xi ,\beta }^{\infty }L_{x}^{r}}
\\
& \lesssim \left( 1+t\right) ^{-\frac{3}{2}\left( \frac{1}{p}-\frac{1}{r}%
\right) }\left \Vert u_{0}\right \Vert _{L_{x}^{p}L_{\xi }^{2}}+e^{-t/c}\left(
\left \Vert \varpi u_{0}\right \Vert _{L_{\xi ,\beta }^{\infty
}L_{x}^{r}}+\left \Vert u_{0}\right \Vert _{L_{\xi }^{2}L_{x}^{2}}+\left \Vert
u_{0}\right \Vert _{L_{\xi }^{2}L_{x}^{2}}^{2/r}\left \Vert u_{0}\right \Vert
_{L_{\xi }^{2}L_{x}^{\infty }}^{1-2/r}\right)
\end{align*}%
holds for $\beta \geq 0$, $2\leq r\leq \infty $ and $1\leq p\leq r$.
\end{proof}

In view of $\left( \ref{Eqn-long}\right) $, we see $u_{L}=\mathbb{G}%
^{t}u_{0L}$. Observe that%
\begin{eqnarray*}
\left \Vert u_{0L}\right \Vert _{L_{\xi ,\beta }^{\infty }L_{x}^{\infty }}
&=&\left \Vert \left( 2\pi \right) ^{-3}\int_{\mathbb{R}^{3}}e^{ix\cdot \eta
}\chi _{\delta }\left( \eta \right) \widehat{u}_{0}\left( \eta \right) d\eta
\right \Vert _{L_{\xi ,\beta }^{\infty }L_{x}^{\infty }} \\
&\leq &\left( 2\pi \right) ^{-3}\left \vert \int_{\left \vert \eta \right
\vert <\delta }\left \vert \chi _{\delta }\left( \eta \right) \widehat{u}%
_{0}\left( \eta \right) \right \vert d\eta \right \vert _{L_{\xi ,\beta
}^{\infty }}\lesssim \left \Vert u_{0L}\right \Vert _{L_{\xi ,\beta
}^{\infty }L_{x}^{2}}
\end{eqnarray*}%
and similarly for $\left \Vert \varpi u_{0L}\right \Vert _{L_{\xi ,\beta
}^{\infty }L_{x}^{\infty }}$. Hence $\left \Vert \varpi u_{0L}\right \Vert
_{L_{\xi ,\beta }^{\infty }L_{x}^{r}}\lesssim \left \Vert \varpi
u_{0L}\right \Vert _{L_{\xi ,\beta }^{\infty }L_{x}^{2}}$ and $\left \Vert
u_{0L}\right \Vert _{L_{\xi ,\beta }^{\infty }L_{x}^{r}}\lesssim \left \Vert
u_{0L}\right \Vert _{L_{\xi ,\beta }^{\infty }L_{x}^{2}}$ for $2\leq r\leq
\infty $. It immediately follows from Proposition \ref{Prop-linear} that

\begin{corollary}
\label{lemm-long}Let $u$ be a solution of $\left( \ref{Linearied Bolt}%
\right) $ with initial data $u_{0}$. Then $u_{L}=\mathbb{G}^{t}u_{0L}$
satisfies
\begin{equation*}
\left \Vert u_{L}\right \Vert _{L_{x}^{r}L_{\xi }^{2}}\lesssim \left(
1+t\right) ^{-\frac{3}{2}\left( \frac{1}{p}-\frac{1}{r}\right) }\left \Vert
u_{0L}\right \Vert _{L_{x}^{p}L_{\xi }^{2}}+e^{-\frac{t}{c}}\left( \left
\Vert u_{0L}\right \Vert _{L_{x}^{2}L_{\xi }^{2}}+\left \Vert u_{0L}\right
\Vert _{L_{x}^{2}L_{\xi }^{2}}^{2/r}\left \Vert u_{0L}\right \Vert _{L_{\xi
}^{2}L_{x}^{\infty }}^{1-2/r}\right) \text{,}
\end{equation*}%
\begin{equation*}
\left \Vert u_{L}\right \Vert _{L_{\xi ,\beta }^{\infty }L_{x}^{r}}\lesssim
\left( 1+t\right) ^{-\frac{3}{2}\left( \frac{1}{p}-\frac{1}{r}\right) }\left
\Vert u_{0L}\right \Vert _{L_{x}^{p}L_{\xi }^{2}}+e^{-\frac{t}{c}}\left
\Vert u_{0L}\right \Vert _{L_{\xi ,\beta }^{\infty }L_{x}^{2}}\text{,}
\end{equation*}%
\begin{equation*}
\left \Vert \varpi u_{L}\right \Vert _{L_{\xi ,\beta }^{\infty
}L_{x}^{r}}\lesssim \left( 1+t\right) ^{-\frac{3}{2}\left( \frac{1}{p}-\frac{
1}{r}\right) } \left \Vert u_{0L}\right \Vert _{L_{x}^{p}L_{\xi }^{2}} + e^{-%
\frac{t}{c}} \left \Vert \varpi u_{0L}\right \Vert _{L_{\xi ,\beta }^{\infty
}L_{x}^{2}} \text{,}
\end{equation*}%
for $\beta >3/2$, $2\leq r\leq \infty $ and $1\leq p\leq r$. Moreover, if $%
\mathrm{P}_{0}u_{0}=0$, then we will get extra $(1+t)^{-1/2}$ decay rate in
the above estimates.
\end{corollary}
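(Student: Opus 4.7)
The plan is to deduce this corollary directly from Proposition \ref{Prop-linear} with essentially no new work; the corollary is a restatement tailored to initial data supported in low frequencies. First I would observe that $u_L$ satisfies the same linearized equation (\ref{Eqn-long}) but with initial datum $u_{0L}$, so $u_L = \mathbb{G}^{t} u_{0L}$. Consequently the three estimates of Proposition \ref{Prop-linear} apply verbatim after replacing $u_0$ by $u_{0L}$ throughout.

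The key simplification that turns Proposition \ref{Prop-linear} into the sharper form stated here is the Bernstein-type observation already used just before the statement: since $\widehat{u_{0L}}(\eta) = \chi_\delta(\eta)\widehat{u_0}(\eta)$ vanishes for $|\eta|\geq \delta$, one has
\begin{equation*}
\|u_{0L}\|_{L^{\infty}_{\xi,\beta} L^{\infty}_{x}} \;\lesssim\; \|u_{0L}\|_{L^{\infty}_{\xi,\beta} L^{2}_{x}},
\end{equation*}
and the analogous bound for $\varpi u_{0L}$. Interpolating with the trivial $L^{2}_x$ bound gives $\|u_{0L}\|_{L^{\infty}_{\xi,\beta} L^{r}_{x}} \lesssim \|u_{0L}\|_{L^{\infty}_{\xi,\beta} L^{2}_{x}}$ for $2\leq r\leq \infty$, and similarly for the weighted version. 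Because $\beta > 3/2$, the inclusion $L^{\infty}_{\xi,\beta}\hookrightarrow L^{2}_{\xi}$ is continuous, so in addition
\begin{equation*}
\|u_{0L}\|_{L^{2}_{\xi} L^{r}_{x}} \;\lesssim\; \|u_{0L}\|_{L^{\infty}_{\xi,\beta} L^{2}_{x}}, \qquad 2\leq r\leq \infty.
\end{equation*}

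Plugging these bounds into the right-hand sides of Proposition \ref{Prop-linear} applied to $u_{0L}$, all of the mixed-norm quantities $\|u_{0}\|_{L^{\infty}_{\xi,\beta}L^{r}_{x}}$, $\|u_{0}\|_{L^{2}_{\xi}L^{2}_{x}}$ and $\|u_{0}\|_{L^{2}_{\xi}L^{\infty}_{x}}^{1-2/r}$ collapse to a single factor of $\|u_{0L}\|_{L^{\infty}_{\xi,\beta}L^{2}_{x}}$ (respectively $\|\varpi u_{0L}\|_{L^{\infty}_{\xi,\beta}L^{2}_{x}}$ in the weighted case), giving precisely the three displayed inequalities. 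For the \emph{moreover} statement, the projection $\mathrm{P}_{0}$ acts only in the $\xi$ variable while the long-wave cutoff $\chi_\delta$ acts only in the $x$-Fourier variable, so they commute; in particular $\mathrm{P}_{0}u_{0}=0$ forces $\mathrm{P}_{0}u_{0L}=0$, and the additional $(1+t)^{-1/2}$ factor from Proposition \ref{Prop-linear} is inherited.

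There is no genuine obstacle here: all the substantive analysis (spectral decomposition, wave-kernel $L^p$--$L^q$ bounds, singular-regular decomposition, bootstrap to weighted $L^\infty_\xi$) has already been carried out in Propositions \ref{Prop-LS} and \ref{Prop-linear}. The only point that must be checked carefully is the interpolation/embedding step, which requires $\beta > 3/2$ exactly as assumed.
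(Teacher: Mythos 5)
Your proposal is correct and follows essentially the same route as the paper: apply Proposition \ref{Prop-linear} with $u_{0L}$ in place of $u_0$, use the compact frequency support of $\widehat{u}_{0L}$ to get $\left\Vert u_{0L}\right\Vert_{L_{\xi,\beta}^{\infty}L_{x}^{r}}\lesssim\left\Vert u_{0L}\right\Vert_{L_{\xi,\beta}^{\infty}L_{x}^{2}}$ (and its weighted analogue), and absorb the $L_{\xi}^{2}$-based norms via the embedding $L_{\xi,\beta}^{\infty}\hookrightarrow L_{\xi}^{2}$ for $\beta>3/2$. Your additional remark that $\mathrm{P}_{0}$ commutes with the long-wave cutoff, so $\mathrm{P}_{0}u_{0}=0$ implies $\mathrm{P}_{0}u_{0L}=0$, correctly justifies the extra $(1+t)^{-1/2}$ decay, a point the paper leaves implicit.
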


On the other hand, in view of $\left( \ref{Eqn-short}\right) $,
\begin{equation*}
u_{S}=\mathbb{S}^{t}u_{0S}+\int_{0}^{t}\mathbb{S}^{t-\tau }Ku_{S}\left( \tau
\right) d\tau \text{.}
\end{equation*}%
Applying similar argument as those for $u$, together with $\left( \ref%
{short-r-out}\right) $, we get the $L_{x}^{r}$ estimate of the short wave $%
u_{S}$ for $2\leq r\leq \infty $ as well.

\begin{corollary}
\label{lemm-short}Let $u$ be a solution of $\left( \ref{Linearied Bolt}%
\right) $ with initial data $u_{0}$. Then%
\begin{equation*}
\left \Vert u_{S}\left( t\right) \right \Vert _{L_{\xi ,\beta }^{\infty
}L_{x}^{r}}\lesssim e^{-\frac{t}{c}}\left( \left \Vert u_{0S}\right \Vert
_{L_{\xi ,\beta }^{\infty }L_{x}^{r}}+\left \Vert u_{0}\right \Vert _{L_{\xi
}^{2}L_{x}^{2}}+\left \Vert u_{0}\right \Vert _{L_{\xi
}^{2}L_{x}^{2}}^{2/r}\left \Vert u_{0}\right \Vert _{L_{\xi
}^{2}L_{x}^{\infty }}^{1-2/r}\right) \text{,}
\end{equation*}%
\begin{equation*}
\left \Vert \varpi u_{S}\left( t\right) \right \Vert _{L_{\xi ,\beta
}^{\infty }L_{x}^{r}}\lesssim e^{-\frac{t}{c}}\left( \left \Vert \varpi
u_{0S}\right \Vert _{L_{\xi ,\beta }^{\infty }L_{x}^{r}}+\left \Vert
u_{0}\right \Vert _{L_{\xi }^{2}L_{x}^{2}}+\left \Vert u_{0}\right \Vert
_{L_{\xi }^{2}L_{x}^{2}}^{2/r}\left \Vert u_{0}\right \Vert _{L_{\xi
}^{2}L_{x}^{\infty }}^{1-2/r}\right) \text{,}
\end{equation*}%
for $\beta \geq 0$, $2\leq r\leq \infty $ and $1\leq p\leq r$.
\end{corollary}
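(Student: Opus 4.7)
The plan is to mimic the strategy used in the proof of Proposition \ref{Prop-linear}, with the $L_x^r L_\xi^2$ short-wave bound \eqref{short-r-out} now playing the role that Proposition \ref{Prop-LS} and \eqref{Linear-r-out} played there for the full solution operator. Since $\mathcal{L} = -\nu(\xi) + K$ and $u_S$ solves \eqref{Eqn-short}, Duhamel's principle along the transport characteristics gives the representation
\begin{equation*}
u_S(t) = \mathbb{S}^t u_{0S} + \int_0^t \mathbb{S}^{t-\tau} K u_S(\tau)\, d\tau,
\end{equation*}
so everything reduces to estimating these two pieces in $L_{\xi,\beta}^\infty L_x^r$.

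For the unweighted estimate, I would first note that $|\mathbb{S}^t u_{0S}|_{L_x^r} \leq e^{-\nu(\xi)t} |u_{0S}|_{L_x^r}$ together with the uniform lower bound $\nu(\xi) \geq \nu_0$ yields $\|\mathbb{S}^t u_{0S}\|_{L_{\xi,\beta}^\infty L_x^r} \lesssim e^{-t/c}\|u_{0S}\|_{L_{\xi,\beta}^\infty L_x^r}$. For the Duhamel term, use Minkowski, the bound $e^{-\nu(\xi)(t-\tau)} \lesssim e^{-(t-\tau)/c}$, and the $L_\xi^2 \to L_{\xi,\beta}^\infty$ smoothing of $K$ from Lemma \ref{basic2} to obtain $\|K u_S(\tau)\|_{L_{\xi,\beta}^\infty L_x^r} \lesssim \|u_S(\tau)\|_{L_x^r L_\xi^2}$. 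The latter is exactly what \eqref{short-r-out} controls by $e^{-\tau/c}$ times the data. Multiplying the two exponentials and integrating in $\tau$ recovers an overall $e^{-t/c}$ and completes the first bound.

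For the weighted version, the kernel bound must be applied with care because $\varpi(\xi)\langle \xi \rangle^\beta$ grows at infinity. Following the same velocity-splitting trick as in Proposition \ref{Prop-linear}, I would partition the $\xi$ domain into $\{|\xi| \leq \theta\}$ and $\{|\xi| > \theta\}$. On the inner region the weight only contributes a multiplicative constant $C(\theta)$, so the unweighted bound applies directly. On the outer region, invoke the weighted kernel estimate \eqref{kaK-Lp} to produce a small factor $(1+\theta)^{\gamma - 2}$. This yields a self-referential inequality whose integral term can be absorbed into the left-hand side by choosing $\theta$ large enough so that $(1+\theta)^{\gamma-2}$ dominates the $L^1_\tau$ weight of $e^{-(t-\tau)/c}$, completing the proof via the identical bootstrap that closes Proposition \ref{Prop-linear}.

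The only delicate point, rather than a genuine obstacle, is the bookkeeping: the data in the final bound must include the full initial datum $u_0$ (through $\|u_0\|_{L_\xi^2 L_x^2}$ and the interpolation quantity $\|u_0\|_{L_\xi^2 L_x^2}^{2/r}\|u_0\|_{L_\xi^2 L_x^\infty}^{1-2/r}$) rather than only $u_{0S}$. This dependence is inherited from \eqref{short-r-out}, whose $L_x^\infty$ endpoint was established by applying the Mixture Lemma to $\mathbb{G}_S^t u_0 - W^{(m)}$, a quantity which only decouples cleanly in terms of the full $u_0$. Once this dependence is propagated faithfully through the Duhamel iteration, no additional ideas are required.
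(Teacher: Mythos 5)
Your proposal is correct and is essentially the paper's own argument: the paper proves this corollary precisely by writing $u_{S}=\mathbb{S}^{t}u_{0S}+\int_{0}^{t}\mathbb{S}^{t-\tau}Ku_{S}(\tau)\,d\tau$ and repeating the proof of Proposition \ref{Prop-linear} with \eqref{short-r-out} in place of \eqref{Linear-r-out}, including the $|\xi|\leq\theta$ versus $|\xi|>\theta$ splitting and absorption for the weight $\varpi$, and your observation that the full datum $u_{0}$ (not just $u_{0S}$) must appear on the right is exactly how the statement reads. One small correction to your write-up: Lemma \ref{basic2} gives $|Kg|_{L^{\infty}_{\xi,3/2-\gamma}}\lesssim|g|_{L^{2}_{\xi}}$, not an $L^{2}_{\xi}\to L^{\infty}_{\xi,\beta}$ bound for arbitrary $\beta$, so for $\beta>3/2-\gamma$ the inequality $\left\Vert Ku_{S}\right\Vert_{L^{\infty}_{\xi,\beta}L^{r}_{x}}\lesssim\left\Vert u_{S}\right\Vert_{L^{r}_{x}L^{2}_{\xi}}$ is not a single application of that lemma; the polynomial weight is gained iteratively through \eqref{K-Lp} (each application of $K$ gains $2-\gamma$ powers of $\langle\xi\rangle$), which is exactly the bootstrap you already invoke — in the paper that bootstrap handles $\langle\xi\rangle^{\beta}$ while the $\theta$-splitting handles the exponential weight $\varpi$ — so this is bookkeeping rather than a gap.
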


\subsection{Estimates for the solution around $\mathcal{M}_{b}\label{Rmk for
fb}$}

In this section we mention that the foregoing results stated in Subsections %
\ref{collision} and \ref{refine} are also valid for the solution around $%
\mathcal{M}_{b}$, up to constants depending on $\rho $, $\lambda $ and $\mu $%
. To see this, write $F^{b}\left( t,x,\xi \right) =\sigma \widetilde{F}%
\left( \widetilde{t},\widetilde{x},\widetilde{\xi }\right) $ by the change
of variables
\begin{equation*}
\sigma =\rho \lambda ^{-3/2}\text{, }\widetilde{x}=x-\mu t\text{, }%
\widetilde{t}=\sqrt{\lambda }t\text{, }\widetilde{\xi }=\frac{\xi -\mu }{%
\sqrt{\lambda }}\text{,}
\end{equation*}%
and we discover
\begin{equation*}
\left \{
\begin{array}{l}
\partial _{\widetilde{t}}\widetilde{F}+\widetilde{\xi }\cdot \nabla _{%
\widetilde{x}}\widetilde{F}=\rho \sqrt{\lambda }^{(\gamma -1)}Q\left(
\widetilde{F},\widetilde{F}\right) =\widetilde{Q}\left( \widetilde{F},%
\widetilde{F}\right)
\vspace {3mm}
\\
\widetilde{F}\left( 0,\widetilde{x},\widetilde{\xi }\right) =\mathcal{M}_{a}(%
\widetilde{\xi })+\sqrt{\mathcal{M}_{a}(\widetilde{\xi })}\widetilde{f_{0}}%
\left( \widetilde{x},\widetilde{\xi }\right) \text{,}%
\end{array}%
\right.
\end{equation*}%
where $\widetilde{f_{0}}$ $\left( \widetilde{x},\widetilde{\xi }\right) =%
\frac{1}{\sqrt{\sigma }}f_{0}^{b}\left( \widetilde{x},\mu +\sqrt{\lambda }%
\widetilde{\xi }\right) $.

\section{Proof of the Main Theorem}

\label{sec:proof-main}

Now we turn to equation $\left( \ref{g-eqn1}\right) $. In view of (\ref%
{g-eqn1}), $g$ can be expressed by
\begin{equation}
g=\int_{0}^{t}\mathbb{G}^{t-s}\left \{ 2\Gamma \left( \frac{\sqrt{\mathcal{M}%
_{b}}}{\sqrt{\mathcal{M}_{a}}}f^{b},\frac{\mathcal{M}_{b}-\mathcal{M}_{a}}{%
\sqrt{\mathcal{M}_{a}}}\right) +\left[ 2\Gamma \left( f^{a},g\right) +\Gamma
\left( g,g\right) \right] \right \} ds\equiv \chi _{1}+\chi _{2}\text{.}
\label{X1X2}
\end{equation}%
Precisely,
\begin{equation*}
\chi _{1}=2\int_{0}^{t}\mathbb{G}^{t-s}\Gamma \left( \frac{\sqrt{\mathcal{M}%
_{b}}}{\sqrt{\mathcal{M}_{a}}}f^{b}(s),\frac{\mathcal{M}_{b}-\mathcal{M}_{a}%
}{\sqrt{\mathcal{M}_{a}}}\right) ds
\end{equation*}%
and
\begin{equation*}
\chi _{2}=\int_{0}^{t}\mathbb{G}^{t-s}\left[ 2\Gamma \left( f^{a},g\right)
(s)+\Gamma \left( g,g\right) (s)\right] ds\text{.}
\end{equation*}%
Keep in mind that $\mathrm{P%
}_{0}\Gamma \left( h_{1},h_{2}\right) =0$ during the course of the proof,
and it will give extra $\left( 1+t\right) ^{-1/2}$ time decay rate under the
operator $\mathbb{G}^{t}$.

Furthermore, we write $g=g_{1}+g_{2}$, where $g_{1}$ and $g_{2}$ solve the
equations
\begin{equation*}
\left \{
\begin{array}{l}
\displaystyle \partial _{t}g_{1}+\xi \cdot \nabla _{x}g_{1}+\nu (\xi
)g_{1}=2\Gamma \left( \frac{\sqrt{\mathcal{M}_{b}}}{\sqrt{\mathcal{M}_{a}}}%
f^{b},\frac{\mathcal{M}_{b}-\mathcal{M}_{a}}{\sqrt{\mathcal{M}_{a}}}\right) +%
\left[ 2\Gamma \left( f^{a},g\right) +\Gamma \left( g,g\right) \right] \text{%
,} \\[4mm]
\displaystyle g_{1}(0,x,\xi )=0\text{,}%
\end{array}%
\right.
\end{equation*}%
and
\begin{equation*}
\left \{
\begin{array}{l}
\displaystyle \partial _{t}g_{2}+\xi \cdot \nabla _{x}g_{2}+\nu (\xi
)g_{2}=Kg\text{,}\, \\[4mm]
\displaystyle g_{2}(0,x,\xi )=0\text{,}%
\end{array}%
\right.
\end{equation*}%
respectively. By the Duhamel principle, they can be written in terms of the
damped transport operator $\mathbb{S}^{t}$ as
\begin{equation*}
g_{1}(t,x,\xi )=\int_{0}^{t}\mathbb{S}^{t-s}\left[ 2\Gamma \left( \frac{%
\sqrt{\mathcal{M}_{b}}}{\sqrt{\mathcal{M}_{a}}}f^{b},\frac{\mathcal{M}_{b}-%
\mathcal{M}_{a}}{\sqrt{\mathcal{M}_{a}}}\right) +2\Gamma (f^{a},g)+\Gamma
(g,g)\right] ds\text{,}
\end{equation*}%
and
\begin{equation*}
g_{2}(t,x,\xi )=\int_{0}^{t}\mathbb{S}^{t-s}Kg(s)ds\text{.}
\end{equation*}%
In what follows we will estimate $\left \Vert g\right \Vert _{L_{\xi ,\beta
}^{\infty }L_{x}^{2}}$ and $\left \Vert g\right \Vert _{L_{\xi ,\beta
}^{\infty }L_{x}^{\infty }}$. The estimates will be expected to involve the
quantity $ \left( \mathcal{M}_{b}-\mathcal{M}_{a}\right) /\sqrt{%
\mathcal{M}_{a}}$. Before
proceeding, we show that this quantity can be simply dominated by the
parameters $\rho $, $\mu $ and $\lambda $.

\begin{lemma}
\label{Maxwellian-Esti} Let $1<\underline{\la}<\lambda <\overline{\lambda }<2$, $0<\rho <%
\overline{\rho }$ and $\beta \geq 0$. There exits a constant $C>0$ such that
\begin{equation*}
\lvert \frac{\mathcal{M}_{b}-\mathcal{M}_{b}}{\sqrt{\mathcal{M}_{a}}}\langle
\xi \rangle ^{\beta }\rvert \leq Ce^{-\frac{2-\overline{\la}}{16}|\xi|^{2}}\left( \lvert \rho -1\rvert +\lvert
\lambda -1\rvert +\lvert \mu \rvert \right) \text{,}
\end{equation*}%
where $\mathcal{M}_{a}$ and $\mathcal{M}_{b}$ are given as (\ref%
{Maxwellian-assu1}).
\end{lemma}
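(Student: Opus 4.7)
The plan is to write $\mathcal{M}_b - \mathcal{M}_a$ as a path integral through a family of Maxwellians interpolating the parameters, so that each derivative in the parameter supplies precisely one of the factors $|\rho-1|$, $|\mu|$, $|\lambda-1|$. Explicitly, set $\rho(s)=1+s(\rho-1)$, $\mu(s)=s\mu$, $\lambda(s)=1+s(\lambda-1)$ for $s\in[0,1]$ and let
\begin{equation*}
\mathcal{M}_{s}(\xi)=\frac{\rho(s)}{(2\pi\lambda(s))^{3/2}}\exp\!\left(-\frac{|\xi-\mu(s)|^{2}}{2\lambda(s)}\right),
\end{equation*}
so that $\mathcal{M}_{0}=\mathcal{M}_{a}$ and $\mathcal{M}_{1}=\mathcal{M}_{b}$. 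The fundamental theorem of calculus gives
\begin{equation*}
\mathcal{M}_{b}-\mathcal{M}_{a}=\int_{0}^{1}\Bigl[(\rho-1)\,\partial_{\rho}\mathcal{M}_{s}+\mu\cdot\nabla_{\mu}\mathcal{M}_{s}+(\lambda-1)\,\partial_{\lambda}\mathcal{M}_{s}\Bigr]ds,
\end{equation*}
and each parameter derivative is of the form $P(\xi,s)\mathcal{M}_{s}$, where $P$ is a polynomial in $\xi$ with coefficients depending continuously on $s\in[0,1]$ (hence bounded, by compactness and the constraints $\rho(s)\in(0,\overline{\rho}]$, $|\mu(s)|\le|\overline{\mu}|$, $\lambda(s)\in[1,\overline{\lambda}]$).

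The next step is to control $\mathcal{M}_{s}/\sqrt{\mathcal{M}_{a}}$. Writing the exponent of $\mathcal{M}_{s}\cdot e^{+|\xi|^{2}/4}$ explicitly yields
\begin{equation*}
\frac{|\xi|^{2}}{4}-\frac{|\xi-\mu(s)|^{2}}{2\lambda(s)}=|\xi|^{2}\!\left(\frac{1}{4}-\frac{1}{2\lambda(s)}\right)+\frac{\xi\cdot\mu(s)}{\lambda(s)}-\frac{|\mu(s)|^{2}}{2\lambda(s)}.
\end{equation*}
Since $1\le\lambda(s)\le\overline{\lambda}<2$, the leading coefficient is at most $-\tfrac{2-\overline{\lambda}}{4\overline{\lambda}}\le -\tfrac{2-\overline{\lambda}}{8}$. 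An application of Young's inequality, $|\xi\cdot\mu(s)|/\lambda(s)\le \varepsilon|\xi|^{2}+C_\varepsilon|\mu(s)|^{2}$, with $\varepsilon$ chosen so small that the $|\xi|^{2}$ coefficient stays below $-\tfrac{2-\overline{\lambda}}{16}$ after subtracting $\varepsilon$, yields the pointwise bound
\begin{equation*}
\frac{\mathcal{M}_{s}(\xi)}{\sqrt{\mathcal{M}_{a}(\xi)}}\lesssim \exp\!\left(-\tfrac{2-\overline{\lambda}}{16}|\xi|^{2}\right),
\end{equation*}
uniformly in $s\in[0,1]$, with an implicit constant depending on $\overline{\rho},\underline{\lambda},\overline{\lambda},|\overline{\mu}|$.

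Combining these observations, the quantity $\langle\xi\rangle^{\beta}(\mathcal{M}_{b}-\mathcal{M}_{a})/\sqrt{\mathcal{M}_{a}}$ is bounded by the integral over $s$ of $(|\rho-1|+|\mu|+|\lambda-1|)$ times $\langle\xi\rangle^{\beta}|P(\xi,s)|\,\mathcal{M}_{s}/\sqrt{\mathcal{M}_{a}}$. The polynomial factor $\langle\xi\rangle^{\beta}|P(\xi,s)|$ is absorbed against a small fraction of the Gaussian: the margin between the "strong" decay $e^{-(2-\overline{\lambda})|\xi|^{2}/8}$ we extracted and the "target" decay $e^{-(2-\overline{\lambda})|\xi|^{2}/16}$ stated in the lemma provides exactly the room needed to dominate any polynomial. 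This gives the claimed estimate.

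The only real point of care is bookkeeping the constants: one must verify that the $\tfrac{1}{16}$ in the target Gaussian is large enough to accommodate both the Young-inequality loss and the polynomial absorption. The strict inequality $\overline{\lambda}<2$ and $\lambda>\underline{\lambda}>1$ ensure uniform strict negativity of the quadratic coefficient along the entire segment $s\in[0,1]$, which is the structural input that makes the whole argument go through.
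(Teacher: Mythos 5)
Your proposal is correct and follows essentially the same route as the paper: the paper also interpolates linearly in the parameters via $\mathcal{M}(\theta)$ with $\rho(\theta)=1+\theta(\rho-1)$, $\mu(\theta)=\theta\mu$, $\lambda(\theta)=1+\theta(\lambda-1)$ (using the mean value theorem at a single $\theta_{0}$ rather than your integral form of the fundamental theorem of calculus), obtaining a quadratic polynomial in $\xi$ times the small parameters, and then absorbs both the polynomial weight $\langle\xi\rangle^{\beta}$ and the factor $e^{|\xi|^{2}/4}$ into the interpolated Gaussian, using $1<\lambda<\overline{\lambda}<2$ to keep the quadratic exponent strictly negative after a small loss on the cross term $\xi\cdot\mu$. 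The only cosmetic differences are MVT versus the $s$-integral and your explicit Young-inequality bookkeeping of the $\tfrac{2-\overline{\lambda}}{16}$ constant, which matches the paper's choice of a small $\delta$.
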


For the brevity of presentation, we denote the difference of macroscopic
quantities by
\begin{equation}
\mathcal{B}=\left \vert \rho -1\right \vert +\left \vert \lambda -1\right
\vert +\left \vert \mu \right \vert \text{.}  \label{eq:err}
\end{equation}

\begin{proof}
Define
\begin{equation*}
\mathcal{M}(\theta )=\frac{1+(\rho -1)\theta }{\left( 2\pi (1+(\lambda
-1)\theta )\right) ^{3/2}}e^{-\frac{\lvert \xi -\theta \mu \rvert ^{2}}{%
2(1+(\lambda -1)\theta )}}\text{.}
\end{equation*}%
Then by Mean Value Theorem
\begin{align*}
\mathcal{M}_{b}-\mathcal{M}_{a}& =\mathcal{M}(1)-\mathcal{M}(0) \\
& =\left. \mathcal{M}(\theta )\left[ -\frac{3(\lambda -1)}{2(1+\theta
(\lambda -1))}+\frac{\mu \cdot (\xi -\theta \mu )}{1+\theta (\lambda -1)}+%
\frac{(\lambda -1)\left \vert \xi -\theta \mu \right \vert ^{2}}{2(1+\theta
(\lambda -1))^{2}}+\frac{\rho -1}{1+\theta (\rho -1)}\right] \right \vert
_{\theta =\theta _{0}}
\end{align*}%
for some $0<\theta _{0}<1$. Then we have
\begin{align*}
\lvert \frac{\mathcal{M}_{b}-\mathcal{M}_{a}}{\sqrt{\mathcal{M}_{a}}}\rvert
& \leq Ce^{-\frac{\lvert \xi -\theta _{0}\mu \rvert ^{2}}{2(1+(\lambda
-1)\theta )}+\frac{\lvert \xi \rvert ^{2}}{4}}\left[ \lvert \lambda -1\rvert
+\lvert \mu \rvert \lvert \xi -\theta _{0}\mu \rvert +\lvert \lambda
-1\rvert \lvert \xi -\theta _{0}\mu \rvert ^{2}+\lvert \rho -1\rvert \right]
\\
& \leq C_{\delta }e^{-\frac{\lvert \xi -\theta _{0}\mu \rvert ^{2}}{2\lambda
(1+\delta )}+\frac{\lvert \xi \rvert ^{2}}{4}}\left[ \lvert \rho -1\rvert
+\lvert \mu \rvert +\lvert \lambda -1\rvert \right] \text{,}
\end{align*}%
where the polynomial $\lvert \xi -\theta _{0}\mu \rvert $ is absorbed by the
exponential function and $\delta $ can be any positive number.

Note that
\begin{equation*}
e^{-\frac{\lvert \xi -\theta _{0}\mu \rvert ^{2}}{2\lambda (1+\delta )}+%
\frac{\lvert \xi \rvert ^{2}}{4}}=e^{-\frac{2-\lambda (1+\delta )}{%
4(1+\delta )\lambda }\lvert \xi \rvert ^{2}+\frac{\theta _{0}\mu \cdot \xi }{%
(1+\delta )\lambda }-\frac{\theta _{0}^{2}\lvert \mu \rvert ^{2}}{2(1+\delta
)\lambda }}\text{.}
\end{equation*}%
In view of $1<\underline{\la}<\lambda <\overline{\lambda }<2$, one can choose $\delta >0$
sufficiently small to ensure the coefficient in front of $\lvert \xi \rvert
^{2}$ is negative. It then follows that any polynomial $\langle \xi \rangle
^{\beta }$ with $\beta \geq 0$ can be absorbed by the exponential function.
This completes the proof.
\end{proof}

\subsection{$L_{x}^{2}$ Estimate of $g$}

Let $T>0$ be any finite number and $\beta >3/2+2\gamma $. Then for $0\leq
t\leq T$,

\begin{align}
& \left \langle \xi \right \rangle ^{\beta }|g_{1}|_{L_{x}^{2}}  \notag \\
& \lesssim \int_{0}^{t}e^{-\nu (\xi )(t-s)}\nu (\xi )\left \Vert \nu
^{-1}\Gamma \left( \frac{\sqrt{\mathcal{M}_{b}}}{\sqrt{\mathcal{M}_{a}}}%
f^{b},\frac{\mathcal{M}_{b}-\mathcal{M}_{a}}{\sqrt{\mathcal{M}_{a}}}\right)
\right \Vert _{L_{\xi ,\beta }^{\infty }L_{x}^{2}}(s)ds  \notag \\
& \quad +\int_{0}^{t}e^{-\nu (\xi )(t-s)}\nu (\xi )\left[ \Vert \nu
^{-1}\Gamma (f^{a},g)\Vert _{L_{\xi ,\beta }^{\infty }L_{x}^{2}}(s)+\Vert
\nu ^{-1}\Gamma (g,g)\Vert _{L_{\xi ,\beta }^{\infty }L_{x}^{2}}(s)\right] ds
\label{inho-g1-1} \\
& \lesssim \mathcal{B}\int_{0}^{t}e^{-\nu (\xi )(t-s)}\nu (\xi )\left \Vert
\frac{\sqrt{\mathcal{M}_{b}}}{\sqrt{\mathcal{M}_{a}}}f^{b}\right \Vert
_{L_{\xi ,\beta }^{\infty }L_{x}^{2}}(s)ds  \notag \\
& \quad +\int_{0}^{t}e^{-\nu (\xi )(t-s)}\nu (\xi )\left[ \Vert f^{a}\Vert
_{L_{\xi ,\beta }^{\infty }L_{x}^{\infty }}\Vert g\Vert _{L_{\xi ,\beta
}^{\infty }L_{x}^{2}}(s)+\Vert g\Vert _{L_{\xi ,\beta }^{\infty
}L_{x}^{\infty }}\Vert g\Vert _{L_{\xi ,\beta }^{\infty }L_{x}^{2}}(s)\right]
ds  \notag \\
& \lesssim \varepsilon \mathcal{B}\int_{0}^{t}e^{-\nu (\xi )(t-s)}\nu (\xi
)(1+s)^{-\frac{3}{4}}ds\left \Vert \frac{\sqrt{\mathcal{M}_{b}}}{\sqrt{%
\mathcal{M}_{a}}}f_{0}^{b}\right \Vert _{L_{\xi ,\beta }^{\infty
}(L_{x}^{\infty }\cap L_{x}^{1})}  \notag \\
& \quad +\varepsilon \int_{0}^{t}e^{-\nu (\xi )(t-s)}\nu (\xi )(1+s)^{-\frac{%
3}{2}-\frac{1}{4}}ds\sup_{0\leq s\leq T}(1+s)^{1/4}\Vert g\Vert _{L_{\xi
,\beta }^{\infty }L_{x}^{2}}  \notag \\
& \quad +\int_{0}^{t}e^{-\nu (\xi )(t-s)}\nu (\xi )(1+s)^{-\frac{1}{4}%
-1}ds\sup_{0\leq s\leq T}(1+s)^{1/4}\Vert g\Vert _{L_{\xi ,\beta }^{\infty
}L_{x}^{2}}\cdot \sup_{0\leq s\leq T}(1+s)\Vert g\Vert _{L_{\xi ,\beta
}^{\infty }L_{x}^{\infty }}  \notag \\
& \lesssim \varepsilon \mathcal{B}(1+t)^{-3/4}+\varepsilon
(1+t)^{-7/4}\sup_{0\leq s\leq T}(1+s)^{1/4}\Vert g\Vert _{L_{\xi ,\beta
}^{\infty }L_{x}^{2}}  \notag \\
& \quad +(1+t)^{-5/4}\sup_{0\leq s\leq T}(1+s)^{1/4}\Vert g\Vert _{L_{\xi
,\beta }^{\infty }L_{x}^{2}}\cdot \sup_{0\leq s\leq T}(1+s)\Vert g\Vert
_{L_{\xi ,\beta }^{\infty }L_{x}^{\infty }}\, \text{,}  \notag
\end{align}%
by Theorem \ref{prop: nonlinear}, Lemma \ref{Maxwellian-Esti} and the
remarks in Section \ref{Rmk for fb}. Here $\mathcal{B}$ is the difference of
macroscopic quantities defined in \eqref{eq:err}.
\begin{align*}
\left \langle \xi \right \rangle ^{\beta }|g_{2}|_{L_{x}^{2}}& \lesssim
\int_{0}^{t}e^{-\nu (\xi )(t-s)}\nu (\xi )\Vert \nu (\xi )^{-1}Kg\Vert
_{L_{\xi ,\beta }^{\infty }L_{x}^{2}}\left( s\right) ds \\
& \lesssim \int_{0}^{t}e^{-\nu (\xi )(t-s)}\nu (\xi )\Vert g\Vert _{L_{\xi
,\beta -\gamma }^{\infty }L_{x}^{2}}\left( s\right) ds \\
& \lesssim (1+t)^{-1/4}\sup_{0\leq s\leq T}(1+s)^{1/4}\Vert g\Vert _{L_{\xi
,\beta -\gamma }^{\infty }L_{x}^{2}}\text{.}
\end{align*}

To obtain $\sup_{0\leq s\leq T}(1+s)^{1/4}\Vert g\Vert _{L_{\xi ,\beta
-\gamma }^{\infty }L_{x}^{2}}$, we now estimate $\Vert \chi _{1}\Vert
_{L_{\xi ,\beta -\gamma }^{\infty }L_{x}^{2}}$ and $\Vert \chi _{2}\Vert
_{L_{\xi ,\beta -\gamma }^{\infty }L_{x}^{2}}$ instead. In view of Theorems %
\ref{Thm for linear}, \ref{prop: nonlinear} and Lemma \ref{Maxwellian-Esti},
\begin{eqnarray*}
&&\left \Vert \chi _{1}\right \Vert _{L_{\xi ,\beta -\gamma }^{\infty
}L_{x}^{2}} \\
&\lesssim &\int_{0}^{t}\left( 1+t-s\right) ^{-1/2}\left \Vert 2\left \langle
\xi \right \rangle ^{\beta -\gamma }\Gamma \left( \frac{\sqrt{\mathcal{M}_{b}%
}}{\sqrt{\mathcal{M}_{a}}}f^{b},\frac{\mathcal{M}_{b}-\mathcal{M}_{a}}{\sqrt{%
\mathcal{M}_{a}}}\right) \right \Vert _{L_{\xi }^{\infty }L_{x}^{2}}\left(s\right) ds \\
&\lesssim &\mathcal{B}\int_{0}^{t}\left( 1+t-s\right) ^{-1/2}\left \Vert
\left \langle \xi \right \rangle ^{\beta }\frac{\sqrt{\mathcal{M}_{b}}}{%
\sqrt{\mathcal{M}_{a}}}f^{b}\right \Vert _{L_{\xi }^{\infty }L_{x}^{2}}\left(s\right) ds \\
&\lesssim &\varepsilon \mathcal{B}\int_{0}^{t}\left( 1+t-s\right)
^{-1/2}(1+s)^{-3/4}ds \\
&\lesssim &\varepsilon \mathcal{B}(1+t)^{-1/4}\text{,}
\end{eqnarray*}%
and
\begin{align*}
& \quad \Vert \chi _{2}\Vert _{L_{\xi ,\beta -\gamma }^{\infty }L_{x}^{2}} \\
& \lesssim \int_{0}^{t}(1+t-s)^{-5/4}\left( \Vert \Gamma (f^{a},g)\Vert
_{L_{\xi ,\beta -\gamma }^{\infty }L_{x}^{2}}\left( s\right) +\Vert \Gamma
(f^{a},g)\Vert _{L_{\xi ,\beta -\gamma }^{\infty }L_{x}^{1}}\left( s\right)
\right) ds \\
& \quad +\int_{0}^{t}(1+t-s)^{-5/4}\left( \Vert \Gamma (g,g)\Vert _{L_{\xi
,\beta -\gamma }^{\infty }L_{x}^{2}}\left( s\right) +\Vert \Gamma (g,g)\Vert
_{L_{\xi ,\beta -\gamma }^{\infty }L_{x}^{1}}\left( s\right) \right) ds \\
& \lesssim \varepsilon \int_{0}^{t}(1+t-s)^{-5/4}(1+s)^{-1}ds\sup_{0\leq
s\leq T}(1+s)^{1/4}\Vert g\Vert _{L_{\xi ,\beta }^{\infty }L_{x}^{2}} \\
& \quad +\int_{0}^{t}(1+t-s)^{-5/4}(1+s)^{-5/4}ds\left( \sup_{0\leq s\leq
T}(1+s)^{1/4}\Vert g\Vert _{L_{\xi ,\beta }^{\infty }L_{x}^{2}}\sup_{0\leq
s\leq T}(1+s)\Vert g\Vert _{L_{\xi ,\beta }^{\infty }L_{x}^{\infty }}\right)
\\
& \quad +\int_{0}^{t}(1+t-s)^{-5/4}(1+s)^{-1/2}ds\left( \sup_{0\leq s\leq
T}(1+s)^{1/4}\Vert g\Vert _{L_{\xi ,\beta }^{\infty }L_{x}^{2}}\right) ^{2}
\end{align*}%
which implies that
\begin{align*}
& \quad \Vert \chi _{2}\Vert _{L_{\xi ,\beta -\gamma }^{\infty }L_{x}^{2}} \\
& \lesssim \varepsilon (1+t)^{-1}\sup_{0\leq s\leq T}(1+s)^{1/4}\Vert g\Vert
_{L_{\xi ,\beta }^{\infty }L_{x}^{2}} \\
& \quad +(1+t)^{-1/2}\left( \sup_{0\leq s\leq T}(1+s)^{1/4}\Vert g\Vert
_{L_{\xi ,\beta }^{\infty }L_{x}^{2}}\right) ^{2}+(1+t)^{-5/4}\text{%
\thinspace }\left( \sup_{0\leq s\leq T}(1+s)\Vert g\Vert _{L_{\xi ,\beta
}^{\infty }L_{x}^{\infty }}\right) ^{2}\text{.}
\end{align*}%
Therefore, we obtain
\begin{align}
(1+t)^{1/4}\Vert g_{2}\Vert _{L_{\xi ,\beta }^{\infty }L_{x}^{2}}& \lesssim
\varepsilon \mathcal{B}+\varepsilon \sup_{0\leq s\leq T}(1+s)^{1/4}\Vert
g\Vert _{L_{\xi ,\beta }^{\infty }L_{x}^{2}}  \label{inho-g2-1} \\
& \quad +\left( \sup_{0\leq s\leq T}(1+s)^{1/4}\Vert g\Vert _{L_{\xi ,\beta
}^{\infty }L_{x}^{2}}\right) ^{2}+\left( \sup_{0\leq s\leq T}(1+s)\Vert
g\Vert _{L_{\xi ,\beta }^{\infty }L_{x}^{\infty }}\right) ^{2}\text{.}
\notag
\end{align}%
\textbf{Summing up above estimates, we have the $L_{\xi ,\beta }^{\infty
}L_{x}^{2}$ estimate of $g$: }
\begin{align}
(1+t)^{1/4}\Vert g\Vert _{L_{\xi ,\beta }^{\infty }L_{x}^{2}}& \lesssim
\varepsilon \mathcal{B}+\varepsilon \sup_{0\leq s\leq t}(1+s)^{1/4}\Vert
g\Vert _{L_{\xi ,\beta }^{\infty }L_{x}^{2}}  \notag \\
& \quad +\left( \sup_{0\leq s\leq t}(1+s)^{1/4}\Vert g\Vert _{L_{\xi ,\beta
}^{\infty }L_{x}^{2}}\right) ^{2}+\left( \sup_{0\leq s\leq t}(1+s)\Vert
g\Vert _{L_{\xi ,\beta }^{\infty }L_{x}^{\infty }}\right) ^{2}\text{,}
\label{inho-g-1}
\end{align}%
for $0\leq t\leq T$.

\subsection{$L^{\infty}_{x}$ Estimate on $g$}

Applying similar argument as those for the $L_{x}^{2}$ estimate of $g$, we
have for $0\leq t\leq T$ and $\beta >3/2+2\gamma $
\begin{align}
(1+t)\Vert g_{1}\Vert _{L_{\xi ,\beta }^{\infty }L_{x}^{\infty }}& \lesssim
(1+t)^{-1/2}\varepsilon \mathcal{B}+\varepsilon (1+t)^{-3/2}\sup_{0\leq
s\leq T}(1+s)\Vert g\Vert _{L_{\xi ,\beta }^{\infty }L_{x}^{\infty }}  \notag
\\
& \quad +(1+t)^{-1}\left( \sup_{0\leq s\leq T}(1+s)\Vert g\Vert _{L_{\xi
,\beta }^{\infty }L_{x}^{\infty }}\right) ^{2}  \label{inho-g1-2}
\end{align}%
and
\begin{equation*}
\left \langle \xi \right \rangle ^{\beta }|g_{2}|_{L_{x}^{\infty }}\lesssim
(1+t)^{-1}\sup_{0\leq s\leq T}(1+s)\Vert g\Vert _{L_{\xi ,\beta -\gamma
}^{\infty }L_{x}^{\infty }}\text{.}
\end{equation*}%
As same as the $L_{x}^{2}$ case, , we will obtain $\sup_{0\leq s\leq
T}(1+s)\Vert g\Vert _{L_{\xi ,\beta -\gamma }^{\infty }L_{x}^{\infty }}$ by
estimating $\chi _{1}$ and $\chi _{2}$. For $\chi_{1}$, we have
\begin{equation*}
\left \Vert \chi _{1}\right \Vert _{L_{\xi ,\beta -\gamma }^{\infty
}L_{x}^{\infty }}\lesssim \varepsilon \left( 1+t\right) ^{-1}\mathcal{B}%
\text{.}
\end{equation*}
We need some refined estimate for the linearized Boltzmann equation in
Section \ref{refine} to prove this estimate. Since the proof is delicate and
lengthy, we postpone the detail to the next subsection (Proposition \ref%
{Prop-X1}).

For $\chi_{2}$, by Theorems \ref{prop: nonlinear} and \ref{Thm for linear}
we have%
\begin{eqnarray*}
\left \Vert \chi _{2}\right \Vert _{L_{\xi ,\beta -\gamma }^{\infty
}L_{x}^{\infty }} &\lesssim &\int_{0}^{t}\left( 1+t-s\right) ^{-5/4}\left(
\left \Vert \Gamma \left( f^{a},g\right) \right \Vert _{L_{\xi ,\beta
-\gamma }^{\infty }L_{x}^{\infty }}\left( s\right) +\left \Vert \Gamma
\left( f^{a},g\right) \right \Vert _{L_{\xi ,\beta -\gamma }^{\infty
}L_{x}^{2}}\left( s\right) \right) ds \\
&&+\int_{0}^{t}\left( 1+t-s\right) ^{-5/4}\left( \left \Vert \Gamma \left(
g,g\right) \right \Vert _{L_{\xi ,\beta -\gamma }^{\infty }L_{x}^{\infty
}}\left( s\right) +\left \Vert \Gamma \left( g,g\right) \right \Vert
_{L_{\xi ,\beta -\gamma }^{\infty }L_{x}^{2}}\left( s\right) \right) ds \\
&\lesssim &\varepsilon \left( 1+t\right) ^{-5/4}\left( \sup_{0\leq s\leq
T}\left( 1+s\right) \Vert g\Vert _{L_{\xi ,\beta }^{\infty }L_{x}^{\infty
}}+\sup_{0\leq s\leq T}\left( 1+s\right) ^{1/4}\Vert g\Vert _{L_{\xi ,\beta
}^{\infty }L_{x}^{2}}\right) \\
&&+\left( 1+t\right) ^{-5/4}\left[ \left( \sup_{0\leq s\leq T}\left(
1+s\right) ^{1/4}\Vert g\Vert _{L_{\xi ,\beta }^{\infty }L_{x}^{2}}\right)
^{2}+\left( \sup_{0\leq s\leq T}\left( 1+s\right) \Vert g\Vert _{L_{\xi
,\beta }^{\infty }L_{x}^{\infty }}\right) ^{2}\right] \text{.}
\end{eqnarray*}

Therefore,
\begin{eqnarray}
(1+t)\Vert g_{2}\Vert _{L_{\xi ,\beta }^{\infty }L_{x}^{\infty }} &\lesssim
&\varepsilon \mathcal{B}+\varepsilon \left( \sup_{0\leq s\leq T}\left(
1+s\right) \Vert g\Vert _{L_{\xi ,\beta }^{\infty }L_{x}^{\infty
}}+\sup_{0\leq s\leq T}\left( 1+s\right) ^{1/4}\Vert g\Vert _{L_{\xi ,\beta
}^{\infty }L_{x}^{2}}\right)  \label{inho-g2-2} \\
&&+\left( \sup_{0\leq s\leq T}\left( 1+s\right) ^{1/4}\Vert g\Vert _{L_{\xi
,\beta }^{\infty }L_{x}^{2}}\right) ^{2}+\left( \sup_{0\leq s\leq T}\left(
1+s\right) \Vert g\Vert _{L_{\xi ,\beta }^{\infty }L_{x}^{\infty }}\right)
^{2}\text{.}  \notag
\end{eqnarray}%
Combining $\left( \ref{inho-g1-2}\right) $ and $\left( \ref{inho-g2-2}%
\right) $, \textbf{we conclude the $L_{\xi ,\beta }^{\infty }L_{x}^{\infty }$
estimate of $g$:}
\begin{eqnarray}
(1+t)\Vert g\Vert _{L_{\xi ,\beta }^{\infty }L_{x}^{\infty }} &\lesssim
&\varepsilon \mathcal{B}+\varepsilon \left( \sup_{0\leq s\leq T}\left(
1+s\right) \Vert g\Vert _{L_{\xi ,\beta }^{\infty }L_{x}^{\infty
}}+\sup_{0\leq s\leq T}\left( 1+s\right) ^{1/4}\Vert g\Vert _{L_{\xi ,\beta
}^{\infty }L_{x}^{2}}\right)  \label{inho-g-2} \\
&&+\left( \sup_{0\leq s\leq T}\left( 1+s\right) ^{1/4}\Vert g\Vert _{L_{\xi
,\beta }^{\infty }L_{x}^{2}}\right) ^{2}+\left( \sup_{0\leq s\leq T}\left(
1+s\right) \Vert g\Vert _{L_{\xi ,\beta }^{\infty }L_{x}^{\infty }}\right)
^{2}\text{.}  \notag
\end{eqnarray}

Now let
\begin{equation*}
Q\left( T\right) =\underset{0\leq s\leq T}{\sup }\left( (1+s)\left \Vert
\left \langle \xi \right \rangle ^{\beta }g/\mathcal{B}\right \Vert _{L_{\xi
}^{\infty }L_{x}^{\infty }}\text{ }+(1+s)^{1/4}\left \Vert \left \langle \xi
\right \rangle ^{\beta }g/\mathcal{B}\right \Vert _{L_{\xi }^{\infty
}L_{x}^{2}}\right) \text{,}
\end{equation*}%
for any finite $T>0$. From $\left( \ref{inho-g-1}\right) $ and $\left( \ref%
{inho-g-2}\right) $ it gives the inequality
\begin{equation*}
Q\left( T\right) \leq C_{1}\varepsilon +C_{2}\varepsilon Q\left( T\right) +C%
\mathcal{B}Q^{2}\left( T\right)
\end{equation*}%
for any finite $T\geq 0$. Since $g\left( 0,x,\xi \right) =0$, we get $%
Q\left( T\right) \leq \widetilde{C}\varepsilon $ for some constant $%
\widetilde{C}>0$ and for all $T\geq 0$ whenever $\varepsilon >0$ is
sufficiently small. The proof is completed.$%
\hfill%
\square $

\subsection{$L_{\protect \xi,\protect \beta -\protect \gamma}^{\infty
}L_{x}^{\infty}$ estimate of $\protect \chi_{1}$}

In this section we are devoted to the estimate of $\left \Vert \chi
_{1}\right \Vert _{L_{\xi ,\beta -\gamma }^{\infty }L_{x}^{\infty }}$, $%
\beta >3/2+2\gamma $.

\begin{remark}
\label{rmk:slow} At the first sight, it readily follows from Theorems \ref%
{prop: nonlinear} and \ref{Thm for linear} that
\begin{equation*}
\left \Vert \chi _{1}\right \Vert _{L_{\xi ,\beta -\gamma }^{\infty
}L_{x}^{\infty }}\lesssim \varepsilon \mathcal{B}\int_{0}^{t}\left(
1+t-s\right) ^{-5/4}\left( 1+s\right) ^{-3/4}ds\lesssim \varepsilon \mathcal{%
B}\left( 1+t\right) ^{-3/4}\text{.}
\end{equation*}%
However, this estimate for $\chi _{1}$ is not satisfactory and we can
improve it up to the time decay rate $\left( 1+t\right) ^{-1}$ by the
refined estimate for the linearized Boltzmann equation.
\end{remark}

\begin{proposition}
\label{Prop-X1}Let $\beta >3/2+2\gamma $. Then $\chi _{1}$ defined as $%
\left( \ref{X1X2}\right) $ satisfies%
\begin{equation*}
\left \Vert \chi _{1}\right \Vert _{L_{\xi ,\beta -\gamma }^{\infty
}L_{x}^{\infty }}\leq C\varepsilon \left( 1+t\right) ^{-1}\mathcal{B}\left(
\left \Vert \frac{\sqrt{\mathcal{M}_{b}}}{\sqrt{\mathcal{M}_{a}}}%
f_{0}^{b}\right \Vert _{L_{\xi ,\beta }^{\infty }\left( L_{x}^{\infty }\cap
L_{x}^{1}\right) }+\left \Vert f_{0}^{b}\right \Vert _{L_{x}^{1}L_{\xi
}^{2}}\right)
\end{equation*}%
for some constant $C>0$ independent of $t$.
\end{proposition}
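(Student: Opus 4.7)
\medskip
\noindent\textbf{Proof proposal.} The strategy is to overcome the mismatch pointed out in Remark \ref{rmk:slow} by decomposing both the inner profile $f^{b}$ and the outer solution operator $\mathbb{G}^{t-s}$, isolating a single ``fluid $\times$ fluid'' interaction that mirrors the toy heat model \eqref{eq:est-toy}, and then splitting the Duhamel integral in time. Concretely, write the Duhamel formula for $f^b$ (viewed around $\mathcal{M}_a$ as in Section \ref{Rmk for fb}),
\[
f^{b}(s) \;=\; \varepsilon\,\mathbb{G}_{b;L}^{s}f_{0}^{b} \;+\; \varepsilon\,\mathbb{G}_{b;S}^{s}f_{0}^{b} \;+\; \int_{0}^{s}\mathbb{G}_{b}^{s-\tau}\Gamma_{b}(f^{b},f^{b})(\tau)\,d\tau
\;=:\; f^{b}_{L}(s)+f^{b}_{S}(s)+f^{b}_{N}(s),
\]
and decompose the outer operator $\mathbb{G}^{t-s}=\mathbb{G}^{t-s}_{L;0}+\mathbb{G}^{t-s}_{L;\perp}+\mathbb{G}^{t-s}_{S}$. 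This gives a $3\times3$ table of contributions to $\chi_{1}$. Throughout I shall use Lemma \ref{Maxwellian-Esti} to bound the second argument of $\Gamma$ in a Gaussian-weighted $L^{\infty}_{\xi}$ sense by $\mathcal{B}$, and Lemma \ref{basic-Gamma} so that the source satisfies
\[
\bigl\lVert\nu^{-1}\Gamma\bigl(\tfrac{\sqrt{\mathcal{M}_b}}{\sqrt{\mathcal{M}_a}}h,\tfrac{\mathcal{M}_b-\mathcal{M}_a}{\sqrt{\mathcal{M}_a}}\bigr)\bigr\rVert_{L^{\infty}_{\xi,\beta-\gamma}L^{p}_{x}} \;\lesssim\; \mathcal{B}\,\lVert h\rVert_{L^{\infty}_{\xi,\beta}L^{p}_{x}},\qquad p\in[1,\infty].
\]
The key algebraic fact is that $\mathrm{P}_{0}\Gamma(\cdot,\cdot)=0$, so every application of $\mathbb{G}^{t-s}_{L;0}$ to our source term picks up the extra $(1+t-s)^{-1/2}$ factor in Proposition \ref{Prop-LS}.

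The dominant contribution is $I_{L;0}^{L}:=\int_{0}^{t}\mathbb{G}^{t-s}_{L;0}\Gamma(\tfrac{\sqrt{\mathcal{M}_b}}{\sqrt{\mathcal{M}_a}}f^{b}_{L}(s),\tfrac{\mathcal{M}_b-\mathcal{M}_a}{\sqrt{\mathcal{M}_a}})ds$. Split the integral at $s=t/2$. On $[0,t/2]$, use the $L^{1}_{x}\to L^{\infty}_{x}$ bound from Proposition \ref{Prop-LS}, which with the $\mathrm{P}_{0}$-gain reads $(1+t-s)^{-3/2-1/2}\lesssim(1+t)^{-2}$, and the fact that $\lVert f^{b}_{L}(s)\rVert_{L^{\infty}_{\xi,\beta}L^{1}_{x}}\lesssim\varepsilon\lVert f_{0}^{b}\rVert_{L^{1}_{x}L^{2}_{\xi}}$ is \emph{uniformly} bounded in $s$ (Corollary \ref{lemm-long} with $p=r=1$ via the $L^{2}_{\xi}$ bound on the projected data and the remark following its proof). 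This produces $\mathcal{B}\varepsilon(1+t)^{-2}\cdot t\lesssim \mathcal{B}\varepsilon(1+t)^{-1}$. On $[t/2,t]$, use the $L^{\infty}_{x}\to L^{\infty}_{x}$ bound with $\mathrm{P}_{0}$-gain, giving $(1+t-s)^{-1/2}$, combined with the $L^{1}_{x}\to L^{\infty}_{x}$ decay of $f^{b}_{L}(s)$, namely $\lVert f^{b}_{L}(s)\rVert_{L^{\infty}_{\xi,\beta}L^{\infty}_{x}}\lesssim\varepsilon(1+s)^{-3/2}\lesssim\varepsilon(1+t)^{-3/2}$; integration yields $\mathcal{B}\varepsilon(1+t)^{-3/2}(1+t)^{1/2}=\mathcal{B}\varepsilon(1+t)^{-1}$. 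Exactly as in the toy model, only this single block carries the sharp balance.

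All remaining blocks should be strictly faster. For $f^{b}_{S}$ and $f^{b}_{N}$, Corollary \ref{lemm-short} and Theorem \ref{prop: nonlinear} furnish exponential or $(1+s)^{-3/2}$ decay (with an extra $\varepsilon$ in the nonlinear case), so Theorem \ref{Thm for linear} applied to the outer $\mathbb{G}^{t-s}$ already gives $O(\varepsilon\mathcal{B}(1+t)^{-1})$ after simple time integration. For the non-fluid/short outer pieces $\mathbb{G}^{t-s}_{L;\perp}$ and $\mathbb{G}^{t-s}_{S}$ paired with $f^{b}_{L}$, Proposition \ref{Prop-LS} gives exponential decay in $L^{2}_{x}$; to upgrade to $L^{\infty}_{\xi,\beta-\gamma}L^{\infty}_{x}$ I invoke the refined semigroup bound of Proposition \ref{Prop-linear} (obtained by the singular–regular iteration $W^{(m)}+R^{(m)}$), which converts $L^{2}_{x}$ exponential decay plus $L^{\infty}_{\xi}$ control of the source into $L^{\infty}_{x}$ exponential decay, at the cost of harmless weighted norms of the data.

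The principal obstacle I expect is the bookkeeping on $[0,t/2]$ for the dominant block, where the time-integrated fluid kernel is merely bounded rather than decaying: the $(1+t)^{-1}$ rate is recovered \emph{only} because the $\mathrm{P}_{0}$-gain upgrades the $L^{1}\to L^{\infty}$ decay to $(1+t-s)^{-2}$, so that multiplication by the measure $|[0,t/2]|\sim t$ still leaves one power of $(1+t)^{-1}$. A subsidiary technical point is that the source involves $\sqrt{\mathcal{M}_b/\mathcal{M}_a}\,f^{b}$, whose Gaussian prefactor must be absorbed into a weight $\varpi$ of the form \eqref{weight}; this is where the assumption $1<\underline{\lambda}<\lambda<\overline{\lambda}<2$ and the weighted bound in Lemma \ref{Maxwellian-Esti} are essential, and why the weighted variants in Proposition \ref{Prop-linear}, Corollary \ref{lemm-long} and Corollary \ref{lemm-short} (all with weight $\varpi$) are tailor-made for the present argument. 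Once all nine blocks are summed, the desired bound $\lVert\chi_{1}\rVert_{L^{\infty}_{\xi,\beta-\gamma}L^{\infty}_{x}}\lesssim \varepsilon\mathcal{B}(1+t)^{-1}(\lVert\sqrt{\mathcal{M}_b/\mathcal{M}_a}f_{0}^{b}\rVert_{L^{\infty}_{\xi,\beta}(L^{1}_{x}\cap L^{\infty}_{x})}+\lVert f_{0}^{b}\rVert_{L^{1}_{x}L^{2}_{\xi}})$ follows.
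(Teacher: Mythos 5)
Your overall strategy coincides with the paper's (Duhamel for $f^{b}$, long/short decomposition of both the inner and outer semigroups, isolating the fluid$\times$fluid block, splitting the time integral at $t/2$), but two of your steps do not go through as written. First, on $[0,t/2]$ for the dominant block you rely on a uniform-in-time bound $\lVert f^{b}_{L}(s)\rVert_{L^{\infty}_{\xi,\beta}L^{1}_{x}}\lesssim\varepsilon\lVert f_{0}^{b}\rVert_{L^{1}_{x}L^{2}_{\xi}}$, citing Corollary \ref{lemm-long} with $p=r=1$. That corollary (like Proposition \ref{Prop-linear} and the whole $W^{(m)}+R^{(m)}$ machinery behind it) is only established for $2\le r\le\infty$; the paper has no $L^{1}_{x}$ theory for the long-wave evolution at all --- for $\mathbb{G}_{L;\perp}$ only $L^{2}_{x}L^{2}_{\xi}$ information is available --- and bounding a weighted $L^{\infty}_{\xi}L^{1}_{x}$ quantity by $\lVert f_{0}^{b}\rVert_{L^{1}_{x}L^{2}_{\xi}}$ is not provided by any result in the paper. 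This piece is repairable: the paper's $\chi_{111}$ argument avoids any $L^{1}_{x}$ semigroup bound by pairing the $\mathrm{P}_{0}$-gained $(1+t-s)^{-5/4}$ decay of the full $\mathbb{G}^{t-s}$ (Theorem \ref{Thm for linear}, Proposition \ref{Prop-linear}) with the $(1+s)^{-3/4}$ decay of $\lVert\tfrac{\sqrt{\mathcal{M}_{b}}}{\sqrt{\mathcal{M}_{a}}}\mathbb{G}_{b}^{s}f_{0}^{b}\rVert_{L^{\infty}_{\xi,\beta}(L^{2}_{x}\cap L^{\infty}_{x})}$, which already integrates to $(1+t)^{-1}$ on $[0,t/2]$.

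The more serious gap is your claim that all remaining blocks are \emph{strictly faster}. For the cross blocks $\mathbb{G}^{t-s}_{L;\perp}\,\Gamma(\tfrac{\sqrt{\mathcal{M}_{b}}}{\sqrt{\mathcal{M}_{a}}}f^{b}_{L}(s),\cdot)$ and $\mathbb{G}^{t-s}_{S}\,\Gamma(\tfrac{\sqrt{\mathcal{M}_{b}}}{\sqrt{\mathcal{M}_{a}}}f^{b}_{L}(s),\cdot)$ on $s\in[t/2,t]$, every estimate you invoke (Proposition \ref{Prop-LS}, Corollary \ref{lemm-short}, Proposition \ref{Prop-linear}) funnels through the $L^{2}_{x}L^{2}_{\xi}$ norm of the source, and that norm decays only like $\mathcal{B}(1+s)^{-3/4}$ when the data are merely in $L^{1}_{x}$; hence $\int_{t/2}^{t}e^{-(t-s)/c}(1+s)^{-3/4}ds\sim(1+t)^{-3/4}$, which is exactly the unsatisfactory rate of Remark \ref{rmk:slow}. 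The $L^{2}_{x}\to L^{\infty}_{x}$ ``upgrade'' does not create the missing half power. The paper's proof contains the idea you are missing: estimate the $L^{\infty}_{x}$ norm by Sobolev interpolation, $\lVert w\rVert_{L^{\infty}_{x}}\lesssim\lVert\nabla_{x}^{2}w\rVert_{L^{2}}^{1/2}\lVert\nabla_{x}w\rVert_{L^{2}}^{1/2}$, and commute the spatial derivatives through the outer operator and through $\Gamma$ onto the inner long-wave semigroup $\mathbb{G}^{s}_{b,L}$, where each derivative buys an extra $(1+s)^{-1/2}$; the inner norms then decay like $(1+s)^{-3/2}$ and these blocks close at $(1+t)^{-3/2}$. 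Without this derivative-transfer device (or an equivalent replacement) your scheme stalls at $(1+t)^{-3/4}$ and does not reach the claimed $(1+t)^{-1}$. A smaller final point: your block estimates naturally live in $L^{\infty}_{x}L^{2}_{\xi}$, so to reach $L^{\infty}_{\xi,\beta-\gamma}L^{\infty}_{x}$ you still need the damped-transport bootstrap $\chi_{11}=\int_{0}^{t}\mathbb{S}^{t-\tau}\bigl[K\chi_{11}+\varepsilon\Gamma(\cdots)\bigr]d\tau$ that the paper carries out at the end.
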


To prove this, we represent $f^{b}$ by
\begin{equation*}
f^{b}=\varepsilon \mathbb{G}_{b}^{t}f_{0}^{b}+\int_{0}^{t}\mathbb{G}%
_{b}^{t-\tau }\Gamma _{b}\left( f^{b},f^{b}\right)\left(\tau\right) d\tau
\end{equation*}
and then write $\chi _{1}$ as
\begin{eqnarray*}
\chi _{1} &=&\int_{0}^{t}\mathbb{G}^{t-\tau }\Gamma \left( \frac{\sqrt{%
\mathcal{M}_{b}}}{\sqrt{\mathcal{M}_{a}}}f^{b}\,,\frac{\mathcal{M}_{b}-%
\mathcal{M}_{a}}{\sqrt{\mathcal{M}_{a}}}\right)\left(\tau\right) d\tau \\
&=&\varepsilon \int_{0}^{t}\mathbb{G}^{t-\tau }\Gamma \left( \frac{\sqrt{%
\mathcal{M}_{b}}}{\sqrt{\mathcal{M}_{a}}}\mathbb{G}_{b}^{\tau }f_{0}^{b}\,,%
\frac{\mathcal{M}_{b}-\mathcal{M}_{a}}{\sqrt{\mathcal{M}_{a}}}\right)\left(\tau\right) d\tau
\\
&&+\int_{0}^{t}\mathbb{G}^{t-\tau }\Gamma \left( \frac{\sqrt{\mathcal{M}_{b}}%
}{\sqrt{\mathcal{M}_{a}}}\int_{0}^{\tau }\mathbb{G}_{b}^{\tau -s}\Gamma
_{b}\left( f^{b},f^{b}\right) ds\,,\frac{\mathcal{M}_{b}-\mathcal{M}_{a}}{%
\sqrt{\mathcal{M}_{a}}}\right)\left(\tau\right) d\tau \\
&\equiv &\chi _{11}+\chi _{12}\text{.}
\end{eqnarray*}%
We further split $\chi _{11}$ into two parts
\begin{equation*}
\chi _{111}=\varepsilon \int_{0}^{\frac{t}{2}}\mathbb{G}^{t-\tau }\Gamma
\left( \frac{\sqrt{\mathcal{M}_{b}}}{\sqrt{\mathcal{M}_{a}}}\mathbb{G}%
_{b}^{\tau }f_{0}^{b}\,,\frac{\mathcal{M}_{b}-\mathcal{M}_{a}}{\sqrt{%
\mathcal{M}_{a}}}\right) \left(\tau\right)d\tau \text{,}\,
\end{equation*}%
and%
\begin{equation*}
\chi _{112}=\varepsilon \int_{\frac{t}{2}}^{t}\mathbb{G}^{t-\tau }\Gamma
\left( \frac{\sqrt{\mathcal{M}_{b}}}{\sqrt{\mathcal{M}_{a}}}\mathbb{G}%
_{b}^{\tau }f_{0}^{b}\,,\frac{\mathcal{M}_{b}-\mathcal{M}_{a}}{\sqrt{%
\mathcal{M}_{a}}}\right) \left(\tau\right)d\tau \text{.}
\end{equation*}%
Among them, $\chi _{112}$ is delicate, so we first deal with this term and
use the long-short wave decomposition to split the integrand into six parts
\begin{equation*}
\chi _{112}=\varepsilon \int_{\frac{t}{2}}^{t}\left( \mathbb{G}%
_{L,0}^{t-\tau }+\mathbb{G}_{L,\bot }^{t-\tau }+\mathbb{G}_{S}^{t-\tau
}\right) \Gamma \left( \frac{\sqrt{\mathcal{M}_{b}}}{\sqrt{\mathcal{M}_{a}}}%
\left( \mathbb{G}_{b,L}^{\tau }+\mathbb{G}_{b,S}^{\tau }\right) f_{0}^{b}\,,%
\frac{\mathcal{M}_{b}-\mathcal{M}_{a}}{\sqrt{\mathcal{M}_{a}}}\right) \left(\tau\right)d\tau
\text{.}
\end{equation*}%
For the purpose of simplification, we denote
\begin{equation*}
\mathcal{T}h:=\Gamma \left( \frac{\sqrt{\mathcal{M}_{b}}}{\sqrt{\mathcal{M}%
_{a}}}h\,,\frac{\mathcal{M}_{b}-\mathcal{M}_{a}}{\sqrt{\mathcal{M}_{a}}}%
\right) \text{.}
\end{equation*}%
We will estimate them term by term.

In view of Proposition \ref{Prop-LS}, Corollary \ref{lemm-long}, and the
fact that $\mathrm{P}_{0}\mathcal{T}\mathbb{G}_{b,L}^{\tau }f_{0}^{b}=0$,
\begin{eqnarray*}
&&\left \Vert \mathbb{G}_{L,0}^{t-\tau }\mathcal{T}\mathbb{G}_{b,L}^{\tau
}f_{0}^{b}\right \Vert _{L_{x}^{\infty }L_{\xi }^{2}} \\
&\lesssim &\left( 1+t-\tau \right) ^{-\frac{3}{2p}-\frac{1}{2}}\left \Vert
\mathcal{T}\mathbb{G}_{b,L}^{\tau }f_{0}^{b}\right \Vert _{L_{x}^{p}L_{\xi
}^{2}} \\
&\lesssim &\left( 1+t-\tau \right) ^{-\frac{3}{2p}-\frac{1}{2}}\left \Vert
\Gamma \left( \frac{\sqrt{\mathcal{M}_{b}}}{\sqrt{\mathcal{M}_{a}}}\mathbb{G}%
_{b,L}^{\tau }f_{0}^{b}\,,\frac{\mathcal{M}_{b}-\mathcal{M}_{a}}{\sqrt{%
\mathcal{M}_{a}}}\right) \right \Vert _{L_{x}^{p}L_{\xi }^{2}} \\
&\lesssim &\left( 1+t-\tau \right) ^{-\frac{3}{2p}-\frac{1}{2}}\mathcal{B}%
\left \Vert \nu \left( \xi \right) \frac{\sqrt{\mathcal{M}_{b}}}{\sqrt{%
\mathcal{M}_{a}}}\mathbb{G}_{b,L}^{\tau }f_{0}^{b}\right \Vert
_{L_{x}^{p}L_{\xi }^{2}} \\
&\lesssim &\left( 1+t-\tau \right) ^{-\frac{3}{2p}-\frac{1}{2}}\mathcal{B}%
\left \Vert \frac{\sqrt{\mathcal{M}_{b}}}{\sqrt{\mathcal{M}_{a}}}\mathbb{G}%
_{b,L}^{\tau }f_{0}^{b}\right \Vert _{L_{\xi ,\beta }^{\infty }L_{x}^{p}} \\
&\lesssim &\left( 1+t-\tau \right) ^{-\frac{3}{2p}-\frac{1}{2}}\left( 1+\tau
\right) ^{-\frac{3}{2}\left( \frac{1}{r}-\frac{1}{p}\right) }\mathcal{B}%
\left( \left \Vert \frac{\sqrt{\mathcal{M}_{b}}}{\sqrt{\mathcal{M}_{a}}}%
f_{0L}^{b}\right \Vert _{L_{\xi ,\beta }^{\infty }L_{x}^{2}}+\left \Vert
f_{0L}^{b}\right \Vert _{L_{x}^{r}L_{\xi }^{2}}\right) \text{,}
\end{eqnarray*}%
for $2\leq p\leq \infty $ and $1\leq r\leq p$. Taking $r=1$ and $p>3$ gives%
\begin{equation}
\int_{\frac{t}{2}}^{t}\left \Vert \mathbb{G}_{L,0}^{t-\tau }\mathcal{T}%
\mathbb{G}_{b,L}^{\tau }f_{0}^{b}\right \Vert _{L_{x}^{\infty }L_{\xi
}^{2}}d\tau \lesssim \left( 1+t\right) ^{-1}\mathcal{B}\left( \left \Vert
\frac{\sqrt{\mathcal{M}_{b}}}{\sqrt{\mathcal{M}_{a}}}f_{0L}^{b}\right \Vert
_{L_{\xi ,\beta }^{\infty }L_{x}^{2}}+\left \Vert f_{0L}^{b}\right \Vert
_{L_{x}^{1}L_{\xi }^{2}}\right) \text{.}  \label{GL-0-L}
\end{equation}

By the Sobolev inequality,
\begin{eqnarray*}
\left \Vert \mathbb{G}_{L,\bot }^{t-\tau }\mathcal{T}\mathbb{G}_{b,L}^{\tau
}f_{0}^{b}\right \Vert _{L_{x}^{\infty }L_{\xi }^{2}} &\lesssim &\left \Vert
\nabla _{x}^{2}\mathbb{G}_{L,\bot }^{t-\tau }\mathcal{T}\mathbb{G}%
_{b,L}^{\tau }f_{0}^{b}\right \Vert _{L_{x}^{2}L_{\xi }^{2}}^{1/2}\cdot
\left \Vert \nabla _{x}\mathbb{G}_{L,\bot }^{t-\tau }\mathcal{T}\mathbb{G}%
_{b,L}^{\tau }f_{0}^{b}\right \Vert _{L_{x}^{2}L_{\xi }^{2}}^{1/2} \\
&\lesssim &\left \Vert \mathbb{G}_{L,\bot }^{t-\tau }\mathcal{T}\nabla
_{x}^{2}\mathbb{G}_{b,L}^{\tau }f_{0}^{b}\right \Vert _{L_{x}^{2}L_{\xi
}^{2}}^{1/2}\cdot \left \Vert \mathbb{G}_{L,\bot }^{t-\tau }\mathcal{T}%
\nabla _{x}\mathbb{G}_{b,L}^{\tau }f_{0}^{b}\right \Vert _{L_{x}^{2}L_{\xi
}^{2}}^{1/2}\text{.}
\end{eqnarray*}%
In view of Proposition \ref{Prop-LS} and Corollary \ref{lemm-long},%
\begin{eqnarray*}
\left \Vert \mathbb{G}_{L,\bot }^{t-\tau }\mathcal{T}\nabla _{x}^{2}\mathbb{G%
}_{b,L}^{\tau }f_{0}^{b}\right \Vert _{L_{x}^{2}L_{\xi }^{2}} &\lesssim &e^{-%
\frac{t-\tau }{c}}\left \Vert \mathcal{T}\nabla _{x}^{2}\mathbb{G}%
_{b,L}^{\tau }f_{0}^{b}\right \Vert _{L_{x}^{2}L_{\xi }^{2}} \\
&\lesssim &e^{-\frac{t-\tau }{c}}\left \Vert \Gamma \left( \frac{\sqrt{%
\mathcal{M}_{b}}}{\sqrt{\mathcal{M}_{a}}}\nabla _{x}^{2}\mathbb{G}%
_{b,L}^{\tau }f_{0}^{b}\,,\frac{\mathcal{M}_{b}-\mathcal{M}_{a}}{\sqrt{%
\mathcal{M}_{a}}}\right) \right \Vert _{L_{x}^{2}L_{\xi }^{2}} \\
&\lesssim &e^{-\frac{t-\tau }{c}}\mathcal{B}\left \Vert \nu \left( \xi
\right) \frac{\sqrt{\mathcal{M}_{b}}}{\sqrt{\mathcal{M}_{a}}}\nabla _{x}^{2}%
\mathbb{G}_{b,L}^{\tau }f_{0}^{b}\right \Vert _{L_{x}^{2}L_{\xi }^{2}} \\
&\lesssim &e^{-\frac{t-\tau }{c}}\mathcal{B}\left \Vert \frac{\sqrt{\mathcal{%
M}_{b}}}{\sqrt{\mathcal{M}_{a}}}\nabla _{x}^{2}\mathbb{G}_{b,L}^{\tau
}f_{0}^{b}\right \Vert _{L_{\xi ,\beta }^{\infty }L_{x}^{2}} \\
&\lesssim &e^{-\frac{t-\tau }{c}}\left( 1+\tau \right) ^{-\frac{3}{4}-\frac{2%
}{2}}\mathcal{B}\left( \left \Vert \frac{\sqrt{\mathcal{M}_{b}}}{\sqrt{%
\mathcal{M}_{a}}}f_{0L}^{b}\right \Vert _{L_{\xi ,\beta }^{\infty
}L_{x}^{2}}+\left \Vert f_{0L}^{b}\right \Vert _{L_{x}^{1}L_{\xi
}^{2}}\right) \text{.}
\end{eqnarray*}%
Similarly,
\begin{equation*}
\left \Vert \mathbb{G}_{L,\bot }^{t-\tau }\mathcal{T}\nabla _{x}\mathbb{G}%
_{b,L}^{\tau }f_{0}^{b}\right \Vert _{L_{x}^{2}L_{\xi }^{2}}\lesssim e^{-%
\frac{t-\tau }{c}}\left( 1+\tau \right) ^{-\frac{3}{4}-\frac{1}{2}}\mathcal{B%
}\left( \left \Vert \frac{\sqrt{\mathcal{M}_{b}}}{\sqrt{\mathcal{M}_{a}}}%
f_{0L}^{b}\right \Vert _{L_{\xi ,\beta }^{\infty }L_{x}^{2}}+\left \Vert
f_{0L}^{b}\right \Vert _{L_{x}^{1}L_{\xi }^{2}}\right) \text{.}
\end{equation*}%
Therefore,%
\begin{equation*}
\left \Vert \mathbb{G}_{L,\bot }^{t-\tau }\mathcal{T}\mathbb{G}_{b,L}^{\tau
}f_{0}^{b}\right \Vert _{L_{x}^{\infty }L_{\xi }^{2}}\lesssim e^{-\frac{%
t-\tau }{c}}\left( 1+\tau \right) ^{-\frac{3}{2}}\mathcal{B}\left( \left
\Vert \frac{\sqrt{\mathcal{M}_{b}}}{\sqrt{\mathcal{M}_{a}}}f_{0L}^{b}\right
\Vert _{L_{\xi ,\beta }^{\infty }L_{x}^{2}}+\left \Vert f_{0L}^{b}\right
\Vert _{L_{x}^{1}L_{\xi }^{2}}\right)
\end{equation*}%
and
\begin{equation}
\quad \left \Vert \int_{\frac{t}{2}}^{t}\mathbb{G}_{L,\bot }^{t-\tau }%
\mathcal{T}\mathbb{G}_{b,L}^{\tau }f_{0}^{b}d\tau \right \Vert
_{L_{x}^{\infty }L_{\xi }^{2}}\lesssim \left( 1+t\right) ^{-\frac{3}{2}}%
\mathcal{B}\left( \left \Vert \frac{\sqrt{\mathcal{M}_{b}}}{\sqrt{\mathcal{M}%
_{a}}}f_{0L}^{b}\right \Vert _{L_{\xi ,\beta }^{\infty }L_{x}^{2}}+\left
\Vert f_{0L}^{b}\right \Vert _{L_{x}^{1}L_{\xi }^{2}}\right) \text{.}
\label{GL-per-L}
\end{equation}

By Proposition \ref{Prop-LS}, Corollary \ref{lemm-short} and the fact that $%
\mathrm{P}_{0}\mathcal{T}\mathbb{G}_{b,S}^{\tau }f_{0}^{b}=0$,%
\begin{eqnarray}
\int_{\frac{t}{2}}^{t}\left \Vert \left( \mathbb{G}_{L,0}^{t-\tau }+\mathbb{G%
}_{L,\bot }^{t-\tau }\right) \mathcal{T}\mathbb{G}_{b,S}^{\tau
}f_{0}^{b}\right \Vert _{L_{x}^{\infty }L_{\xi }^{2}}d\tau &\lesssim &\int_{%
\frac{t}{2}}^{t}\left( 1+t-\tau \right) ^{-\frac{5}{4}}\left \Vert \mathcal{T%
}\mathbb{G}_{b,S}^{\tau }f_{0}^{b}\right \Vert _{L_{x}^{2}L_{\xi }^{2}}d\tau
\label{GLS} \\
&\lesssim &\mathcal{B}\int_{\frac{t}{2}}^{t}\left( 1+t-\tau \right) ^{-\frac{%
5}{4}}\left \Vert \nu \left( \xi \right) \frac{\sqrt{\mathcal{M}_{b}}}{\sqrt{%
\mathcal{M}_{a}}}\mathbb{G}_{S}^{\tau }f_{0}^{b}\right \Vert _{L_{\xi
}^{2}L_{x}^{2}}d\tau  \notag \\
&\lesssim &\int_{\frac{t}{2}}^{t}\mathcal{B}\left( 1+t-\tau \right) ^{-\frac{%
5}{4}}\left \Vert \frac{\sqrt{\mathcal{M}_{b}}}{\sqrt{\mathcal{M}_{a}}}%
\mathbb{G}_{S}^{\tau }f_{0}^{b}\right \Vert _{L_{\xi ,\beta }^{\infty
}L_{x}^{2}}d\tau  \notag \\
&\lesssim &e^{-\frac{t}{2c}}\mathcal{B}\left( \left \Vert \frac{\sqrt{%
\mathcal{M}_{b}}}{\sqrt{\mathcal{M}_{a}}}f_{0S}^{b}\right \Vert _{L_{\xi
,\beta }^{\infty }L_{x}^{2}}+\left \Vert f_{0}^{b}\right \Vert _{L_{\xi
}^{2}L_{x}^{2}}\right) \text{.}  \notag
\end{eqnarray}

In accordance with $\left( \ref{short-r-out}\right) $ and Corollary \ref%
{lemm-short},
\begin{eqnarray}
&&\int_{\frac{t}{2}}^{t}\left \Vert \mathbb{G}_{S}^{t-\tau }\mathcal{T}%
\mathbb{G}_{S}^{\tau }f_{0}^{b}\right \Vert _{L_{x}^{\infty }L_{\xi
}^{2}}d\tau  \label{GSS} \\
&\lesssim &\int_{\frac{t}{2}}^{t}e^{-\frac{t-\tau }{c}}\left[ \left \Vert
\mathcal{T}\mathbb{G}_{S}^{\tau }f_{0}^{b}\right \Vert _{L_{\xi
}^{2}L_{x}^{2}}+\left \Vert \mathcal{T}\mathbb{G}_{S}^{\tau }f_{0}^{b}\right
\Vert _{L_{\xi }^{2}L_{x}^{\infty }}\right] d\tau  \notag \\
&\lesssim &\int_{\frac{t}{2}}^{t}e^{-\frac{t-\tau }{c}}\mathcal{B}\left(
\left \Vert \nu \left( \xi \right) \frac{\sqrt{\mathcal{M}_{b}}}{\sqrt{%
\mathcal{M}_{a}}}\mathbb{G}_{S}^{\tau }f_{0}^{b}\right \Vert _{L_{\xi
}^{2}L_{x}^{2}}+\left \Vert \nu \left( \xi \right) \frac{\sqrt{\mathcal{M}%
_{b}}}{\sqrt{\mathcal{M}_{a}}}\mathbb{G}_{S}^{\tau }f_{0}^{b}\right \Vert
_{L_{\xi }^{2}L_{x}^{\infty }}\right) d\tau  \notag \\
&\lesssim &\mathcal{B}\int_{\frac{t}{2}}^{t}e^{-\frac{t-\tau }{c}}\left(
\left \Vert \frac{\sqrt{\mathcal{M}_{b}}}{\sqrt{\mathcal{M}_{a}}}\mathbb{G}%
_{S}^{\tau }f_{0}^{b}\right \Vert _{L_{\xi ,\beta }^{\infty
}L_{x}^{2}}+\left \Vert \frac{\sqrt{\mathcal{M}_{b}}}{\sqrt{\mathcal{M}_{a}}}%
\mathbb{G}_{S}^{\tau }f_{0}^{b}\right \Vert _{L_{\xi ,\beta }^{\infty
}L_{x}^{\infty }}\right) d\tau  \notag \\
&\lesssim &\mathcal{B}\int_{\frac{t}{2}}^{t}e^{-\frac{t-\tau }{c}}e^{-\frac{%
\tau }{c}}d\tau \left( \left \Vert \frac{\sqrt{\mathcal{M}_{b}}}{\sqrt{%
\mathcal{M}_{a}}}f_{0S}^{b}\right \Vert _{L_{\xi ,\beta }^{\infty }\left(
L_{x}^{2}\cap L_{x}^{\infty }\right) }+\left \Vert f_{0}^{b}\right \Vert
_{L_{\xi }^{2}\left( L_{x}^{2}\cap L_{x}^{\infty }\right) }\right)  \notag \\
&\lesssim &e^{-\frac{t}{2c}}\mathcal{B}\left( \left \Vert \frac{\sqrt{%
\mathcal{M}_{b}}}{\sqrt{\mathcal{M}_{a}}}f_{0S}^{b}\right \Vert _{L_{\xi
,\beta }^{\infty }\left( L_{x}^{2}\cap L_{x}^{\infty }\right) }+\left \Vert
f_{0}^{b}\right \Vert _{L_{\xi }^{2}\left( L_{x}^{2}\cap L_{x}^{\infty
}\right) }\right) \text{.}  \notag
\end{eqnarray}

By the Sobolev inequality, Proposition \ref{Prop-LS} and Corollary \ref%
{lemm-long},
\begin{align*}
& \quad \left \Vert \mathbb{G}_{S}^{t-\tau }\mathcal{T}\mathbb{G}_{L}^{\tau
}f_{0}^{b}\right \Vert _{L_{x}^{\infty }L_{\xi }^{2}} \\
& \lesssim \left \Vert \nabla _{x}^{2}\mathbb{G}_{S}^{t-\tau }\mathcal{T}%
\mathbb{G}_{L}^{\tau }f_{0}^{b}\right \Vert _{L_{x}^{2}L_{\xi
}^{2}}^{1/2}\cdot \left \Vert \nabla _{x}\mathbb{G}_{S}^{t-\tau }\mathcal{T}%
\mathbb{G}_{L}^{\tau }f_{0}^{b}\right \Vert _{L_{x}^{2}L_{\xi }^{2}}^{1/2} \\
& \lesssim e^{-\frac{t-\tau }{c}}\left \Vert \mathcal{T}\nabla _{x}^{2}%
\mathbb{G}_{L}^{\tau }f_{0}^{b}\right \Vert _{L_{\xi
}^{2}L_{x}^{2}}^{1/2}\left \Vert \mathcal{T}\nabla _{x}\mathbb{G}_{L}^{\tau
}f_{0}^{b}\right \Vert _{L_{\xi }^{2}L_{x}^{2}}^{1/2} \\
& \lesssim e^{-\frac{t-\tau }{c}}\left( 1+\tau \right) ^{\frac{1}{2}(-\frac{3%
}{4}-\frac{2}{2})+\frac{1}{2}(-\frac{3}{4}-\frac{1}{2})}\mathcal{B}\left(
\left \Vert \frac{\sqrt{\mathcal{M}_{b}}}{\sqrt{\mathcal{M}_{a}}}%
f_{0L}^{b}\right \Vert _{L_{\xi ,\beta }^{\infty }L_{x}^{2}}+\left \Vert
f_{0L}^{b}\right \Vert _{L_{x}^{1}L_{\xi }^{2}}\right)
\end{align*}%
and so

\begin{equation}
\int_{\frac{t}{2}}^{t}\left \Vert \mathbb{G}_{S}^{t-\tau }\mathcal{T}\mathbb{%
G}_{L}^{\tau }f_{0}^{b}\right \Vert _{L_{x}^{\infty }L_{\xi }^{2}}d\tau
\lesssim \mathcal{B}(1+t)^{-3/2}\left( \left \Vert \frac{\sqrt{\mathcal{M}%
_{b}}}{\sqrt{\mathcal{M}_{a}}}f_{0L}^{b}\right \Vert _{L_{\xi ,\beta
}^{\infty }L_{x}^{2}}+\left \Vert f_{0L}^{b}\right \Vert _{L_{x}^{1}L_{\xi
}^{2}}\right) \text{.}  \label{GSL}
\end{equation}

Gathering $\left( \ref{GL-0-L}\right) $-$\left( \ref{GSL}\right) $, we obtain%
\begin{eqnarray*}
&&\left \Vert \chi _{112}\right \Vert _{L_{x}^{\infty }L_{\xi }^{2}} \\
&\lesssim &\varepsilon \left( 1+t\right) ^{-1}\mathcal{B}\left( \left \Vert
\frac{\sqrt{\mathcal{M}_{b}}}{\sqrt{\mathcal{M}_{a}}}f_{0L}^{b}\right \Vert
_{L_{\xi ,\beta }^{\infty }L_{x}^{2}}+\left \Vert f_{0L}^{b}\right \Vert
_{L_{x}^{1}L_{\xi }^{2}}+\left \Vert \frac{\sqrt{\mathcal{M}_{b}}}{\sqrt{%
\mathcal{M}_{a}}}f_{0S}^{b}\right \Vert _{L_{\xi ,\beta }^{\infty }\left(
L_{x}^{2}\cap L_{x}^{\infty }\right) }+\left \Vert f_{0}^{b}\right \Vert
_{L_{\xi }^{2}\left( L_{x}^{2}\cap L_{x}^{\infty }\right) }\right) \\
&\lesssim &\varepsilon \left( 1+t\right) ^{-1}\mathcal{B}\left( \left \Vert
\frac{\sqrt{\mathcal{M}_{b}}}{\sqrt{\mathcal{M}_{a}}}f_{0}^{b}\right \Vert
_{L_{\xi ,\beta }^{\infty }\left( L_{x}^{2}\cap L_{x}^{\infty }\right)
}+\left \Vert f_{0}^{b}\right \Vert _{L_{x}^{1}L_{\xi }^{2}}\right) \text{,}
\end{eqnarray*}%
since $\left \Vert f_{0L}^{b}\right \Vert _{L_{x}^{1}L_{\xi }^{2}}\lesssim
\left \Vert f_{0}^{b}\right \Vert _{L_{x}^{1}L_{\xi }^{2}}\,$and%
\begin{equation*}
\left \Vert \frac{\sqrt{\mathcal{M}_{b}}}{\sqrt{\mathcal{M}_{a}}}%
f_{0S}^{b}\right \Vert _{L_{\xi ,\beta }^{\infty }L_{x}^{\infty }}=\left
\Vert \frac{\sqrt{\mathcal{M}_{b}}}{\sqrt{\mathcal{M}_{a}}}\left(
f_{0}^{b}-f_{0L}^{b}\right) \right \Vert _{L_{\xi ,\beta }^{\infty
}L_{x}^{\infty }}\leq \left \Vert \frac{\sqrt{\mathcal{M}_{b}}}{\sqrt{%
\mathcal{M}_{a}}}f_{0}^{b}\right \Vert _{L_{\xi ,\beta }^{\infty
}L_{x}^{\infty }}+\left \Vert \frac{\sqrt{\mathcal{M}_{b}}}{\sqrt{\mathcal{M}%
_{a}}}f_{0L}^{b}\right \Vert _{L_{\xi ,\beta }^{\infty }L_{x}^{2}}\text{.}
\end{equation*}

Next we see $\left \Vert \chi _{111}\right \Vert _{L_{x}^{\infty }L_{\xi
}^{2}} $. In light of Theorem \ref{Thm for linear} and Proposition \ref{Prop-linear} and the fact that $%
\mathrm{P}_{0}\Gamma \left( \frac{\sqrt{\mathcal{M}_{b}}}{\sqrt{\mathcal{M}%
_{a}}}\mathbb{G}_{b}^{\tau }f_{0}^{b}\,,\frac{\mathcal{M}_{b}-\mathcal{M}_{a}%
}{\sqrt{\mathcal{M}_{a}}}\right) =0$,

\begin{eqnarray*}
\left \Vert \chi _{111}\right \Vert _{L_{x}^{\infty }L_{\xi }^{2}}
&=&\varepsilon \left \Vert \int_{0}^{\frac{t}{2}}\mathbb{G}^{t-\tau }\Gamma
\left( \frac{\sqrt{\mathcal{M}_{b}}}{\sqrt{\mathcal{M}_{a}}}\mathbb{G}%
_{b}^{\tau }f_{0}^{b}\,,\frac{\mathcal{M}_{b}-\mathcal{M}_{a}}{\sqrt{%
\mathcal{M}_{a}}}\right) d\tau \right \Vert _{L_{x}^{\infty }L_{\xi }^{2}} \\
&\lesssim &\varepsilon \mathcal{B}\int_{0}^{\frac{t}{2}}\left( 1+t-\tau
\right) ^{-\frac{5}{4}}\left( \left \Vert \nu \left( \xi \right) \frac{\sqrt{%
\mathcal{M}_{b}}}{\sqrt{\mathcal{M}_{a}}}\mathbb{G}_{b}^{\tau
}f_{0}^{b}\right \Vert _{L_{x}^{2}L_{\xi }^{2}}+\left \Vert \nu \left( \xi
\right) \frac{\sqrt{\mathcal{M}_{b}}}{\sqrt{\mathcal{M}_{a}}}\mathbb{G}%
_{b}^{\tau }f_{0}^{b}\right \Vert _{L_{\xi }^{2}L_{x}^{\infty }}\right) d\tau
\\
&\lesssim &\varepsilon \mathcal{B}\int_{0}^{\frac{t}{2}}\left( 1+t-\tau
\right) ^{-\frac{5}{4}}\left( \left \Vert \frac{\sqrt{\mathcal{M}_{b}}}{%
\sqrt{\mathcal{M}_{a}}}\mathbb{G}_{b}^{\tau }f_{0}^{b}\right \Vert _{L_{\xi
,\beta }^{\infty }L_{x}^{2}}+\left \Vert \frac{\sqrt{\mathcal{M}_{b}}}{\sqrt{%
\mathcal{M}_{a}}}\mathbb{G}_{b}^{\tau }f_{0}^{b}\right \Vert _{L_{\xi ,\beta
}^{\infty }L_{x}^{\infty }}\right) d\tau \\
&\lesssim &\varepsilon \mathcal{B}\int_{0}^{\frac{t}{2}}\left( 1+t-\tau
\right) ^{-\frac{5}{4}}\left( 1+\tau \right) ^{-\frac{3}{4}}d\tau \left(
\left \Vert \frac{\sqrt{\mathcal{M}_{b}}}{\sqrt{\mathcal{M}_{a}}}%
f_{0}^{b}\right \Vert _{L_{\xi ,\beta }^{\infty }L_{x}^{2}}+\left \Vert
\frac{\sqrt{\mathcal{M}_{b}}}{\sqrt{\mathcal{M}_{a}}}f_{0}^{b}\right \Vert
_{L_{\xi ,\beta }^{\infty }L_{x}^{\infty }}+\left \Vert f_{0}^{b}\right
\Vert _{L_{x}^{1}L_{\xi }^{2}}\right) \\
&\lesssim &\varepsilon \left( 1+t\right) ^{-1}\mathcal{B}\left( \left \Vert
\frac{\sqrt{\mathcal{M}_{b}}}{\sqrt{\mathcal{M}_{a}}}f_{0}^{b}\right \Vert
_{L_{\xi ,\beta }^{\infty }\left( L_{x}^{2}\cap L_{x}^{\infty }\right)
}+\left \Vert f_{0}^{b}\right \Vert _{L_{x}^{1}L_{\xi }^{2}}\right) \text{.}
\end{eqnarray*}%
Therefore,
\begin{equation}
\left \Vert \chi _{11}\right \Vert _{L_{x}^{\infty }L_{\xi }^{2}}=\left
\Vert \chi _{111}+\chi _{112}\right \Vert _{L_{x}^{\infty }L_{\xi
}^{2}}\lesssim \varepsilon \left( 1+t\right) ^{-1}\mathcal{B}\left( \left
\Vert \frac{\sqrt{\mathcal{M}_{b}}}{\sqrt{\mathcal{M}_{a}}}f_{0}^{b}\right
\Vert _{L_{\xi ,\beta }^{\infty }\left( L_{x}^{\infty }\cap L_{x}^{2}\right)
}+\left \Vert f_{0}^{b}\right \Vert _{L_{x}^{1}L_{\xi }^{2}}\right) \text{.}
\label{X11-Estimate}
\end{equation}%
Note that $\chi _{11}$ can be expressed by
\begin{equation*}
\chi _{11}=\int_{0}^{t}\mathbb{S}^{t-\tau }\left[ K\chi _{11}\left( \tau
\right) +\varepsilon \Gamma \left( \frac{\sqrt{\mathcal{M}_{b}}}{\sqrt{%
\mathcal{M}_{a}}}\mathbb{G}_{b}^{\tau }f_{0}^{b},\frac{\mathcal{M}_{b}-%
\mathcal{M}_{a}}{\sqrt{\mathcal{M}_{a}}}\right) \right] d\tau \text{.}
\end{equation*}%
By Proposition \ref{Prop-linear} and $\left( \ref{X11-Estimate}\right) $, we
have%
\begin{eqnarray*}
\left \Vert \chi _{11}\right \Vert _{L_{x}^{\infty }L_{\xi }^{\infty }}
&\lesssim &\int_{0}^{t}e^{-\frac{\left( t-\tau \right) }{c}}\left( \left
\Vert \chi _{11}\left( \tau \right) \right \Vert _{L_{x}^{\infty }L_{\xi
}^{2}}+\varepsilon \mathcal{B}\left \Vert \nu \left( \xi \right) \frac{\sqrt{%
\mathcal{M}_{b}}}{\sqrt{\mathcal{M}_{a}}}\mathbb{G}_{b}^{\tau
}f_{0}^{b}\right \Vert _{L_{x}^{\infty }L_{\xi }^{\infty }}\right) d\tau \\
&\lesssim &\varepsilon \left( 1+t\right) ^{-1}\mathcal{B}\left( \left \Vert
\frac{\sqrt{\mathcal{M}_{b}}}{\sqrt{\mathcal{M}_{a}}}f_{0}^{b}\right \Vert
_{L_{\xi ,\beta }^{\infty }\left( L_{x}^{\infty }\cap L_{x}^{2}\right)
}+\left \Vert f_{0}^{b}\right \Vert _{L_{x}^{1}L_{\xi }^{2}}\right) \text{,}%
\,
\end{eqnarray*}%
and then
\begin{equation}
\left \Vert \chi _{11}\right \Vert _{L_{x}^{\infty }L_{\xi ,\beta -\gamma
}^{\infty }}\lesssim \varepsilon \left( 1+t\right) ^{-1}\mathcal{B}\left(
\left \Vert \frac{\sqrt{\mathcal{M}_{b}}}{\sqrt{\mathcal{M}_{a}}}%
f_{0}^{b}\right \Vert _{L_{\xi ,\beta }^{\infty }\left( L_{x}^{\infty }\cap
L_{x}^{2}\right) }+\left \Vert f_{0}^{b}\right \Vert _{L_{x}^{1}L_{\xi
}^{2}}\right)  \label{X11-weighted}
\end{equation}%
via the bootstrap argument.

Finally, we consider $\left \Vert \chi _{12}\right \Vert _{L_{\xi ,\beta
-\gamma }^{\infty }L_{x}^{\infty }}$. Owing to Theorems \ref{prop: nonlinear}
and \ref{Thm for linear}, we have
\begin{eqnarray*}
&&\left \Vert \chi _{12}\right \Vert _{L_{\xi ,\beta -2\gamma }^{\infty
}L_{x}^{\infty }} \\
&\lesssim &\int_{0}^{t}\left( 1+t-\tau \right) ^{-\frac{5}{4}}\left \Vert
\Gamma \left( \frac{\sqrt{\mathcal{M}_{b}}}{\sqrt{\mathcal{M}_{a}}}%
\int_{0}^{\tau }\mathbb{G}_{b}^{\tau -s}\Gamma _{b}\left( f^{b},f^{b}\right)
ds,\frac{\mathcal{M}_{b}-\mathcal{M}_{a}}{\sqrt{\mathcal{M}_{a}}}\right)
\right \Vert _{L_{\xi ,\beta -2\gamma }^{\infty }\left( L_{x}^{2}\cap
L_{x}^{\infty }\right) }d\tau \\
&\lesssim &\mathcal{B}\int_{0}^{t}\left( 1+t-\tau \right) ^{-\frac{5}{4}%
}\int_{0}^{\tau }\left \Vert \frac{\sqrt{\mathcal{M}_{b}}}{\sqrt{\mathcal{M}%
_{a}}}\mathbb{G}_{b}^{\tau -s}\Gamma _{b}\left( f^{b},f^{b}\right) \right
\Vert _{L_{\xi ,\beta -\gamma }^{\infty }\left( L_{x}^{2}\cap L_{x}^{\infty
}\right) }dsd\tau \\
&\lesssim &\mathcal{B}\int_{0}^{t}\left( 1+t-\tau \right) ^{-\frac{5}{4}%
}\int_{0}^{\tau }\left( 1+\tau -s\right) ^{-\frac{5}{4}}\left \Vert \frac{%
\sqrt{\mathcal{M}_{b}}}{\sqrt{\mathcal{M}_{a}}}\Gamma _{b}\left(
f^{b},f^{b}\right) \right \Vert _{L_{\xi ,\beta -\gamma }^{\infty }\left(
L_{x}^{1}\cap L_{x}^{2}\cap L_{x}^{\infty }\right) }dsd\tau \\
&\lesssim &\varepsilon ^{2}\mathcal{B}\int_{0}^{t}\left( 1+t-\tau \right) ^{-%
\frac{5}{4}}\int_{0}^{\tau }\left( 1+\tau -s\right) ^{-\frac{5}{4}}\left(
1+s\right) ^{-\frac{3}{2}}dsd\tau \left \Vert \frac{\sqrt{\mathcal{M}_{b}}}{%
\sqrt{\mathcal{M}_{a}}}f_{0}^{b}\right \Vert _{L_{\xi ,\beta }^{\infty
}\left( L_{x}^{1}\cap L_{x}^{\infty }\right) }^{2} \\
&\lesssim &\varepsilon ^{2}\mathcal{B}\left( 1+t\right) ^{-\frac{5}{4}}\left
\Vert \frac{\sqrt{\mathcal{M}_{b}}}{\sqrt{\mathcal{M}_{a}}}f_{0}^{b}\right
\Vert _{L_{\xi ,\beta }^{\infty }\left( L_{x}^{1}\cap L_{x}^{\infty }\right)
}^{2}\text{.}\,
\end{eqnarray*}%
Note that
\begin{equation*}
\chi _{12}=\int_{0}^{t}\mathbb{S}^{t-\tau }\Gamma \left( \frac{\sqrt{%
\mathcal{M}_{b}}}{\sqrt{\mathcal{M}_{a}}}\int_{0}^{\tau }\mathbb{G}%
_{b}^{\tau -s}\Gamma _{b}\left( f^{b},f^{b}\right) ds\,,\frac{\mathcal{M}%
_{b}-\mathcal{M}_{a}}{\sqrt{\mathcal{M}_{a}}}\right) \left(\tau\right)d\tau +\int_{0}^{t}%
\mathbb{S}^{t-\tau}K\chi _{12}(\tau)d\tau\text{.}
\end{equation*}%
Hence in view of $\left( \ref{K-Lp}\right) $, Theorem \ref{prop: nonlinear}
and Proposition \ref{Prop-linear},
\begin{align*}
& \quad \left \vert \left \langle \xi \right \rangle ^{\beta -\gamma }\chi
_{12}\right \vert _{L_{x}^{\infty }} \\
& \lesssim \mathcal{B}\int_{0}^{t}e^{-\nu \left( \xi \right) \left( t-\tau
\right) }\nu \left( \xi \right) \int_{0}^{\tau }\left \Vert \frac{\sqrt{%
\mathcal{M}_{b}}}{\sqrt{\mathcal{M}_{a}}}\mathbb{G}_{b}^{\tau -s}\Gamma
_{b}\left( f^{b},f^{b}\right) \right \Vert _{L_{\xi ,\beta -\gamma }^{\infty
}L_{x}^{\infty }}dsd\tau \\
& \quad +\int_{0}^{t}e^{-\nu _{0}(t-\tau )}\left \Vert \chi _{12}\right
\Vert _{L_{\xi ,\beta -2\gamma }^{\infty }L_{x}^{\infty }}(\tau)d\tau \\
& \lesssim \varepsilon ^{2}\mathcal{B}\int_{0}^{t}e^{-\nu \left( \xi \right)
\left( t-\tau \right) }\nu \left( \xi \right) \int_{0}^{\tau }\left( 1+\tau
-s\right) ^{-5/4}\left( 1+s\right) ^{-9/4}dsd\tau \left \Vert \frac{\sqrt{%
\mathcal{M}_{b}}}{\sqrt{\mathcal{M}_{a}}}f_{0}^{b}\right \Vert _{L_{\xi
,\beta }^{\infty }\left( L_{x}^{1}\cap L_{x}^{\infty }\right) }^{2} \\
& \quad +\varepsilon ^{2}\mathcal{B}\int_{0}^{t}e^{-\nu _{0}(t-\tau
)}(1+\tau)^{-5/4}d\tau\left \Vert \frac{\sqrt{\mathcal{M}_{b}}}{\sqrt{\mathcal{M}%
_{a}}}f_{0}^{b}\right \Vert _{L_{\xi ,\beta }^{\infty }\left( L_{x}^{1}\cap
L_{x}^{\infty }\right) }^{2} \\
& \lesssim \varepsilon ^{2}\mathcal{B}(1+t)^{-5/4}\left \Vert \frac{\sqrt{%
\mathcal{M}_{b}}}{\sqrt{\mathcal{M}_{a}}}f_{0}^{b}\right \Vert _{L_{\xi
,\beta }^{\infty }\left( L_{x}^{1}\cap L_{x}^{\infty }\right) }^{2}\, \text{,%
}
\end{align*}%
so that
\begin{equation}
\left \Vert \chi _{12}\right \Vert _{L_{\xi ,\beta -\gamma }^{\infty
}L_{x}^{\infty }}\lesssim \varepsilon ^{2}\mathcal{B}(1+t)^{-5/4}\left \Vert
\frac{\sqrt{\mathcal{M}_{b}}}{\sqrt{\mathcal{M}_{a}}}f_{0}^{b}\right \Vert
_{L_{\xi ,\beta }^{\infty }\left( L_{x}^{1}\cap L_{x}^{\infty }\right) }^{2}%
\text{.}  \label{X12-weighted}
\end{equation}%
Combining $\left( \ref{X11-weighted}\right) $ and $\left( \ref{X12-weighted}%
\right) $, the proof of Proposition \ref{Prop-X1} is completed.$%
\hfill%
\square $

%
%
%
%
%
%
%
%
%
%
%
%
%
%
%
%
%
%
%
%
%
%
\appendix

\section{Heat equation}

\label{sec:heat}

\begin{theorem}
\label{heat}Let $\mu \in \mathbb{R}^{3}$, $1\leq \lambda \leq 2$. Assume
that $h^{a}$ and $h^{b}$ satisfy the heat equations in the whole space $%
\mathbb{R}^{3}$, i.e.,%
\begin{equation*}
\partial _{t}h^{a}=\Delta h^{a}\text{,}
\end{equation*}%
and
\begin{equation*}
\partial _{t}h^{b}+\mu \cdot \nabla h^{b}=\lambda ^{\left( 2-\gamma \right)
/2}\Delta h^{b}\text{,}
\end{equation*}%
with initial data $h_{0}^{a}=h_{0}^{b}=h_{0}\in L_{x}^{1}\left( \mathbb{R}%
^{3}\right) $. Then there exists a constant $C>0$ independent of time such
that
\begin{equation*}
\left \vert h^{b}-h^{a}\right \vert _{L_{x}^{\infty }}\leq
C(1+t)^{-1}\left \vert h_{0}\right \vert _{L_{x}^{1}}\left( \left \vert \lambda
-1\right \vert (1+t)^{-1/2}+\left \vert \mu \right \vert \right) \text{,}
\end{equation*}%
\begin{equation*}
\left \vert h^{b}-h^{a}\right \vert _{L_{x}^{2}}\leq C(1+t)^{-1/4}\left \vert
h_{0}\right \vert _{L_{x}^{1}}\left( \left \vert \lambda -1\right \vert
(1+t)^{-1/2}+\left \vert \mu \right \vert \right) \text{,}
\end{equation*}%
for $t\geq 1$.
\end{theorem}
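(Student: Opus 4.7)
The plan is to exploit the explicit Gaussian representations of $h^{a}$ and $h^{b}$. Let
\[
G(t,x;m,\ka)=(4\pi\ka t)^{-3/2}\exp\!\left(-\frac{|x-mt|^{2}}{4\ka t}\right)
\]
be the fundamental solution of $\pa_{t}u+m\cdot\na_{x}u=\ka\De u$, and set $\ka=\la^{(2-\ga)/2}$. Then $h^{a}=G(t,\cdot\,;0,1)*h_{0}$ and $h^{b}=G(t,\cdot\,;\mu,\ka)*h_{0}$, so Young's convolution inequality gives
\[
|h^{b}-h^{a}|_{L_{x}^{q}}\leq|G(t,\cdot\,;\mu,\ka)-G(t,\cdot\,;0,1)|_{L_{x}^{q}}\,|h_{0}|_{L_{x}^{1}},\qquad q=2,\infty,
\]
and it suffices to bound the kernel difference in $L_{x}^{\infty}$ and $L_{x}^{2}$.

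To do this I would interpolate along the one-parameter family $G_{s}:=G(t,\cdot\,;s\mu,1+s(\ka-1))$, $s\in[0,1]$, joining the two kernels, and apply the fundamental theorem of calculus:
\[
G(t,\cdot\,;\mu,\ka)-G(t,\cdot\,;0,1)=\int_{0}^{1}\!\Bigl[\mu\cdot\na_{m}G+(\ka-1)\pa_{\ka}G\Bigr]_{(s\mu,\,1+s(\ka-1))}\,ds.
\]
Explicit differentiation yields $\na_{m}G=\frac{x-mt}{2\ka}\,G$ and $\pa_{\ka}G=\Bigl(-\frac{3}{2\ka}+\frac{|x-mt|^{2}}{4\ka^{2}t}\Bigr)G$, so the integrand is a Gaussian multiplied by a low-order polynomial in $x-mt$.

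The remaining work is Gaussian calculus. Changing variables $y=x-mt$ and using the moments of the centered Gaussian with variance $\ka t$ on $\R^{3}$, one verifies, uniformly in $s$ (since $1+s(\ka-1)\in[1,\ka]\subset[1,2]$) and for $t\geq 1$,
\[
|\mu\cdot\na_{m}G|_{L_{x}^{\infty}}\lesssim|\mu|\,t^{-1},\qquad|(\ka-1)\pa_{\ka}G|_{L_{x}^{\infty}}\lesssim|\ka-1|\,t^{-3/2},
\]
\[
|\mu\cdot\na_{m}G|_{L_{x}^{2}}\lesssim|\mu|\,t^{-1/4},\qquad|(\ka-1)\pa_{\ka}G|_{L_{x}^{2}}\lesssim|\ka-1|\,t^{-3/4}.
\]
Because $\ka-1=\la^{(2-\ga)/2}-1$ is comparable to $|\la-1|$ on $\la\in[1,2]$, integrating in $s$ and convolving against $|h_{0}|_{L_{x}^{1}}$ reproduces the two bounds in the theorem, the factor $(1+t)^{-1/2}$ in front of $|\la-1|$ reflecting the gap between the drift and diffusion decay rates.

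The argument is essentially a careful bookkeeping exercise. The one subtle point is that the pre-factors $|y|$ and $|y|^{2}/t$ saturate at $|y|\sim\sqrt{t}$ under the Gaussian, which is what produces the different pointwise decays $t^{-1}$ and $t^{-3/2}$ for the drift and diffusion contributions and, correspondingly, $t^{-1/4}$ and $t^{-3/4}$ in $L^{2}$. The hypothesis $t\geq 1$ is used only to replace $t$ by $(1+t)$ and to absorb lower-order constants.
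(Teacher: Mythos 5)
Your proposal is correct and follows essentially the same route as the paper's first proof: the paper's Lemma \ref{Lemm-H} is exactly your fundamental-theorem-of-calculus interpolation between the two Gaussian kernels (written there via the mean value theorem in the parameter $\theta$), followed by absorbing the polynomial factors into the Gaussian and applying Young's convolution inequality with $|\kappa-1|\leq|\lambda-1|$. The paper additionally records a second, Duhamel-based proof, but your argument matches its primary one.
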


To simplify the notation, we set $\kappa =\lambda ^{\left( 2-\gamma \right)
/2}$. We will provide two different methods to prove the theorem. The first
method is to study the difference of the two solutions. In view of the exact
solution formula associated with the heat kernel, we have
\begin{equation*}
h^{a}\left( t,x\right) =\int_{\mathbb{R}^{3}}\frac{1}{\left( 4\pi t\right)
^{3/2}}e^{-\frac{\left \vert x-y\right \vert ^{2}}{4t}}h_{0}\left( y\right)
dy\text{,}
\end{equation*}%
\begin{equation*}
h^{b}\left( t,x\right) =\int_{\mathbb{R}^{3}}\frac{1}{(4\pi \kappa t)^{3/2}}%
e^{-\frac{\left \vert x-\mu t-y\right \vert ^{2}}{4\kappa t}}h_{0}\left(
y\right) dy\text{,}
\end{equation*}%
so that
\begin{equation*}
h^{a}\left( t,x\right) -h^{b}\left( t,x\right) =\int_{\mathbb{R}^{3}}\left[
\frac{1}{\left( 4\pi t\right) ^{3/2}}e^{-\frac{\left \vert x-y\right \vert
^{2}}{4t}}-\frac{1}{(4\pi \kappa t)^{3/2}}e^{-\frac{\left \vert x-\mu
t-y\right \vert ^{2}}{4\kappa t}}\right] h_{0}\left( y\right) dy\text{.}
\end{equation*}

To proceed, we need the following lemma:

\begin{lemma}
\label{Lemm-H}For $\mu \in \mathbb{R}^{3}$, $1\leq \lambda \leq 2$, $t>0$,%
\begin{equation*}
\left \vert \frac{e^{-\frac{\left \vert x\right \vert ^{2}}{4t}}}{(4\pi
t)^{3/2}}-\frac{e^{-\frac{\left \vert x-\mu t\right \vert ^{2}}{4\ka t}}}{(4\pi \kappa t)^{3/2}}\right \vert _{L_{x}^{p}}\leq
Ct^{-\frac{3}{2}(1-\frac{1}{p})}\left[ \lvert \kappa -1\rvert +\lvert \mu
\rvert \sqrt{t}\right] \text{, }1\leq p\leq \infty \text{\thinspace ,}
\end{equation*}%
for some constant $C>0$ independent of $\lambda $, $u$, $p$.
\end{lemma}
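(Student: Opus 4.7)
The plan is to interpolate between the two Gaussian kernels along a one-parameter family and control the $\theta$-derivative uniformly, then apply the fundamental theorem of calculus in $\theta$. I would first split the quantity to be estimated as
$$G_1 - G_3 = (G_1 - G_2) + (G_2 - G_3),$$
where
$$G_1 = (4\pi t)^{-3/2}e^{-|x|^2/(4t)},\quad G_2 = (4\pi \kappa t)^{-3/2}e^{-|x|^2/(4\kappa t)},\quad G_3 = (4\pi \kappa t)^{-3/2}e^{-|x-\mu t|^2/(4\kappa t)}.$$
This separates the effect of the diffusion coefficient from that of the drift, so the two pieces can be treated by entirely different one-parameter deformations.

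For the drift piece $G_2 - G_3$, note that $G_3(t,\cdot) = G_2(t,\cdot - \mu t)$. Setting $\widetilde{G}(\theta,t,x) = G_2(t, x - \theta \mu t)$, one has $\partial_\theta \widetilde{G} = -(\mu t)\cdot \nabla_x G_2(t, x - \theta \mu t)$, so translation-invariance of the $L_x^p$ norm gives $|G_3 - G_2|_{L_x^p} \leq |\mu|\, t\, |\nabla G_2|_{L_x^p}$. A direct scaling computation (substituting $y = x/\sqrt{\kappa t}$) yields $|\nabla G_2|_{L_x^p} \lesssim t^{-1/2}(\kappa t)^{-\frac{3}{2}(1 - 1/p)}$, and since $\kappa$ stays bounded this produces the contribution $|\mu|\sqrt{t}\cdot t^{-\frac{3}{2}(1 - 1/p)}$ to the right-hand side.

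For the diffusion piece $G_1 - G_2$, I would interpolate by $\kappa(\theta) = 1 + \theta(\kappa-1)$ and set
$$G(\theta, t, x) = (4\pi \kappa(\theta) t)^{-3/2}\exp\bigl(-|x|^2/(4\kappa(\theta)t)\bigr).$$
A direct differentiation gives
$$\partial_\theta G = (\kappa - 1)\Bigl[-\frac{3}{2\kappa(\theta)} + \frac{|x|^2}{4\kappa(\theta)^2 t}\Bigr]G.$$
After rescaling $y = x/\sqrt{\kappa(\theta) t}$, the bracket multiplied by $G$ becomes a fixed Schwartz function of $y$ multiplied by $(\kappa(\theta) t)^{-3/2}$, so the $L_x^p$ norm of $\partial_\theta G$ is $\lesssim |\kappa-1|\, t^{-\frac{3}{2}(1 - 1/p)}$. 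Integrating in $\theta \in [0,1]$ and summing the two pieces then yields the stated estimate.

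The only subtle point is uniformity of constants: the hypothesis $1 \leq \lambda \leq 2$ combined with $\kappa = \lambda^{(2-\gamma)/2}$ and $\gamma \in [0,1]$ forces $\kappa \in [1, 2]$, and therefore $\kappa(\theta) \in [1,2]$ for all $\theta \in [0,1]$, so all the scaling factors that appear in the change of variables are uniformly controlled. There is no substantial obstacle; the argument is essentially a parameter-differentiation refinement of the standard heat-kernel $L^p$ scaling, and the endpoints $p = 1$ and $p = \infty$ both work without modification since the same Schwartz bound controls both regimes.
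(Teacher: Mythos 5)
Your proposal is correct and rests on essentially the same argument as the paper: a one-parameter deformation of the Gaussian kernel, differentiation in the parameter, and absorption of the resulting polynomial factors into the Gaussian together with the standard $L^p$ scaling $\lvert (4\pi s)^{-3/2}e^{-|x|^2/(4s)}\rvert_{L_x^p}\lesssim s^{-\frac{3}{2}(1-\frac{1}{p})}$, with $\kappa\in[1,2]$ guaranteeing uniform constants. The only (cosmetic) difference is that you split the difference by the triangle inequality and run two separate homotopies — diffusion coefficient first, then the drift handled by translation invariance and a gradient bound — whereas the paper deforms center and variance simultaneously via $\theta\mapsto\bigl(\theta\mu t,\,1+\theta(\kappa-1)\bigr)$ and estimates the single $\theta$-derivative, which contains both the $|\kappa-1|$ and the $|\mu|\sqrt{t}$ contributions at once.
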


\begin{proof}
By mean value theorem
\begin{multline*}
\frac{e^{-\frac{\left \vert x-\mu t\right \vert ^{2}}{4\kappa t}}}{(4\pi
\kappa t)^{3/2}}-\frac{e^{-\frac{\left \vert x\right \vert ^{2}}{4t}}}{(4\pi
t)^{3/2}}=\left[ \frac{e^{-\frac{\lvert x-\theta \mu t\rvert ^{2}}{%
4(1+(\kappa -1)\theta )t}}}{(4\pi (1+(\kappa -1)\theta )t)^{3/2}}\right]
_{\theta =0}^{\theta =1} \\
=\int_{0}^{1}\frac{e^{-\frac{\lvert x-\theta \mu t\rvert ^{2}}{4(1+(\kappa
-1)\theta )t}}}{(4\pi (1+(\kappa -1)\theta )t)^{3/2}}\left[ -\frac{3(\kappa
-1)}{2(\theta (\kappa -1)+1)}+\frac{\mu \cdot (x-\theta \mu t)}{2(\theta
(\kappa -1)+1)}+\frac{(\kappa -1)\left \vert x-\theta \mu t\right \vert ^{2}%
}{4t(\theta (\kappa -1)+1)^{2}}\right] d\theta \text{.}
\end{multline*}%
It then immediately follows that for $1\leq p\leq \infty $,
\begin{eqnarray*}
&&\left \vert \frac{e^{-\frac{\left \vert x-\mu t\right \vert ^{2}}{4\kappa t%
}}}{(4\pi \kappa t)^{3/2}}-\frac{e^{-\frac{\left \vert x\right \vert ^{2}}{4t%
}}}{(4\pi t)^{3/2}}\right \vert _{L_{x}^{p}} \\
&\leq &\int_{0}^{1}\left \vert \frac{e^{-\frac{\lvert x-\theta \mu t\rvert
^{2}}{4(1+(\kappa -1)\theta )t}}}{(4\pi (1+(\kappa -1)\theta )t)^{3/2}}\left[
-\frac{3(\kappa -1)}{2(\theta (\kappa -1)+1)}+\frac{\mu \cdot (x-\theta \mu
t)}{2(\theta (\kappa -1)+1)}+\frac{(\kappa -1)\left \vert x-\theta \mu
t\right \vert ^{2}}{4t(\theta (\kappa -1)+1)^{2}}\right] \right \vert
_{L_{x}^{p}}d\theta \\
&\lesssim &\int_{0}^{1}\left \vert \frac{e^{-\frac{\left \vert x-\theta \mu
t\right \vert ^{2}}{Ct}}}{t^{3/2}}\left[ \lvert \kappa -1\rvert +\lvert \mu
\rvert \sqrt{t}\right] \right \vert _{L_{x}^{p}}d\theta \lesssim t^{-\frac{3%
}{2}(1-\frac{1}{p})}\left[ \lvert \kappa -1\rvert +\lvert \mu \rvert \sqrt{t}%
\right] \text{.}
\end{eqnarray*}%
The polynomial $x-\theta \mu t$ is absorbed by exponential function in the
second inequality.
\end{proof}

From the Young's inequality for convolution, together with Lemma \ref{Lemm-H}%
, it follows that
\begin{align*}
\lvert h^{b}-h^{a}\rvert _{L_{x}^{r}}& \leq \lvert \frac{e^{-\frac{%
\left \vert x-\mu t\right \vert ^{2}}{4\kappa t}}}{(4\pi \kappa t)^{3/2}}-%
\frac{e^{-\frac{\left \vert x\right \vert ^{2}}{4t}}}{(4\pi t)^{3/2}}\rvert
_{L_{x}^{p}}\lvert h_{0}\rvert _{L_{x}^{q}} \\
& \leq Ct^{-\frac{3}{2}(1-\frac{1}{p})}\left[ \lvert \kappa -1\rvert +\lvert
\mu \rvert \sqrt{t}\right] \lvert h_{0}\rvert _{L_{x}^{q}} \\
& \leq Ct^{-\frac{3}{2}(\frac{1}{q}-\frac{1}{r})}\left[ \lvert \kappa
-1\rvert +\lvert \mu \rvert \sqrt{t}\right] \lvert h_{0}\rvert _{L_{x}^{q}}%
\text{,}
\end{align*}%
where $\frac{1}{p}+\frac{1}{q}=1+\frac{1}{r}$, $p$, $q$, $r\geq 1$.
Therefore, taking $q=1$, $r=\infty $ and $q=1$, $r=2$, respectively, gives
\begin{equation*}
\left \vert h^{b}-h^{a}\right \vert _{L_{x}^{\infty }}\leq C\left \vert
h_{0}\right \vert _{L_{x}^{1}}\left( \left \vert \kappa -1\right \vert
t^{-3/2}+\left \vert \mu \right \vert t^{-1}\right) \text{,}
\end{equation*}%
\begin{equation*}
\left \vert h^{b}-h^{a}\right \vert _{L_{x}^{2}}\leq C\left \vert
h_{0}\right \vert _{L_{x}^{1}}\left( \left \vert \kappa -1\right \vert
t^{-3/4}+\left \vert \mu \right \vert t^{-1/4}\right) \text{.}
\end{equation*}%
Noting that $\left \vert \kappa -1\right \vert \leq \left \vert \lambda
-1\right \vert $, the proof of Theorem \ref{heat} is completed.

Next, we provide an alternative proof for the difference. Let $h=h^{a}-h^{b}$%
. Then $h$ satisfies the equation%
\begin{equation*}
\left \{
\begin{array}{l}
\partial _{t}h=\Delta h+\mu \cdot \nabla h^{b}-\left( \kappa -1\right)
\Delta h^{b}\,,
\vspace {3mm}
\\
h\left( 0,x\right) =0\text{,}%
\end{array}%
\right.
\end{equation*}%
and it is given by
\begin{eqnarray*}
h\left( t,x\right) &=&\int_{0}^{t}\int_{\mathbb{R}^{3}}\frac{1}{\left[ 4\pi
\left( t-s\right) \right] ^{3/2}}e^{-\frac{\left \vert x-y\right \vert ^{2}}{%
4\left( t-s\right) }}\left[ \mu \cdot \nabla h^{b}\left( s,y\right) -\left(
\kappa -1\right) \Delta h^{b}\left( s,y\right) \right] dyds%
\vspace {3mm}
\\
&=&\int_{0}^{\frac{t}{2}}\int_{\mathbb{R}^{3}}\frac{1}{\left[ 4\pi \left(
t-s\right) \right] ^{3/2}}e^{-\frac{\left \vert x-y\right \vert ^{2}}{%
4\left( t-s\right) }}\left[ \mu \cdot \nabla h^{b}\left( s,y\right) -\left(
\kappa -1\right) \Delta h^{b}\left( s,y\right) \right] dyds \\
&&+\int_{\frac{t}{2}}^{t}\int_{\mathbb{R}^{3}}\frac{1}{\left[ 4\pi \left(
t-s\right) \right] ^{3/2}}e^{-\frac{\left \vert x-y\right \vert ^{2}}{%
4\left( t-s\right) }}\left[ \mu \cdot \nabla h^{b}\left( s,y\right) -\left(
\kappa -1\right) \Delta h^{b}\left( s,y\right) \right] dyds \\
&\equiv &h_{1}\left( t,x\right) +h_{2}\left( t,x\right) \text{.}\,
\end{eqnarray*}%
Recall the fact that
\begin{equation*}
\left \vert \partial _{x}^{\alpha }h^{b}\right \vert _{L_{x}^{q}}\leq
C\left( 1+t\right) ^{-\frac{3}{2}\left( 1-\frac{1}{q}\right) -\frac{\left
\vert \alpha \right \vert }{2}}\left \vert f_{0}\right \vert _{L_{x}^{1}}%
\text{, }1\leq q\leq \infty \text{,}
\end{equation*}%
for some constant $C>0$, 
where $\alpha =\left( \alpha _{1},\alpha _{2},\alpha _{3}\right) $ is a
multi-index. Thus, the integration by parts gives
\begin{eqnarray*}
h_{1}\left( t,x\right) &=&\int_{0}^{\frac{t}{2}}\int_{\mathbb{R}^{3}}\frac{1%
}{\left[ 4\pi \left( t-s\right) \right] ^{3/2}}e^{-\frac{\left \vert
x-y\right \vert ^{2}}{4\left( t-s\right) }}\left[ \mu \cdot \nabla
h^{b}\left( s,y\right) -\left( \kappa -1\right) \Delta h^{b}\left(
s,y\right) \right] dyds \\
&=&\int_{0}^{\frac{t}{2}}\int_{\mathbb{R}^{3}}\frac{1}{\left[ 4\pi \left(
t-s\right) \right] ^{3/2}}e^{-\frac{\left \vert x-y\right \vert ^{2}}{%
4\left( t-s\right) }}\frac{-\left( x-y\right) }{2\left( t-s\right) }\cdot %
\left[ \mu h^{b}\left( s,y\right) -\left( \kappa -1\right) \nabla
h^{b}\left( s,y\right) \right] dyds\text{,}
\end{eqnarray*}%
and for $t\geq 1$,%
\begin{eqnarray*}
\left \vert h_{1}\right \vert _{L_{x}^{r}} &\leq &C\left \vert f_{0}\right
\vert _{L_{x}^{1}}\int_{0}^{\frac{t}{2}}\left( t-s\right) ^{-\frac{3}{2}%
\left( 1-\frac{1}{p}\right) -\frac{1}{2}}\left( \left \vert \mu \right \vert
\left( 1+s\right) ^{-\frac{3}{2}\left( 1-\frac{1}{q}\right) }+\left \vert
\kappa -1\right \vert \left( 1+s\right) ^{-\frac{3}{2}\left( 1-\frac{1}{q}%
\right) -\frac{1}{2}}\right) ds \\
&\lesssim &\left \vert \mu \right \vert \left( 1+t\right) ^{-1+\frac{3}{2r}%
}+\left \vert \kappa -1\right \vert \left( 1+t\right) ^{-\frac{3}{2}+\frac{3%
}{2r}}
\end{eqnarray*}%
by the Young convolution inequality with $\frac{1}{p}+\frac{1}{q}=1+\frac{1}{%
r}$ and $-\frac{3}{2}\left( 1-\frac{1}{q}\right) +\frac{1}{2}>0$. With the
same $r$,
\begin{eqnarray*}
\left \vert h_{2}\right \vert _{L_{x}^{r}} &\leq &\int_{\frac{t}{2}%
}^{t}\left \vert \frac{1}{\left[ 4\pi \left( t-s\right) \right] ^{3/2}}e^{-%
\frac{\left \vert x\right \vert ^{2}}{4\left( t-s\right) }}\right \vert
_{L_{x}^{p}}\left \vert \mu \cdot \nabla h^{b}-\left( \kappa -1\right)
\Delta h^{b}\right \vert _{L_{x}^{q}}ds \\
&\leq &C\left \vert h_{0}\right \vert _{L_{x}^{1}}\int_{\frac{t}{2}%
}^{t}\left( t-s\right) ^{-\frac{3}{2}\left( 1-\frac{1}{p}\right) }\left(
\left \vert \mu \right \vert \left( 1+s\right) ^{-\frac{3}{2}\left( 1-\frac{1%
}{q}\right) -\frac{1}{2}}+\left \vert \kappa -1\right \vert \left(
1+s\right) ^{-\frac{3}{2}\left( 1-\frac{1}{q}\right) -1}\right) ds \\
&\lesssim &\left \vert \mu \right \vert \left( 1+t\right) ^{-1+\frac{3}{2r}%
}+\left \vert \kappa -1\right \vert \left( 1+t\right) ^{-\frac{3}{2}+\frac{3%
}{2r}}
\end{eqnarray*}%
for $t\geq 1$, by the Young convolution inequality with $\frac{1}{p}+\frac{1%
}{q}=1+\frac{1}{r}$, $-\frac{3}{2}\left( 1-\frac{1}{p}\right) +1>0$. Hence,
for $1\leq r\leq \infty $,%
\begin{equation*}
\left \vert h\right \vert _{L_{x}^{r}}\leq C\left( \left \vert \mu \right
\vert \left( 1+t\right) ^{-1+\frac{3}{2r}}+\left \vert \kappa -1\right \vert
\left( 1+t\right) ^{-\frac{3}{2}+\frac{3}{2r}}\right) \text{, }t\geq 1\text{,%
}
\end{equation*}%
where $C>0$ is a constant independent\ of $\lambda $, $u$ and $r$. In
particular,
\begin{equation*}
\left \vert h\right \vert _{L_{x}^{\infty }}\leq C\left( \left \vert \mu
\right \vert \left( 1+t\right) ^{-1}+\left \vert \kappa -1\right \vert
\left( 1+t\right) ^{-\frac{3}{2}}\right) \text{,}
\end{equation*}%
\begin{equation*}
\left \vert h\right \vert _{L_{x}^{2}}\leq C\left( \left \vert \mu \right
\vert \left( 1+t\right) ^{-\frac{1}{4}}+\left \vert \kappa -1\right \vert
\left( 1+t\right) ^{-\frac{3}{4}}\right) \text{,}
\end{equation*}%
for $t\geq 1$.

\textbf{Acknowledgments:} This work is partially supported by the National Key R\&D Program of
China, Project 2022YFA1000087. Y.-C. Lin is supported by the National Science and Technology
Council under the grant NSTC 110-2115-M-006-002-MY2. H.T. Wang is supported by NSFC
under Grant No. 12031013 and 12161141004, the Strategic Priority Research Program of Chinese
Academy of Sciences under Grant No. XDA25010403. K.-C. Wu is supported by the National
Science and Technology Council under the grant NSTC 112-2636-M-006 -001 and National Center
for Theoretical Sciences.
\newline
\newline

\end{document}